\newcommand{\diam}{\text{{\rm diam}}}
\newcommand{\trace}{\text{trace}}
\newcommand{\R}{{\mathbb R}} 
\newcommand{\N}{{\mathbb N}}
\newcommand{\p}{{\partial}}
\renewcommand{\a}{{\alpha}}
\renewcommand{\l}{{\lambda}}
\renewcommand{\L}{{\Lambda}}
\renewcommand{\O}{\Omega}
\renewcommand{\r}{{\rho}}
\newcommand{\G}{{\Gamma}}
\newcommand{\e}{{\varepsilon}}
\newtheorem{theorem}{Theorem}[section]
\newtheorem{corollary}[theorem]{Corollary}
\newtheorem{lemma}[theorem]{Lemma}
\newtheorem{proposition}[theorem]{Proposition}
\newtheorem{definition}[theorem]{Definition}
\newtheorem{remark}[theorem]{Remark}
\begin{document}

\title[Harnack Inequality for a Subelliptic PDE in nondivergence form ]{Harnack Inequality for a Subelliptic PDE \\ in nondivergence form }
\author{
Annamaria Montanari
}
\address{Dipartimento di Matematica, Universit\`{a} di Bologna, Piazza di Porta S. Donato $5$, $40126$ Bologna, Italy}

\email{annamaria.montanari@unibo.it}

\begin{abstract}
We consider subelliptic equations in non divergence form of the type
$$
Lu = \sum_{i\leq j}a_{ij}X_jX_iu=0
$$
where $X_j$ are the Grushin vector fields, and the matrix coefficient is uniformly elliptic.
We obtain a scale invariant Harnack's inequality on the $X_j$'s CC balls for nonnegative solutions under the only
 assumption that the ratio between the maximum and minimum eigenvalues of the coefficient matrix is bounded.
  In the paper we first prove a weighted Aleksandrov Bakelman Pucci estimate, and then we show a critical density estimate, the
  double ball property and the power decay property. Once this is established, Harnack's inequality follows directly from
  the axiomatic theory developed by Di Fazio, Gutierrez and Lanconelli  in \cite{DGL}.
  \bigskip
  
  {\it
  \noindent
 Mathematics Subject Classification.} 35J70; 35R05.
  
  \noindent
{\it Key words and phrases.} Non divergence subelliptic PDE's with measurable coefficients,  Grushin vector fields,
weighted Aleksandrov Bakelman Pucci estimate, Carnot Carath\'eo\-do\-ry metric,
critical density, double ball property, power decay property, invariant Harnack's inequality.
\end{abstract}

\maketitle

\tableofcontents

\section{Introduction}

Let $x=(x_1,x_2)\in \R^2$ and consider the vector fields
\begin{equation}\label{grushin}
X_1 =\p_{x_1}, \quad X_2=x_1\p_{x_2}.
\end{equation}
We define the second order partial differential operator
\begin{equation}\label{L}
L=a_{11}X_1^2+2a_{12}X_2X_1+a_{22}X_2^2
\end{equation}
 and we assume there are $\l,\L>0$  such that for all $\xi=(\xi_1,\xi_2)\in \R^2$
\begin{equation}\label{ell}
\l (\xi_1^2+\xi_2^2)\leq a_{11}\xi_1^2+2a_{12}\xi_1\xi_2+a_{22}\xi_2^2\leq \L (\xi_1^2+\xi_2^2).
\end{equation}
The positive constants $\l, \L$ are called ellipticity constants with respect to the couple $(X_1,X_2).$
The second order operator $L$ is degenerate elliptic and in non divergence form with bounded coefficients and it is a prototype of
subelliptic pdo's, because $[X_1,X_2]=\p_{x_2}.$\\

Our motivation to study the operator $L$ in \eqref{L} comes from the 
geometric theory of several complex variable, where nonlinear second order Partial Differential Equations of ``degenerate elliptic''- type appear. 
In particular, in looking for a characterization property of domains of holomorphy in term of a differential property of the boundary (pseudoconvexity), 
one has to handle the Levi curvatures equations, which are 
fully nonlinear equations in non-divergence form  (see e.g.
\cite{LM2013}, \cite{M2013}).
The existence theory for viscosity solutions to such equations is quite well settled down, mainly thanks to the papers \cite{DM}, \cite{ST}. On the contrary, the problem of the regularity is well understood only in $\R^3$ (see \cite{CLM}) and it is still widely open in higher dimension. This is mainly due to the lack of pointwise estimates for solutions to linear sub-elliptic equations with rough coefficients. Very recently, in a joint work with Cristian Gutierrez and Ermanno Lanconelli \cite{GLM}, we recognize
 that these equations in cylindrical coordinates are non divergence pde's $Lu=f,$ with $L$ structured as in \eqref{L}.

The purpose of this paper is to establish a scale invariant Harnack's inequality on balls $B$  of the  Carnot Carath\'eodory (in brief $CC$) distance given by the vector fields in \eqref{grushin}.
Precisely, we will show that for all nonnegative solutions $u$ to $Lu=0$
$$\sup_{B} u \leq C \inf_{B}  u,$$ with a positive constant $C$ depending only on $\l$ and $\L.$

When $L$ is a standard uniformly elliptic operator, this is the celebrated Harnack's inequality of Krylov and Safonov and its proof depends in a crucial way upon the maximum principle of Aleksandrov Bakelman Pucci (in brief ABP),
see \cite[Section 9.8] {GT}, \cite [Theorem 2.1.1]{G} and \cite[Section 3]{CC}.

It is not known if such a principle holds true in a general subelliptic
context in a form  such us \cite[Theorem 2.1.1]{G}. In
\cite{GM} we proved  a maximum principles of these type on the Heisenberg
group, but with a reminder.
Moreover, in \cite{DGN} it is proved that the ABP maximum principle fails in the space of functions with second order horizontal derivatives in $L^p$
for subcritical $p,$ i.e. $0<p<Q,$ where $Q$ the homogeneous dimension.
Roughly speaking, a key problem  in the subelliptic case is that the homogeneous dimension is always strictly greater than the number of the vector fields generating $\R^n.$
 Our main idea to overcome this obstacle is to introduce a weight in the Lebesgue measure, which allows us to handle subcritical $L^p$ norms.
 
 In this paper  we  prove that a weighted
ABP maximum principle holds in our context (see Theorem \ref{ABP}).
This is mainly due to the particular structure of the symmetric
matrix
\begin{equation}\label{matrix}
\left(\begin{array}{ccc}X_1^2u & X_2X_1u\\ X_2X_1u & X_2^2u\end{array}\right),
\end{equation}
whose determinant is the determinant of the real Hessian matrix of $u$ times a weight (see \eqref{Hessian}).
We then follow the classical proof of the ABP in \cite{CC}, but by measuring the right hand side with a weighted measure.

Structural theorems for the  CC balls then allow us to build ad hoc barriers for the geometry of the problem to get a critical density estimate.
Moreover, the double ball property is obtained by performing an idea of Giulio Tralli in \cite{T}.

\begin{definition}[Double Ball Property] \label{tralli}
Let $r>0$ and $y\in \R^2$
and define the set of functions
\[
\begin{split}
K=& \{
u\in C^2(B(y,3r))\cap C(\overline{B(y,3r)}):    u\geq 0 \, \textrm{and} \, Lu\leq 0 \, \textrm{in}\, B(y,3r),\, u\geq 1\,  \textrm{on} \, B(y,r)
\}.
\end{split}
\]

We say that $L$ satisfies the Double Ball Property in  $B(y,3r)$ if there exists a constant   $\gamma>0,$  only depending  on the ellipticity constants $\l, \L ,$ such that
$u\geq \gamma$ in $B(y, 2r)$
for all $u \in K.$
\end{definition}

In \cite{GuT} Gutierrez and Tournier proved this property for elliptic equations on the Heisenberg group $\mathbb{H}^1$. In \cite{T} Tralli proved
 that it holds true for a general Carnot group of step two. He first recognized that, via the weak Maximum Principle, the double ball property is a consequence of a kind of solvability of the Dirichlet problem in the exterior of any homogeneous ball. His main tools are the structure of two-dimensional non-abelian nilpotent Lie groups and
 the existence of suitable local barrier functions in the interior of
 the Gauge ball at any boundary point.
  
  Unfortunately, in our situation there exist no two-dimensional non-abelian nilpotent Lie groups associated to the vector fields in \eqref{grushin}
  (see for instance \cite{RS}). However, in Section \ref{dbp} we perform Tralli's idea and 
   we prove the Double Ball property for the operator $L$  as a consequence of a very general Ring Theorem (see Theorem \ref{rt}).

 Once the critical density, the double ball property and the power decay property are proved, Harnack's inequality follows directly from the theory developed by
Di Fazio, Gutierrez and Lanconelli  in \cite{DGL}. Indeed, they proved an axiomatic theory to establish the scale invariant Harnack inequality in very general settings. In particular, their procedure applies  in Carnot Carath\'eodory metric spaces.

Recently, Gutierrez and Tournier  in \cite{GuT} and Tralli in \cite{TG}  provided direct proofs (in the Heisenberg group $\mathbb{H}^1$  and
in  $H$-type groups,
respectively)
of the critical density estimates for super solutions using barriers, but under the restrictive assumption that the matrix  of the coefficients $a_{ij}$ is a small perturbation of the Identity matrix.

The paper is organized as follows. Section \ref{ABPW} contains a few
preliminaries and  the proof of the ABP maximum principle. In
Section \ref{metric} we prove two structure theorems  of the CC ball
which, together with the results proved by Franchi and Lanconelli in
\cite{FL},  play a central role in our study of barriers. The
construction of the barrier and the critical density estimate are
established in Section \ref{sbarrier} and Section \ref{CDE},
respectively. In Section \ref{dbp},
we prove the Ring theorem and the existence of suitable uniform barrier functions for the operator ${L}$ in  \eqref{L}. Finally, 
we prove the double ball property and
we
indicate how to obtain the power decay property and the invariant Harnack's inequality from the
results in \cite{DGL}.

\section{The ABP with a weight}\label{ABPW}

We first introduce some standard notations and well known facts.

\begin{definition}
Let $w:\O\rightarrow \R$ with $\O\subset \R^2.$ We say that an affine function $\ell$ is a supporting hyperplane for $w$ at $x_0\in \O$ in $\O$  if $\ell$ touches $w$ by below
at $x_0$ in $\O,$ i.e. $\ell(x_0)=w(x_0)$ and $\ell(x)\leq w(x)$ for any $x\in \O.$\\
Let $u$ be a continuous function in a open convex set $\O.$
The convex envelope of $u$ in $\O$ is defined by
\[
\begin{split}
\G(u)(x)&=\sup_w\{  w(x): w\leq u \, \textrm{in}\,  \O, w   \, \textrm{convex in}\, \O \}\\
&=\sup_\ell\{  \ell(x): \ell\leq u \, \textrm{in}\,  \O, \ell  \, \textrm{is affine} \}
\end{split}
\]
for $x\in \O.$\\
Obviously, $\G(u)$ is a convex function in $\O$ and the set $\{ u=\G(u)\}$ is called the contact set.
\end{definition}
\begin{definition}
The normal mapping of $u\in C(\O),$ or sub-differential of $u,$ is the set valued function $Du$ defined by
\[
Du(x_0)=\{ p \in \R^2: u(x)\geq u(x_0)+p\cdot (x-x_0) \,\textrm{ for all } \, x\in \O\}
\]
Given $E\subset \O,$ we define $Du(E)=\bigcup_{x\in E} Du(x).$
\end{definition}
When $u$ is differentiable $Du$ is basically the gradient of $u.$

\begin{theorem}\label{measure}
If $\O$ is open and $u\in C(\O)$ then the class
$$S=\{
E\subset \O : Du(E) \textrm{ \,is \, Lebesgue \, measurable}
\}$$ is a Borel $\sigma$ algebra. The set function $\mu_u:S \rightarrow \overline{R}$ defined by $$\mu_u(E)=|Du(E)|$$ is a Borel measure
and it is finite on compact sets. The measure $\mu_u$ is called the Monge Amp\`{e}re measure associated with the function $u.$

Moreover, if $u\in C^2(\O)$ is a convex function $\O,$ then $$\mu_u(E)=\int_E \det (D^2u)(x)dx$$ for any Borel set $E\subset \O.$
\end{theorem}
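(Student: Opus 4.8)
The plan is to prove Theorem \ref{measure} (the construction of the Monge--Ampère measure $\mu_u$) by following the classical approach, first establishing the measurability/finiteness facts about the normal mapping and then verifying the $\sigma$-algebra and measure properties. First I would observe that for $u\in C(\O)$ and any compact $K\subset\O$, the set $Du(K)$ is bounded: if $p\in Du(x_0)$ with $x_0\in K$, then the supporting plane $u(x_0)+p\cdot(x-x_0)$ stays below $u$ on $\O$, and choosing a small ball around $x_0$ inside $\O$ together with the continuity (hence local boundedness) of $u$ forces $|p|\le C(K)$. A similar argument shows $Du(K)$ is closed when $K$ is compact: a limiting sequence $p_k\in Du(x_k)$, $x_k\to\bar x\in K$, $p_k\to\bar p$ passes to the limit in the defining inequality $u(x)\ge u(x_k)+p_k\cdot(x-x_k)$, so $\bar p\in Du(\bar x)\subset Du(K)$. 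Hence $Du(K)$ is compact, in particular Lebesgue measurable with finite measure; writing an open $\O'\Subset\O$ as a countable union of compacta gives that $Du$ of any such $\O'$ is measurable, and then $\sigma$-subadditivity considerations give finiteness on compact sets for the eventual measure.

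Next I would set up the $\sigma$-algebra. The key algebraic fact is that for disjoint $E_1,E_2$ one does \emph{not} have $Du(E_1)\cap Du(E_2)=\emptyset$ in general, but the overlap is controlled: the set of points $p$ belonging to $Du(x)$ for two distinct $x$ is exactly the set of $p$ such that the supporting plane with slope $p$ touches $u$ at more than one point, and this ``bad set'' $\mathcal{N}=\{p: p\in Du(x)\cap Du(x') \text{ for some } x\ne x'\}$ has Lebesgue measure zero. This is the standard lemma (it follows because $\mathcal{N}\subset D u(\{u=\G(u), u \text{ not strictly convex there}\})$, or directly by a covering argument à la the fact that the set of non-differentiability directions of the Legendre transform is null). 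Granting $|\mathcal{N}|=0$, the map $E\mapsto |Du(E)|$ becomes countably additive on disjoint sets up to the null set $\mathcal{N}$: for disjoint $E_j$, $Du(\bigcup E_j)=\bigcup Du(E_j)$ and the pairwise intersections lie in $\mathcal{N}$, so $|Du(\bigcup E_j)|=\sum_j |Du(E_j)|$. Checking that $S$ is closed under complements and countable unions is then routine: measurability of $Du(E)$ and of $Du(\O\setminus E)$ together with $Du(\O)=Du(E)\cup Du(\O\setminus E)$ and the measurability of $Du(\O)$ (exhaust $\O$ by compacta) handles complements; countable unions follow from $Du(\bigcup E_j)=\bigcup Du(E_j)$.

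For the final identity, when $u\in C^2(\O)$ is convex, I would argue that $Du$ is precisely the classical gradient map $x\mapsto \nabla u(x)$ (convexity forces the unique supporting plane at each point to be the tangent plane), so $\mu_u(E)=|\nabla u(E)|$. Since $D^2u\ge 0$, the change-of-variables/area formula for the $C^1$ map $\nabla u$ gives $|\nabla u(E)|\le \int_E \det D^2u\,dx$ always, with equality when $\nabla u$ is injective; in general one invokes the Sard-type argument that the set where $\det D^2u=0$ contributes nothing to the image and that away from it $\nabla u$ is locally a diffeomorphism, yielding equality for all Borel $E$. The main obstacle in the whole proof is the measure-zero lemma $|\mathcal{N}|=0$, i.e. that slopes of supporting planes touching $u$ at two or more points form a null set --- everything else (boundedness, closedness, exhaustion, the $C^2$ formula) is soft --- so that is the step I would write out most carefully, using the standard argument that such $p$ are non-differentiability points of the convex conjugate of (a convexification of) $u$, hence null by Rademacher's theorem.
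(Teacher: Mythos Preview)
The paper does not give its own proof of this theorem; immediately after the statement it writes ``We suggest the reference \cite[Theorem 1.1.13 and Example 1.1.14]{G} for the proof.'' Your proposal is a correct sketch of precisely that classical argument from Guti\'errez's book: compactness of $Du(K)$ for compact $K$, the key null-set lemma that slopes $p$ belonging to $Du(x)\cap Du(x')$ for two distinct points form a Lebesgue null set $\mathcal{N}$, and then the routine verification of the $\sigma$-algebra and countable-additivity properties modulo $\mathcal{N}$; the $C^2$ identity via the area formula and Sard's theorem is also standard and correct.

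One small wording issue: your complement argument as phrased is slightly circular (you assume $Du(\O\setminus E)$ measurable while trying to prove it). The intended fix is the usual one: write $Du(\O\setminus E)=\bigl(Du(\O)\setminus Du(E)\bigr)\cup\bigl(Du(E)\cap Du(\O\setminus E)\bigr)$, observe the second piece sits inside the null set $\mathcal{N}$, and conclude measurability from that of $Du(\O)$ and $Du(E)$. With that adjustment your outline matches the cited reference and is complete.
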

We suggest the reference \cite[Theorem 1.1.13 and Example 1.1.14]{G} for the proof.

One can prove the following classical ABP estimate.

\begin{theorem}\label{ABP0}
 Let $\O \subset \R^2$ be a bounded open set and assume $u\in C(\overline{\O}),$  $u\geq 0$ on $\p \O$.
 Define $u^-(x)=\max \{-u(x),0\}$ and let
 $\G_u$ be the convex envelope
of $-u^-$ in a Euclidean ball $B_{\mathcal E}(2d)$ of radius $2d$ such that $\O
\subset B_{\mathcal E}(2d)$ and  extend $u\equiv 0$ outside $\Omega$

\[
\sup_\O u^-\leq \frac{d}{c}  \, \left(\mu_{\G_u}({\{u=\Gamma_u\}\cap \O})\right)^{1/2}
\]
where $d=\diam \, \O$ is the Euclidean diameter of $\O,$ $c$ is a positive universal constant. 
\end{theorem}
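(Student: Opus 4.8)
The plan is to adapt the classical sliding-plane proof of the Aleksandrov--Bakelman--Pucci estimate (see \cite[Section 3]{CC} and \cite[Section 1.4]{G}) to the planar case. We may assume $M:=\sup_\O u^->0$, otherwise there is nothing to prove. As a preliminary remark, note that although the extension $u\equiv 0$ outside $\O$ is in general discontinuous across $\p\O$, the function $u^-$ is continuous on the whole ball $\overline{B_{\mathcal E}(2d)}$: indeed $u^-=0$ on $\p\O$ (since $u\ge 0$ there and $u\in C(\overline\O)$) and $u^-=0$ outside $\O$. Hence $-u^-\in C(\overline{B_{\mathcal E}(2d)})$, its convex envelope $\G_u$ is continuous, and every affine $\ell\le -u^-$ on $B_{\mathcal E}(2d)$ also satisfies $\ell\le\G_u$. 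Finally, fix $x_0\in\O$ with $u(x_0)=-M$; such a point exists because the negative part of the extended function is carried by $\O$.

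The core of the argument is the geometric inclusion
\[
B\!\left(0,\tfrac{M}{4d}\right)\subseteq D\G_u\big(\{u=\G_u\}\cap\O\big).
\]
To prove it, fix $p\in\R^2$ with $|p|<M/(4d)$, consider the affine functions $\ell_c(x)=p\cdot(x-x_0)+c$, and let $c^{\ast}$ be the largest $c$ for which $\ell_{c^{\ast}}\le -u^-$ on $B_{\mathcal E}(2d)$; by continuity and compactness this value is attained at some touching point $\bar x$. Evaluating at $x_0$ gives $c^{\ast}\le -u^-(x_0)=-M$. If $\bar x\notin\O$ then $-u^-(\bar x)=0$, so $0=\ell_{c^{\ast}}(\bar x)\le |p|\,|\bar x-x_0|+c^{\ast}\le 4d\,|p|-M$, which forces $|p|\ge M/(4d)$, a contradiction; hence $\bar x\in\O$ and $|\bar x-x_0|\le\diam\O=d$. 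Then $-u^-(\bar x)=\ell_{c^{\ast}}(\bar x)\le |p|\,d+c^{\ast}<\tfrac{M}{4}-M<0$, so $u(\bar x)<0$ and $-u^-(\bar x)=u(\bar x)$. Since $\ell_{c^{\ast}}\le\G_u\le -u^-$ with equality at $\bar x$, we conclude $\G_u(\bar x)=u(\bar x)$ and $p\in D\G_u(\bar x)$, which proves the inclusion.

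The set $\{u=\G_u\}\cap\O$ is relatively closed in $\O$ (the restriction of $u$ to $\O$ is continuous, as is $\G_u$), hence Borel, so by Theorem \ref{measure} and the inclusion just obtained,
\[
\pi\left(\frac{M}{4d}\right)^2=\left|B\!\left(0,\tfrac{M}{4d}\right)\right|\le\big|D\G_u\big(\{u=\G_u\}\cap\O\big)\big|=\mu_{\G_u}\big(\{u=\G_u\}\cap\O\big).
\]
Taking square roots gives the claimed estimate with the universal constant $c=\sqrt{\pi}/4$.

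I do not expect a genuine obstacle, since the proof is standard; the points that deserve care are the continuity of $u^-$ on the ambient ball $\overline{B_{\mathcal E}(2d)}$ (which is what legitimizes working with $\G_u$ at all), the confinement of the touching point $\bar x$ to $\O$ via the Euclidean diameter bound — this is precisely the step that uses the enveloping ball of radius $2d$ — and the verification that at these touching points $\G_u$ actually coincides with $u$, and not merely with $-u^-$.
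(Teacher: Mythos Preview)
Your argument is correct and is exactly the classical sliding-plane proof of the ABP estimate in the plane. The paper does not supply its own proof of Theorem~\ref{ABP0}; it simply refers the reader to \cite[Theorem~1.4.5]{G}, and what you have written is precisely the argument found there (adapted to dimension two), including the inclusion $B(0,M/(4d))\subset D\Gamma_u(\{u=\Gamma_u\}\cap\Omega)$ and the resulting constant $c=\sqrt{\pi}/4$.
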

We suggest the reference \cite[Theorem 1.4.5]{G} for a detailed proof
of Theorem \ref{ABP0}.

The main result of this section is the following
\begin{theorem}[Weighted ABP Maximum Principle]\label{ABP}
Let $\Omega$ be a bounded domain of $\R^2$ and let $u\in C(\bar \O)$  and $u\geq 0$ on $\p \Omega.$ 
Define $u^-(x)=\max \{-u(x),0\},$ $f^+=\max\{f(x),0\}.$
Moreover, $\G_u$ is the convex envelope of $-u^-$ in a ball $B_{\mathcal E}(2d)$ of radius $2d$ such that $\O \subset B_{\mathcal E}(2d)$ and we have extended $u\equiv 0$ outside $\Omega.$
Assume 
$u\in C^2(\O)$
is a classical solution of $Lu(x)\leq f(x) x_1^2$ in
$\O,$ with $f$ bounded. Then,
\begin{equation}\label{eqABP}
\sup_{\Omega}u^-\leq  C \diam(\Omega) \left( \int_{\Omega \cap \{u=\G_u\}}( f^+(x) )^2 x_1^{2}dx \right)^{\frac{1}{2}}
\end{equation}
Here $C$ is a positive universal constant only depending on $\Lambda, \lambda.$ Moreover, $\diam(\Omega)$ is the Euclidean diameter of $\Omega.$

\end{theorem}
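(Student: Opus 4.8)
The plan is to reduce the weighted statement to the classical ABP estimate of Theorem~\ref{ABP0} applied to $u$ on $\Omega$, and the whole point is to control the Monge--Amp\`ere measure $\mu_{\G_u}$ of the contact set by the weighted integral appearing in \eqref{eqABP}. First I would recall that on the contact set $\{u=\G_u\}\cap\Omega$, where $u$ is $C^2$ and $\G_u$ is its convex envelope, the real Hessian $D^2u$ is nonnegative definite, so that by Theorem~\ref{measure},
\[
\mu_{\G_u}\big(\{u=\G_u\}\cap\Omega\big)\le \int_{\{u=\G_u\}\cap\Omega}\det\big(D^2u(x)\big)\,dx .
\]
So it suffices to bound $\det(D^2u)$ pointwise on the contact set by $C\,(f^+(x))^2 x_1^2$, and then plug this into Theorem~\ref{ABP0} and square.

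The key algebraic identity is the relation between $\det(D^2u)$ and the determinant of the symmetric matrix~\eqref{matrix} of second horizontal derivatives. Writing out $X_1^2u=\p_{x_1x_1}u$, $X_2X_1u=x_1\p_{x_2x_1}u$ (modulo lower-order terms involving $\p_{x_2}u$ coming from the non-commutation $X_2X_1=x_1\p_{x_2x_1}+$ something — I would be careful here about which ordering is used in~\eqref{L} and in~\eqref{matrix}), and $X_2^2u=x_1^2\p_{x_2x_2}u+x_1\p_{x_2}u$, one finds that the determinant of the matrix in~\eqref{matrix} equals $x_1^2\det(D^2u)$ up to first-order terms; this is the identity referred to as \eqref{Hessian} in the introduction. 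Thus on the contact set
\[
x_1^2\det(D^2u(x))=\det\begin{pmatrix}X_1^2u & X_2X_1u\\ X_2X_1u & X_2^2u\end{pmatrix}+(\text{first-order corrections}).
\]
Next, on the contact set $u$ is ``convex along the horizontal directions'' in the appropriate sense, so the symmetric matrix in~\eqref{matrix} is nonnegative definite there; combined with the ellipticity~\eqref{ell} of $(a_{ij})$ and the arithmetic-geometric mean inequality applied to the two nonnegative eigenvalues, one gets
\[
\lambda^2\det(\text{matrix }\eqref{matrix})\le \Big(\tfrac12\big(a_{11}X_1^2u+2a_{12}X_2X_1u+a_{22}X_2^2u\big)\Big)^2\cdot(\text{const})=\Big(\tfrac12 Lu\Big)^2(\text{const})\le C\,(f^+(x))^2 x_1^4,
\]
using the hypothesis $Lu\le f x_1^2$ together with $Lu\ge 0$ on the contact set (since the matrix~\eqref{matrix} and $(a_{ij})$ are both nonnegative definite there, $Lu=\trace((a_{ij})\cdot\eqref{matrix})\ge 0$). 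Combining the two displays gives $\det(D^2u(x))\le C\,(f^+(x))^2 x_1^2$ on $\{u=\G_u\}\cap\Omega$ — the extra $x_1^2$ being exactly the weight that makes the estimate subcritical and hence available.

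Finally I would substitute this pointwise bound into Theorem~\ref{ABP0}:
\[
\sup_\Omega u^-\le \frac{d}{c}\Big(\mu_{\G_u}(\{u=\G_u\}\cap\Omega)\Big)^{1/2}\le \frac{d}{c}\Big(\int_{\{u=\G_u\}\cap\Omega}\det(D^2u)\,dx\Big)^{1/2}\le C\,d\Big(\int_{\Omega\cap\{u=\G_u\}}(f^+(x))^2 x_1^2\,dx\Big)^{1/2},
\]
with $d=\diam\Omega$, which is precisely~\eqref{eqABP} after absorbing constants. I expect the main obstacle to be the bookkeeping of the first-order terms: the operators $X_j$ do not commute, so $X_2X_1u$ and $X_1^2u$, $X_2^2u$ are not literally $x_1^2$ times entries of $D^2u$, and one must check that the first-order correction terms either vanish on the contact set, or are dominated, or can be arranged away by a harmless modification of the matrix — ensuring that $\det(\text{matrix }\eqref{matrix})= x_1^2\det(D^2u)$ genuinely holds (or holds up to controlled error) is the delicate point. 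A secondary subtlety is justifying the sign ($Lu\ge 0$, i.e. nonnegative definiteness of~\eqref{matrix}) on the contact set when $u$ is only $C^2$ and the contact set may have empty interior; this is handled exactly as in the classical case of \cite{CC}, using that $u$ agrees with its convex envelope to second order at points of the contact set in the interior of $\Omega$.
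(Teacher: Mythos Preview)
Your overall strategy is exactly the paper's: apply the classical ABP estimate (Theorem~\ref{ABP0}) and bound $\mu_{\G_u}$ on the contact set via the pointwise inequality $\det(D^2u)\le C(Lu)^2/x_1^2$, which together with $Lu\le f x_1^2$ and $Lu\ge 0$ on the contact set yields the weighted integrand. The matrix inequality you invoke (AM--GM on eigenvalues) is the same as the paper's $\det(X^2u)\le (\trace(AX^2u))^2/(4\det A)$.

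However, your ``main obstacle'' is a phantom created by a miscalculation. You write $X_2^2u=x_1^2\p_{x_2x_2}u+x_1\p_{x_2}u$, but in fact $X_2^2u=(x_1\p_{x_2})(x_1\p_{x_2}u)=x_1^2u_{22}$ with \emph{no} first-order term, because the coefficient $x_1$ is independent of $x_2$. Likewise $X_2X_1u=x_1\p_{x_2}(\p_{x_1}u)=x_1u_{12}$ exactly. Thus the matrix~\eqref{matrix} is literally
\[
\begin{pmatrix} u_{11} & x_1u_{12}\\ x_1u_{12} & x_1^2u_{22}\end{pmatrix},
\]
and the identity $\det(X^2u)=x_1^2\det(D^2u)$ holds \emph{on the nose}, not ``up to first-order corrections''. (The non-commutation shows up only in $X_1X_2$, which does not appear in $L$ as defined in~\eqref{L}.) So all your bookkeeping worries evaporate.

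Two genuine points you gloss over that the paper does address: (i) the division by $x_1^2$ requires knowing the contact set avoids $\{x_1=0\}$; the paper argues this (in the strictly convex case) by observing $Lu(0,x_2)=a_{11}u_{11}(0,x_2)\le 0$, forcing $u_{11}\le 0$ there, incompatible with strict convexity; (ii) when $u$ is merely convex on the contact set, the paper uses Sard's theorem to discard the zero set of $\det D^2u$ before integrating. Your allusion to ``as in \cite{CC}'' is in the right spirit but these details are specific to the weighted setting and worth making explicit.
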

\begin{proof}
We first assume that $u$ is strictly convex in  the contact set $\{u=\G_u\}$. Then $Du$ is a one-to-one map.
Moreover, the contact set $\{u=\G_u\}$ has empty  intersection with the line $\{ x_1=0\},$
because $(a_{11} u_{11})(0,x_2)=Lu(0,x_2)\leq 0,$ and since $a_{11}$ is positive then $u_{11}(0,x_2)\leq 0,$ while $u$ is strictly convex in $\{u=\G_u\}.$
Remark that if
$u\in C^2(\O)$ is convex then
 the symmetric matrix
\begin{equation}\label{Hessian}
X^2u=
\left(
\begin{array}{cc}
 X_1^2u  & X_2X_1u  \\
X_2X_1u  &X_2^2u  \\
\end{array}
\right)=\left(
\begin{array}{cc}
u_{11}  & x_1u_{12} \\
x_1u_{12}  &x_1^2u_{22}  \\
\end{array}
\right)
\end{equation}
is nonnegative definite. Here we have denoted by $u_{ij}=\p_{x_i}\p_{x_j}u.$

Moreover,  for $u$ convex  we have
\begin{equation}\label{matrixineq}
\begin{split}
\det (D^2u ) &=(u_{11}u_{22}-u_{12}^2)=\frac{\left(u_{11}x_1^2u_{22}-(x_1u_{12})^2\right)}{x_1^2}\\
 &=\frac{\left(X_1^2uX_2^2u-(X_2X_1u)^2\right)}{x_1^2}=\frac{\det (X^2u)}{ x_1^2}\\
& \leq  \frac{\left(\trace(AX^2u)\right)^2}{4 x_1^{2} \det A}\\
\end{split}
\end{equation}
for every $A> 0$ and $A$ symmetric.

Let $B_{\mathcal E}(2d)$ be a Euclidean ball containing $\O.$  Since
$\G_u$ is convex, it follows that $\G_u$ has a supporting hyperplane at $x_0,$ for $x_0\in \overline
B_{\mathcal E}(2d)\cap\{u=\G_u\}\subset \Omega.$ Since in addition $\G_u(x_0)=u(x_0),$ this hyperplane is also a supporting hyperplane to $u$ at the same point.
That is  
$D\G_u(x_0)\subset Du(x_0)$ for $x_0\in \overline B_{\mathcal E} (2d)\cap\{u=\G_u\}\subset \Omega,$ 
and by recalling Theorem \ref{measure} we have
\begin{equation}\label{eq1}
\left|D\G_u(\{u=\Gamma_u\}\cap \O)\right|
\leq \left|Du(\{u=\Gamma_u\}\cap \O)\right|
= \int_{\{u=\Gamma_u\}\cap \O}(\det D^2u) dx.
\end{equation}
By recalling that $u$ is convex in $\{u=\Gamma_u\}\cap \O$ we can
apply the matrix inequality \eqref{matrixineq} in the right hand
side of \eqref{eq1} and we have
\begin{equation}\label{eq2}
\int_{\{u=\Gamma_u\}\cap \O}(\det D^2u)dx\leq C\int_{\{u=\Gamma_u\}\cap \O}\frac{(Lu)^2}{x_1^{2}}dx.
\end{equation}
where $C$ is a positive constant depending on $\L,\l.$
By applying Theorem \ref{ABP0} and \eqref{eq1}, \eqref{eq2}  and recalling that $Lu(x)\leq f(x) \,x_1^2= f^+(x) \, x_1^2$ on the contact set, we get the desired ABP estimate \eqref{eqABP}.

For the general case, $u$ is only convex in the contact set. 
Let $S_0=\{ x\in \O : \det D^2u =0\}.$ By Sard's Theorem (see \cite{EG} or \cite{F}) we have $|Du(S_0)|=0.$
Since $E=\{u=\G_u\}\cap \O$ is a Borel set, $E\cap S_0$ and $E\setminus S_0$ are also Borel sets.
Hence $$|Du(E)|=|Du(E\cap S_0)|+|Du(E\setminus S_0)|=|Du(E\setminus S_0)|$$
and by \eqref{eq1} and \eqref{eq2} we have
\[
\begin{split}
\left|D\G_u(E)\right|
&\leq \left|Du(E)\right|=\left|Du(E\setminus S_0)\right|
= \int_{E\setminus S_0}(\det D^2u) dx\\
&\leq C\int_{E\setminus S_0}\frac{(Lu)^2}{x_1^{2}}dx\leq C\int_{E\setminus S_0}{f^2}{x_1^{2}}dx\leq C\int_{E}{f^2}{x_1^{2}}dx.
\end{split}
\]
\end{proof}

As a corollary, we get the weak maximum principle for the operator
$L$ in \eqref{L}.
\begin{theorem}[Weak Maximum Principle]\label{WMP}
Let $\Omega$ be a bounded open set in $\R^2$ and let $u,v\in C^2(\Omega)\cap C(\bar \Omega)$  such that
$u\leq v$ on $\p \O$ and $Lv\leq Lu$ in $\O.$ Then, $u\leq v$ in $\O.$
\end{theorem}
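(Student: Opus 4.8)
The plan is to derive the Weak Maximum Principle as a direct consequence of the Weighted ABP Maximum Principle (Theorem \ref{ABP}). First I would set $w = u - v$, so that $w \in C^2(\Omega) \cap C(\bar\Omega)$, and by hypothesis $w \le 0$ on $\partial\Omega$, while $Lw = Lu - Lv \ge 0$ in $\Omega$. The goal is to show $w \le 0$ in $\Omega$, which is exactly the statement $\sup_\Omega w^+ = 0$ where $w^+ = \max\{w,0\}$; equivalently, applying Theorem \ref{ABP} to the function $-w$, one has $(-w) \ge 0$ on $\partial\Omega$ and $L(-w) = -Lw \le 0 = f(x) x_1^2$ in $\Omega$ with the choice $f \equiv 0$ (which is certainly bounded).

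With $f \equiv 0$, the right-hand side of the estimate \eqref{eqABP} vanishes identically, since $\int_{\Omega \cap \{(-w) = \Gamma_{-w}\}} (f^+(x))^2 x_1^2 \, dx = 0$. Hence Theorem \ref{ABP} yields $\sup_\Omega (-w)^- \le 0$. But $(-w)^- = \max\{w, 0\} = w^+ \ge 0$ by definition, so we conclude $\sup_\Omega w^+ = 0$, i.e. $w \le 0$ throughout $\Omega$, which is precisely $u \le v$ in $\Omega$.

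I do not anticipate any serious obstacle here: the entire content is packaged inside Theorem \ref{ABP}, and the only point requiring a moment's care is the bookkeeping of signs — making sure the function fed into the ABP estimate is the one whose negative part we want to control, and that the hypothesis $u \le v$ on $\partial\Omega$ translates correctly to the boundary nonnegativity required by Theorem \ref{ABP}. One should also note that the diameter factor $\diam(\Omega)$ in \eqref{eqABP} is finite because $\Omega$ is bounded, so the product on the right-hand side is genuinely $0$ and not an indeterminate form; with $f \equiv 0$ this is immediate regardless. This completes the deduction.
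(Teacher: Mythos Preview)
Your proposal is correct and follows essentially the same approach as the paper: the paper's proof is the single line ``Apply Theorem \ref{ABP} to $w=v-u$,'' which is exactly your argument with the cosmetic difference that you first name $w=u-v$ and then feed $-w=v-u$ into Theorem \ref{ABP}. The sign bookkeeping you spell out is accurate, and nothing further is needed.
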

\begin{proof}
Apply Theorem \ref{ABP} to $w=v-u.$
\end{proof}

An alternative  proof of the Weak Maximum Principle, without using Theorem \ref{ABP}, can be found in \cite[Corollary 1.3.]{L}.

\section{Grushin metric and sublevel sets}\label{metric}
In this section we recall definition and basic properties of the Grushin metric. 

The vector fields defined in \eqref{grushin}  induce on $\R^2$ a metric $d_{CC}$ in the following way (see \cite{FL}, 
\cite{FP} and \cite{NSW}, \cite{BLU}).
\begin{definition}[Carnot Carath\'eodory metric]\label{CC}
A Lipschitz continuous curve $\gamma: [0,T] \rightarrow \R^2,$  $T
\geq 0$, is subunit if there exists a vector of measurable functions
$h = (h_1,h_2) : [0,T] \rightarrow \R^2$
 such that
 $\gamma'(t)=\sum_{j=1}^2 h_j(t)X_j(\gamma(t))$ and $\sum_{j=1}^2 h^2_j(t)\leq 1$ for a.e. $t\in [0,T].$
 Define the Carnot Carath\'eodory distance $d_{CC} : \R^2 \times \R^2 \rightarrow [0, +\infty)$ by setting
\[
\begin{split}
d_{CC}(x,y) = \inf \{T \geq 0 : & \, \textrm{there exists a subunit curve} \, \gamma : [0,T] \rightarrow \R^2 \\
&\, \textrm{such that}\,
\gamma(0) = x \, \textrm{and} \, \gamma(T) = y\}.
\end{split}
\]
\end{definition}
It is well-known  that $d_{CC}(x,y)$ is finite for all $x,y,$ because the vector fields are smooth and satisfy H\"ormander condition (see  \cite{BLU}, \cite{NSW}).

The first structure Theorem  below, which is a special case of the results proved by Franchi and Lanconelli in \cite{FL}, plays a central role in our study of barriers.

We denote by
$B_{CC}(x,r) = \{y \in R^2 : d_{CC}(x,y) < r\}$ the balls in $\R^2$ defined by the metric $d_{CC}.$

For $j=1,2$ define the functions $F_j : \R^2 \times [0, +\infty) \rightarrow [0, +\infty)$ by
$$F_1(x, r) = r, \quad F_2(x, r) = r(|x_1| + F_1(x, r))=r(|x_1| +r).$$
Note that  $r \rightarrow F_j (x, r)$ is increasing and it satisfies the following doubling property
$$F_j(x,2r) \leq CF_j(x,r), x \in \R^2, 0 < r < \infty$$
 for all $j = 1,2.$

The structure of the balls $B_{CC}(x, r)$ can be described by means of the boxes
\[
\begin{split}
Box(x, r) :=& \{x + h : |h_j | < F_j (x, r), j = 1, 2\}\\
= &\{x + h : |h_1| < r, |h_2|<r|x_1|+r^2\}.
\end{split}
\]
For any fixed $x \in\R^2$ the function $F_j(x,\cdot)$
 is strictly increasing and maps $]0,+\infty[$ onto itself. We denote its inverse by $G_j(x,\cdot) = F_j(x,\cdot)^{-1}.$
 Precisely, $G_1(x,r)=r$ and $G_2(x,r)=\frac{-|x_1|+\sqrt{x_1^2+4r}}{2}.$
  The following structure theorem is proved in \cite{FL}
\begin{theorem}\label{FRLa}
There exists a constant $C > 0$ such that:
\[
Box(x,C^{-1}r) \subset B_{CC}(x,r) \subset Box(x,Cr), x \in \R^2, r \in ]0,+\infty[
\]
and
\[
C^{-1}d_{CC}(x,y)\leq \sum_{j=1}^2G_j(x,|y_j -x_j|)\leq Cd_{CC}(x,y), \quad x,y\in \R^2.
\]
\end{theorem}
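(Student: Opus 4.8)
The plan is to establish the two ball--box inclusions directly from the definition of $d_{CC}$, and then to deduce the two--sided comparison between $d_{CC}(x,y)$ and $\sum_{j}G_j(x,|y_j-x_j|)$ as a formal consequence of those inclusions together with the monotonicity of $F_j$ and $G_j$.

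For the upper inclusion $B_{CC}(x,r)\subset Box(x,Cr)$ I would estimate along an arbitrary subunit curve. If $y\in B_{CC}(x,r)$ there is a subunit $\gamma=(\gamma_1,\gamma_2):[0,T]\to\R^2$ with $\gamma(0)=x$, $\gamma(T)=y$ and $T<r$; writing $\gamma_1'=h_1$, $\gamma_2'=h_2\gamma_1$ with $h_1^2+h_2^2\le1$ gives $|\gamma_1(t)-x_1|\le t$, hence $|\gamma_1(t)|\le|x_1|+t$, and then $|\gamma_2(t)-x_2|\le\int_0^t|\gamma_1(s)|\,ds\le|x_1|t+t^2/2$. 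Taking $t=T<r$ yields $|y_1-x_1|<r=F_1(x,r)$ and $|y_2-x_2|\le|x_1|r+r^2/2<F_2(x,r)$, so $y\in Box(x,r)$ and the inclusion holds even with $C=1$.

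The core of the argument is the reverse inclusion $Box(x,C^{-1}r)\subset B_{CC}(x,r)$: given $y$ with $|y_1-x_1|<\delta:=C^{-1}r$ and $|y_2-x_2|<F_2(x,\delta)=\delta(|x_1|+\delta)$, one must exhibit a subunit curve from $x$ to $y$ of time--length $<r$. I would build it as a concatenation of three integral curves of $X_1$ and $X_2$: (i) flow along $X_1$ for time $\delta$ in the direction that increases $|x_1|$, reaching a point whose first coordinate has modulus $|x_1|+\delta$; (ii) flow along $X_2$ for time $|y_2-x_2|/(|x_1|+\delta)<\delta$, which drives the second coordinate exactly to $y_2$ because along this arc it moves at speed $|x_1|+\delta$; (iii) flow back along $X_1$ to first coordinate $y_1$, at cost at most $\delta+|y_1-x_1|<2\delta$. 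The total time--length is $<4\delta=4C^{-1}r$, hence $<r$ as soon as $C>4$. The one delicate point is the characteristic line $\{x_1=0\}$, where $X_2$ vanishes and the motion in the $x_2$ direction freezes: there step (i) still raises the first coordinate to modulus $\delta=|x_1|+\delta$, and the definition $F_2(x,r)=r(|x_1|+r)$ is precisely calibrated so that step (ii) remains cheap. This reachability construction near $\{x_1=0\}$, together with the scale bookkeeping that keeps all comparisons consistent (where the doubling property of $F_2$ enters), is the main obstacle---a mild one here, because we work in two dimensions with an explicit pair of vector fields.

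Finally, for the distance comparison I would use that, for fixed $x$, the map $r\mapsto F_j(x,r)$ is a strictly increasing bijection of $(0,\infty)$ onto itself with inverse $G_j(x,\cdot)$, so that $y\in Box(x,s)$ is equivalent to $\max_jG_j(x,|y_j-x_j|)<s$. Put $\sigma:=\sum_jG_j(x,|y_j-x_j|)$, so $\max_jG_j(x,|y_j-x_j|)\le\sigma$. For every $\sigma'>\sigma$ we then have $y\in Box(x,\sigma')\subset B_{CC}(x,C\sigma')$ by the reverse inclusion; letting $\sigma'\downarrow\sigma$ gives $d_{CC}(x,y)\le C\sigma$. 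Conversely, with $\rho:=d_{CC}(x,y)$, for every $\rho'>\rho$ we have $y\in B_{CC}(x,\rho')\subset Box(x,C\rho')$ by the forward inclusion, so $G_j(x,|y_j-x_j|)<C\rho'$ for $j=1,2$ and $\sigma<2C\rho'$; letting $\rho'\downarrow\rho$ gives $\sigma\le2C\,d_{CC}(x,y)$. Enlarging the constant from $C$ to $2C$ then gives the stated two--sided inequality.
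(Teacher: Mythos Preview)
Your argument is correct. Note, however, that the paper does not supply its own proof of this theorem: it is quoted as a special case of the structure results of Franchi and Lanconelli \cite{FL}, and the paper simply invokes that reference. What you have written is a clean, self-contained proof in the two-dimensional Grushin case.

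Your approach is the standard ``ball--box'' argument specialized to this setting: the inclusion $B_{CC}(x,r)\subset Box(x,r)$ follows by integrating the ODE along an arbitrary subunit curve, and the reverse inclusion is obtained by an explicit three-step broken integral curve of $X_1,X_2,X_1$, exploiting that after step (i) the first coordinate has modulus $|x_1|+\delta$ so that $X_2$ transports the second coordinate at exactly that speed. The deduction of the two-sided distance estimate from the ball--box inclusions, via the monotonicity of $F_j$ and $G_j$ and the equivalence $y\in Box(x,s)\Leftrightarrow \max_j G_j(x,|y_j-x_j|)<s$, is also correct; the only cost is the harmless factor $2$ coming from bounding the sum $G_1+G_2$ by twice the maximum, which you absorb into the constant. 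Compared with the general Franchi--Lanconelli machinery (which handles much broader families of degenerate vector fields), your argument is elementary and explicit, at the price of being tailored to this particular pair $X_1=\partial_{x_1}$, $X_2=x_1\partial_{x_2}$.
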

Here 
\begin{equation}\label{F}
\sum_{j=1}^2G_j(x,|y_j -x_j|)=|y_1-x_1|+\frac{\sqrt{x_1^2+4|y_2-x_2|}-|x_1|}{2}.
\end{equation}
For $j=1,2$ define the real numbers $d_j$ by
$
 d_1=1, d_2=2.
$
 We say that $d_j$ is the degree of the variable $x_j .$ Note that $F_j (0, r) = r^{d_j} .$
 
 Moreover, there exists a group of dilations $(\delta_t)_{t>0}$,
 \begin{equation}\label{dil}
\delta_t: \R^2 \rightarrow \R^2, \quad
\delta_t(x)=(tx_1,t^2x_2)
\end{equation}
such that the vector fields $X_j$ are $\delta_t$-homogeneous of degree one, i.e.
for every $u\in C^1(\R^2),$
\begin{equation}\label{invariance}
X_j(u\circ \delta_t)(x) = t(X_ju)(\delta_t(x)),
\end{equation}
for all
$x \in \R^2$ and $t>0.$

The number $Q =d_1+d_2= 3$ is the homogeneous dimension of $\R^2$ with respect to $\delta_t.$
 The size of balls
 in the metric $d_{CC}$ has been described by Franchi and Lanconelli \cite{FL} by means of the boxes
$Box(x, r).$ Precisely, there exists a positive constant $C>0$ such that
\begin{equation}\label{homog}
C^{-1}
F(x,r)
 \leq |B_{CC}(x,r)|\leq  C
 F(x,r)
 \end{equation}
 for every $x\in \R^2$ and $r>0.$
 Here $ |B|$ is the Lebesgue measure of $B$ and $F(x,r)=F_1(x,r)\cdot F_2(x,r)=r^2(|x_1| +r).$

 This means that
 the measure of  balls with radius $r$ and  center at $x$ with   $r\leq |x_1|$ is of Euclidean type $|x_1|r^2,$ whereas the Lebesgue measure of $B_{CC}(x,r)$ with $|x_1| < r$ is comparable to $r^{3}$. Unfortunately, Theorem \ref{FRLa} and \eqref{homog} are not enough to conclude that  the ring condition in \cite[Definition 2.6]{DGL} holds true in $B_{CC}.$
 
 Inspired by \eqref{F}, we then define a new quasi distance in $\R^2$ as
 \[
 \tilde d(x,y)=|x_1-y_1|+ \sqrt{x_1^2+y_1^2+4|x_2-y_2|}-\sqrt{x_1^2+y_1^2},
 \]
 where $x=(x_1,x_2), y=(y_1,y_2).$
 Obviously, $\tilde d$ is $1/2$ -H\"older continuous.
 
 We denote by
 \begin{equation}\label{ball}
B(x,r) = \{y \in R^2 : \tilde d(x,y) < r\}
\end{equation}
 the balls in $\R^2$ defined by the  quasi metric $\tilde d.$

\begin{theorem}[I Structure Theorem]\label{charact}
There exists a constant $C > 0$ such that:
\[
Box(y,C^{-1}r) \subset B(y,r) \subset Box(y,Cr), y \in \R^2, r \in ]0,+\infty[
\]
\end{theorem}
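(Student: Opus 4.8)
The plan is to prove both set inclusions by a direct elementary computation, after putting $\tilde d$ into a handier form. Rationalising the radical difference, for $y=(y_1,y_2)$ and $x=(x_1,x_2)$ one has
\[
\tilde d(y,x)=|x_1-y_1|+\frac{4|x_2-y_2|}{\sqrt{x_1^2+y_1^2+4|x_2-y_2|}+\sqrt{x_1^2+y_1^2}};
\]
in particular $|x_1-y_1|\le\tilde d(y,x)$, so the first box coordinate is automatically controlled, and the whole matter is to compare the second term above, whose denominator I call $D$, with $G_2(y,|x_2-y_2|)=\frac{2|x_2-y_2|}{\sqrt{y_1^2+4|x_2-y_2|}+|y_1|}$, bearing in mind that $x\in Box(y,\rho)$ is equivalent to $\max\{|x_1-y_1|,\,G_2(y,|x_2-y_2|)\}<\rho$.

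Next I would record two one-line bounds for $D$. Dropping $x_1^2$ and using $\sqrt{a+b}\ge\max\{\sqrt a,\sqrt b\}$ gives $D\ge|y_1|+2\sqrt{|x_2-y_2|}$. For the upper bound, $D\le 2\sqrt{x_1^2+y_1^2+4|x_2-y_2|}\le 2\bigl(|x_1|+|y_1|+2\sqrt{|x_2-y_2|}\bigr)$, and whenever $\tilde d(y,x)<r$ one also has $|x_1|\le|y_1|+|x_1-y_1|<|y_1|+r$, so on $B(y,r)$ this reads $D\le 4|y_1|+2r+4\sqrt{|x_2-y_2|}$. The only slightly delicate point is precisely this passage: $Box(y,r)$ depends on the centre $y$ alone, whereas $\tilde d$ treats $x_1$ and $y_1$ symmetrically, so one trades $x_1$ for $y_1$ at the cost of one unit of the radius.

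From here the inclusion $Box(y,C^{-1}r)\subset B(y,r)$ follows by assuming $|x_1-y_1|<C^{-1}r$ and $|x_2-y_2|<C^{-1}r(|y_1|+C^{-1}r)$, bounding the second term of $\tilde d$ by $4|x_2-y_2|/\bigl(|y_1|+2\sqrt{|x_2-y_2|}\bigr)$, and splitting into the Euclidean regime $|y_1|\ge C^{-1}r$, where this is $\le 4|x_2-y_2|/|y_1|\lesssim C^{-1}r$, and the sub-Riemannian regime $|y_1|<C^{-1}r$, where it is $\le 2\sqrt{|x_2-y_2|}\lesssim C^{-1}r$; taking $C$ large enough yields $\tilde d(y,x)<r$. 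For the reverse inclusion $B(y,r)\subset Box(y,Cr)$ one reads off $|x_1-y_1|<r$ at once, while $4|x_2-y_2|/D<r$ combined with the upper bound for $D$ gives $4|x_2-y_2|<r\bigl(4|y_1|+2r+4\sqrt{|x_2-y_2|}\bigr)$, a quadratic inequality in $\sqrt{|x_2-y_2|}$ whose positive root yields $\sqrt{|x_2-y_2|}\lesssim r+\sqrt{r|y_1|}$, hence $|x_2-y_2|\lesssim r(|y_1|+r)=F_2(y,r)$; absorbing the constants into $C$ and using that $F_2(y,\cdot)$ is increasing finishes the argument.

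I do not expect a genuine obstacle here: the proof is entirely computational, one-variable calculus throughout, the only thing to watch being the bookkeeping of constants across the two regimes $|y_1|\ge r$ and $|y_1|<r$ — the very dichotomy already visible in \eqref{homog}. As a byproduct, combined with Theorem \ref{FRLa} this shows that $\tilde d$ and $d_{CC}$ are equivalent quasi-distances.
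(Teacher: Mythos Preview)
Your proposal is correct and follows the same overall strategy as the paper---a direct elementary comparison of $\tilde d$ with the box coordinates---but the execution differs in a way worth noting. You rationalise the radical difference and bound the resulting denominator $D$ from above and below, which forces a case split on $|y_1|$ versus $C^{-1}r$ and yields only implicit constants. The paper instead works directly with the square-root form: for $B(y,r)\subset Box(y,r)$ it squares the inequality $\sqrt{x_1^2+y_1^2+4|x_2-y_2|}<r+\sqrt{x_1^2+y_1^2}$ and uses $|x_1|<|y_1|+r$ to get $4|x_2-y_2|<3r^2+4r|y_1|$; for $Box(y,r)\subset B(y,3r)$ it completes the square, observing that $y_1^2+4r(r+|y_1|)=(|y_1|+2r)^2$, so that the radical difference becomes $\sqrt{x_1^2+(|y_1|+2r)^2}-\sqrt{x_1^2+y_1^2}\le 2r$ by the $1$-Lipschitz property of $t\mapsto\sqrt{x_1^2+t^2}$. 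This avoids any case analysis and gives the explicit constant $C=3$. Your route is perfectly sound and arguably more systematic---the rationalised form makes the link with $G_2(y,\cdot)$ transparent---but the paper's completing-the-square trick is slicker here.
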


 \begin{proof}
 If $x\in B(y,r)$ then $|x_1-y_1|<r$ and $\sqrt{x_1^2+y_1^2+4|x_2-y_2|}<r+\sqrt{x_1^2+y_1^2}.$
 In particular, 
\[
\begin{split}
 4|x_2-y_2| &<(r+\sqrt{x_1^2+y_1^2})^2-(x_1^2+y_1^2)=r^2+2r\sqrt{x_1^2+y_1^2}\\
 & \leq r^2+2r\sqrt{(|y_1|+r)^2+y_1^2}\leq r^2+2r(2|y_1|+r)=3r^2+4r|y_1|.
 \end{split}
\]
Hence, $x\in Box (y,r).$

Vice versa, if $x\in Box (y,r),$ then $|x_1-y_1|<r$ and $|x_2-y_2|<r(r+|y_1|).$ We have
\[
\begin{split}
|x_1-y_1|+ \sqrt{x_1^2+y_1^2+4|x_2-y_2|}-\sqrt{x_1^2+y_1^2} &<r+ \sqrt{x_1^2+y_1^2+4r(r+|y_1|)}-\sqrt{x_1^2+y_1^2}\\
&=r+\sqrt{x_1^2+(|y_1|+2r)^2}-\sqrt{x_1^2+y_1^2}\\
&\leq r+2r.
 \end{split}
\]
Hence, $x\in B(y,3r).$
 \end{proof}

By Theorem \ref{charact} it is easy to recognize that
there exists a positive constant $C>0$ such that
\begin{equation}\label{homog2}
C^{-1}
r^2(r+|y_1|)
 \leq |B(y,r)|\leq  C
r^2(r+|y_1|)
 \end{equation}
 for every $y\in \R^2$ and $r>0.$

Moreover, there exists a positive constant $C_D$ such that the following Doubling Property holds true
\begin{equation}\label{dconstant}
 0<|B(x,2r)|\leq C_D|B(x,r)|, \quad \forall x \in \R^2, \forall r>0.
\end{equation}
 
The quasi metric balls $B$ satisfy the following Ring Condition, which will be crucial in Section \ref{dbp}.
 \begin{theorem}[Ring Condition]\label{ringc}
 There exists a nonnegative function $\omega,$ such that 
 \begin{itemize}
 \item
$\left|B(x,r)\setminus B(x,(1-\e)r)\right|\leq \omega(\e)|B(x,r)|,$ for every ball $B(x,r)$ and for all small $\e>0.$ 
 \item
 $\omega(\e)=O(\epsilon)$ as $\e \rightarrow 0^+$
\end{itemize}
\end{theorem}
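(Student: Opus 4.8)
The plan is to reduce the ring estimate to an elementary computation using the box description of the balls $B(x,r)$ from Theorem \ref{charact} (the I Structure Theorem), together with the explicit volume comparison \eqref{homog2}. First I would record that, by Theorem \ref{charact}, there is a universal $C>0$ with
\[
Box(x,C^{-1}r)\subset B(x,r)\subset Box(x,Cr)
\]
for all $x$ and $r>0$; consequently
\[
B(x,r)\setminus B(x,(1-\e)r)\subset Box(x,Cr)\setminus Box(x,C^{-1}(1-\e)r).
\]
Now the right-hand side is a genuine (axis-parallel) rectangle difference, so its measure is easy to bound: writing $a_j(s)=F_j(x,s)$ one has $|Box(x,s)|=4F_1(x,s)F_2(x,s)$, and the measure of the annular region between two concentric boxes is a telescoping product that I would expand and keep only the terms that are $O(\e)$ times the volume of the outer box.

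The one genuine subtlety is that the inner and outer boxes in this sandwich are scaled by the two different universal constants $C$ and $C^{-1}$, not by factors close to $1$; naively this would only give $|B(x,r)\setminus B(x,(1-\e)r)|\le \mathrm{const}\cdot|B(x,r)|$ with a constant bounded away from $0$, which is not enough — we need the bound to vanish as $\e\to0$. To fix this I would not use the crude box sandwich directly but instead exploit the scaling structure: the map $\delta_t$ in \eqref{dil} and the $\delta$-homogeneity of $\tilde d$ (which follows from the explicit formula for $\tilde d$, since replacing $(x_1,x_2),(y_1,y_2)$ by $(tx_1,t^2x_2),(ty_1,t^2y_2)$ multiplies $\tilde d$ by $t$) give $B(\delta_t x,tr)=\delta_t B(x,r)$, hence $|B(x,r)\setminus B(x,(1-\e)r)|$ is governed, after rescaling to $r=1$, only by the center $x$. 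For $r=1$ the set $B(x,1)\setminus B(x,1-\e)$ sits between two boxes whose side-lengths in the $x_1$-direction differ by $O(\e)$ and in the $x_2$-direction by $O(\e)(|x_1|+1)$ — here one uses that $F_j(x,1-\e)=F_j(x,1)-O(\e)(|x_1|+1)$ directly from the formulas $F_1(x,s)=s$, $F_2(x,s)=s(|x_1|+s)$ — and then $|B(x,1)\setminus B(x,1-\e)|\le C\big(\e(|x_1|+1)+\e(|x_1|+1)\big)=C\e(|x_1|+1)$, which is exactly $O(\e)$ times $|B(x,1)|\asymp(|x_1|+1)$ by \eqref{homog2}.

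Putting these together: define $\omega(\e)$ as the supremum over all centers $x\in\R^2$ and all radii $r>0$ of the ratio $|B(x,r)\setminus B(x,(1-\e)r)|/|B(x,r)|$; by the dilation invariance this supremum is attained (up to the comparability constants) at $r=1$, and the $r=1$ estimate of the previous paragraph shows $\omega(\e)\le C\e$. Both bullet points of Theorem \ref{ringc} then hold: the inequality by definition of $\omega$, and $\omega(\e)=O(\e)$ from the linear bound. The main obstacle, as indicated, is organizing the argument so that the universal constants from the box sandwich do not spoil the smallness as $\e\to0$; the dilation reduction to $r=1$ followed by the explicit first-order expansion of $F_1,F_2$ in $\e$ is what makes this work, and after that reduction the computation is entirely elementary.
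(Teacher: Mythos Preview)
Your argument has a genuine gap. The dilation reduction to $r=1$ is correct (indeed $\tilde d(\delta_t x,\delta_t y)=t\,\tilde d(x,y)$, so the ratio $|B(x,r)\setminus B(x,(1-\e)r)|/|B(x,r)|$ is dilation-invariant), but it does not resolve the obstacle you yourself flagged. After setting $r=1$, the box sandwich from Theorem~\ref{charact} still only gives $Box(x,C^{-1}(1-\e))\subset B(x,1-\e)$ and $B(x,1)\subset Box(x,C)$, so the annulus is trapped in $Box(x,C)\setminus Box(x,C^{-1}(1-\e))$, whose side-lengths differ by roughly $C-C^{-1}$, a fixed positive number independent of $\e$. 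Your assertion that ``$B(x,1)\setminus B(x,1-\e)$ sits between two boxes whose side-lengths \ldots\ differ by $O(\e)$'' is therefore unjustified: the first-order expansion $F_j(x,1-\e)=F_j(x,1)-O(\e)(|x_1|+1)$ is a statement about \emph{boxes} at nearby radii, but the structure theorem only relates balls to boxes up to the fixed multiplicative constant $C$, and dilating in $r$ does nothing to remove that constant.

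The paper avoids the box comparison altogether and computes $|B(y,r)|$ exactly. Using Fubini and the explicit formula for $\tilde d$, one finds
\[
f(r):=|B(y,r)|=\tfrac{1}{2}\int_{-r}^{r}\Big((r-|t|)^2+2(r-|t|)\sqrt{(y_1+t)^2+y_1^2}\Big)\,dt,
\]
which is differentiable in $r$ with $0\le f'(r)\le 4r(r+|y_1|)$. The Mean Value Theorem then gives $|B(y,r)\setminus B(y,(1-\e)r)|=f(r)-f((1-\e)r)\le 4\e\,r^2(r+|y_1|)\le c\e\,|B(y,r)|$ by \eqref{homog2}. The essential point is that one needs an \emph{exact} (or at least first-order accurate in $r$) expression for the ball volume; mere two-sided comparability with boxes is too coarse to detect the $O(\e)$ decay of the ring.
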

\begin{proof}
For every fixed $y\in \R^2$ and for every $r>0,$ by Fubini's Theorem  we have 
\[
\begin{split}
f(r):=&\int_{B(y,r)}dx=\int_{y_1-r}^{y_1+r}\left(\int_{4|x_2-y_2|<\left(r-|x_1-y_1|+\sqrt{x_1^2+y_1^2}\right)^2-(x_1^2+y_1^2)}dx_2
\right)dx_1\\
=&\frac{1}{2}\int_{-r}^r\left((r-|t|)^2+2(r-|t|)\sqrt{(y_1+t)^2+y_1^2}\right)dt.
\end{split}
\]
Remark that the function $f$ is differentiable  at any point and 
\[
\begin{split}
f'(r)
= &\int_{-r}^r\left((r-|t|)+\sqrt{(y_1+t)^2+y_1^2}\right)dt=r^2+\int_{-r}^r\sqrt{(y_1+t)^2+y_1^2} \,dt,\\
0\leq f'(r)\leq &r^2+4|y_1|r+r^2\leq 4r(r+|y_1|).
\end{split}
\]

In particular, by Lagrange mean value Theorem, we have that there exists $\theta\in (1-\e,1)$ such that
\[
{\left|B(y,r)\setminus B(y,(1-\e)r)\right|}=f(r)-f((1-\e)r)=f'(\theta r) \e r\leq 4\e r^2(r+|y_1|)\leq c\e |B(y,r)|,
\]
where $c$ is a positive universal constant because of \eqref{homog2}.
\end{proof}
 In the sequel we will also need the following characterization of $Box(y,r)$ and consequently of
 $B(y,r)$ in term of the function
\begin{equation}\label{rho}
\rho({x},{y})= \left((x_1^2-y_1^2)^2+4(x_2-y_2)^2\right)^{1/4}.
\end{equation}

 It is well known that the function $\G(x,0):=\rho^{2-Q}(x,0)$ is the fundamental solution
with pole at the origin of the subelliptic Laplacian $X_1^2+X_2^2,$
where $Q=3$ is the homogeneous dimension.

For every fixed $r>0$ and $y=(y_1,y_2)\in \R^2$  define
 \begin{equation}\label{tg}
\tilde g_r(x,y)=\left\{
\begin{array}{cc}
  \rho(x,y), &   \textrm{if }\, |y_1|<r   \\
 \frac{1}{|y_1|}\rho^2(x,y), &  \textrm{if }\, |y_1|\geq r \\
  \end{array}
 \right.
 \end{equation}
 \begin{remark}\label{rem}
 The function $\tilde g_r(x,y)$ has two zeros at $(y_1,y_2)$ and at $(-y_1,y_2)$ and it is an even function with respect to the first variable $x_1.$
 \end{remark}
 In the sequel we will study the sublevel sets of the function $\tilde g_r(\cdot,y)$
 \[
 \tilde G(y,r):=\{
 x\in \R^2: \tilde g_r(x,y)<r
 \}.
 \]

In order to avoid two zeros, 
 for every fixed $r>0$ we also define
 \begin{equation}\label{g}
 g_r(x,y)=\left\{
\begin{array}{cc}
  \rho(x,y), &   \textrm{if }\, |y_1|<r   \\
 \frac{1}{|y_1|}\rho^2(x,y), &  \textrm{if }\, |y_1|\geq r \, \textrm{and }\, x_1 y_1\geq 0\\
+ \infty, &  \textrm{if }\, |y_1|\geq r \, \textrm{and }\, x_1 y_1<
0
  \end{array}
 \right.
 \end{equation}
 In the following Theorem, which is the main result of this section, we compare the sublevel sets of the function $g_r(\cdot,y)$
 \[
 G(y,r):=\{
 x\in \R^2: g_r(x,y)<r
 \}
 \]
  with the $Box(y,r).$
 \begin{theorem}[II Structure Theorem]\label{equiv}
 There exists a constant $C > 0$ such that:
\begin{equation}\label{inclusion}
   Box(y,C^{-1}r) \subset G(y,r) \subset Box(y,Cr), y \in \R^2, r \in ]0,+\infty[
 \end{equation}
\end{theorem}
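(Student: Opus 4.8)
The plan is to treat the two regimes $|y_1|<r$ and $|y_1|\ge r$ separately, since in each of them $g_r(\cdot,y)$ is (on the region that matters) a power of $\rho(\cdot,y)$, and the sublevel set $G(y,r)$ can be rewritten explicitly in terms of the two quantities $x_1^2-y_1^2$ and $x_2-y_2$. Concretely: if $|y_1|<r$ then $G(y,r)=\{\rho(x,y)<r\}=\{(x_1^2-y_1^2)^2+4(x_2-y_2)^2<r^4\}$, whereas if $|y_1|\ge r$ then $G(y,r)=\{x_1y_1\ge 0\}\cap\{(x_1^2-y_1^2)^2+4(x_2-y_2)^2<r^2y_1^2\}$. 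The algebraic device that lets one pass between ``box coordinates'' and ``$\rho$ coordinates'' in both directions is the factorization $x_1^2-y_1^2=(x_1-y_1)(x_1+y_1)$.

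For the inclusion $G(y,r)\subset Box(y,Cr)$: in the first regime the defining inequality forces $|x_1^2-y_1^2|<r^2$, hence $x_1^2<y_1^2+r^2<2r^2$, so $|x_1-y_1|\le|x_1|+|y_1|<(1+\sqrt2)r$, while $|x_2-y_2|<r^2/2<r(|y_1|+r)$; thus $x\in Box(y,(1+\sqrt2)r)$. In the second regime $x_1y_1\ge0$ gives $|x_1+y_1|\ge|y_1|$, hence $|x_1-y_1|=|x_1^2-y_1^2|/|x_1+y_1|<r|y_1|/|y_1|=r$, and $|x_2-y_2|<r|y_1|/2\le r(|y_1|+r)$, so $x\in Box(y,r)$. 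Taking $C=1+\sqrt2$ covers both.

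For the reverse inclusion $Box(y,C^{-1}r)\subset G(y,r)$, fix $\e\in(0,1)$ and take $x\in Box(y,\e r)$, i.e. $|x_1-y_1|<\e r$ and $|x_2-y_2|<\e r(|y_1|+\e r)$. When $|y_1|\ge r$ one first notes that the cutoff condition $x_1y_1\ge 0$ in the definition of $g_r$ is automatic: indeed $|x_1-y_1|<\e r\le\e|y_1|<|y_1|$, which keeps $x_1$ on the same side of the origin as $y_1$. Next, writing $M=r$ if $|y_1|<r$ and $M=|y_1|$ if $|y_1|\ge r$, one has $|x_1|<(1+\e)M$, hence $|x_1+y_1|<(2+\e)M$, so $|x_1^2-y_1^2|<\e r(2+\e)M$, and likewise $|x_2-y_2|<\e r(1+\e)M$. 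Therefore $\rho^4(x,y)<\e^2 r^2 M^2\big[(2+\e)^2+4(1+\e)^2\big]\le 25\,\e^2 r^2 M^2$ for $\e\le 1$. In the first regime ($M=r$) this gives $\rho(x,y)<\sqrt{5\e}\,r$, and in the second ($M=|y_1|$) it gives $\rho^2(x,y)<5\e r|y_1|$, i.e. $\tfrac1{|y_1|}\rho^2(x,y)<5\e r$. Choosing $\e=1/5$ makes both right-hand sides equal to $r$, so $Box(y,r/5)\subset G(y,r)$, and $C=5$ works for the whole statement.

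The argument is essentially bookkeeping; the only genuinely delicate point is making sure that in the regime $|y_1|\ge r$ the artificial branch $x_1y_1<0\Rightarrow g_r=+\infty$ does not spoil the inner inclusion, which is exactly the observation above that a small enough box around $y$ stays in the half-plane $\{x_1y_1>0\}$ because its $x_1$-width $\e r$ is dominated by $|y_1|$. It is worth remarking that the quadratic normalization $\rho^2/|y_1|$ chosen in this regime (rather than $\rho$ itself) is precisely what turns the estimate $\rho^2\lesssim \e r|y_1|$ into $g_r\lesssim \e r$, matching the elliptic-type size $r^2(|y_1|+r)\asymp r^2|y_1|$ of $Box(y,r)$ when $r\le|y_1|$.
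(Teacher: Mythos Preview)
Your proof is correct and follows essentially the same route as the paper: split into the two regimes $|y_1|<r$ and $|y_1|\ge r$, use the factorization $x_1^2-y_1^2=(x_1-y_1)(x_1+y_1)$, and in the second regime exploit $x_1y_1\ge 0$ to get $|x_1+y_1|\ge|y_1|$; both arguments land on the same constant $C^{-1}=1/5$ for the inner inclusion. Your handling of the outer inclusion in the regime $|y_1|<r$ is slightly slicker than the paper's (you use $x_1^2<2r^2$ and the triangle inequality directly, obtaining $C=1+\sqrt2$, whereas the paper does a sub-case split on whether $|x_1-y_1|\le 2|y_1|$ and gets $C=3$), and your introduction of $M=\max(r,|y_1|)$ unifies the bookkeeping for the inner inclusion nicely, but these are cosmetic differences rather than a different strategy.
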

\begin{proof}
To prove the first inclusion in \eqref{inclusion}, take $x\in
Box(y,r/5)$ and assume $|y_1|<r,$ then
\[
|x_1-y_1|<r/5 , \quad |x_2-y_2|<r/5(r/5+|y_1|)<1/5(1/5+1)r^2
\]
and we get
\[
\begin{split}
g_r^4(x,y)&=\rho^4(x,y)=(x_1-y_1)^2(x_1+y_1)^2+4(x_2-y_2)^2\\
&<(r/5)^2(|x_1|+|y_1|)^2+4r^4(1/5)^2(1/5+1)^2\\
&\leq (r/5)^2(|x_1-y_1|+2|y_1|)^2+4r^4(1/5)^2(1/5+1)^2\\
&< r^4(1/5)^2(1/5+2)^2+4r^4(1/5)^2(1/5+1)^2\\
&<r^4(1/5)(1/5+2)^2=r^4 (121/125)<r^4
\end{split}
\]
If $x\in Box(y,r/5)$ and $|y_1|\geq r,$ then
\[
|x_1-y_1|<r/5 , \quad |x_2-y_2|<r/5(r/5+|y_1|)\leq 1/5(1/5+1)r |y_1|
\]
and we get $x_1y_1>0$ and
\[
\begin{split}
g_r^2(x,y)|y_1|^2&=\rho^4(x,y)=(x_1-y_1)^2(x_1+y_1)^2+4(x_2-y_2)^2\\
&<(r/5)^2(|x_1|+|y_1|)^2+4r^2|y_1|^2(1/5)^2(1/5+1)^2\\
&\leq (r/5)^2(|x_1-y_1|+2|y_1|)^2+4r^2|y_1|^2(1/5)^2(1/5+1)^2\\
&<r^2|y_1|^2(1/5)^2(1/5+2)^2+4r^2|y_1|^2(1/5)^2(1/5+1)^2\\
&<r^2|y_1|^2(1/5)(1/5+2)^2=r^2|y_1|^2(121/125)<r^2|y_1|^2\end{split}
\]
In particular, we have proved that
\[
Box(y,r/5) \subset G(y,r).
\]
To prove the second inclusion in \eqref{inclusion} take $x\in
G(y,r)$ and  assume $|y_1|<r,$ then
\[
|x_1^2-y_1^2|<r^2, \quad 4|x_2-y_2|^2<r^2.
\]
In particular
\[
|x_2-y_2|<\frac{r^2}{2}<r(r+|y_1|),
\]
and
\begin{equation}\label{star}
r^2>|x_1-y_1||x_1+y_1|\geq |x_1-y_1|\left| |x_1-y_1|-2|y_1| \right|.
\end{equation}
We have two cases
\begin{itemize}
  \item If $|x_1-y_1|\leq 2|y_1|,$ then $x\in Box(y,2r).$
  \item If $|x_1-y_1|>2|y_1|,$ then by \eqref{star}
  \[
|x_1-y_1|<\sqrt{|y_1|^2+r^2}+|y_1|<3r
  \]
  and $x\in Box(y,3r).$
\end{itemize}
If $x\in G(y,r)$ and $|y_1|\geq r$ then $x_1y_1\geq 0$
\[
|x_1^2-y_1^2|<r|y_1|, \quad 4|x_2-y_2|^2<r^2|y_1|^2.
\]
Thus, we have
\[
|x_2-y_2|<\frac{r|y_1|}{2}<r(r+|y_1|),
\]
and, since $x_1y_1\geq 0$
\[
r|y_1|>|x_1-y_1||x_1+y_1|\geq |x_1-y_1||y_1|.
\]
Hence $x\in Box(y,r).$

In particular, we have proved that
\[
G(y,r)\subset Box(y,3r).
\]
\end{proof}

\section{Barriers}\label{sbarrier}
Let $\rho$ be the function in \eqref{rho}.
\begin{lemma}\label{Lbarrier}
Let $y=(y_1,0)\in \R^2$ be fixed and consider the function $\phi(x)=\rho^\a(x,y).$ For $\a\leq 2-3 \L/\l,$ $\phi$ is a
classical solution of $L\phi\geq 0$ in the set $\{ \rho>0\}.$
\end{lemma}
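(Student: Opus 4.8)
The plan is to straighten out $L$ by passing to the variables in which $\rho$ becomes radial. Since both $\rho(\cdot ,y)$ (with $y=(y_1,0)$) and the operator $L$ are invariant under $x_1\mapsto -x_1$, and $\phi$ is smooth on $\{\rho>0\}$, it suffices to prove $L\phi\ge 0$ on $\{x_1>0\}\cap\{\rho>0\}$ (the slice $\{x_1=0\}$ is handled at the end). On $\{x_1>0\}$ set $s=x_1^2-y_1^2$, $t=2x_2$, so that $\rho^4=s^2+t^2$; write $r=(s^2+t^2)^{1/2}$ and $\nu=\a/2$, hence $\phi=\rho^\a=r^{\nu}$. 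Since $\a\le 2-3\L/\l\le -1$ we have $\nu<-\tfrac12$, in particular $\nu-2<\nu-1<0$. Regarding $x_1=(s+y_1^2)^{1/2}$ as a function of $s$ alone, $\p_s x_1=1/(2x_1)$ and $\p_t x_1=0$; hence, as operators on functions of $(s,t)$,
\[
X_1=2x_1\p_s,\qquad X_2=2x_1\p_t,
\]
from which $X_1^2=4x_1^2\p_s^2+2\p_s$, $X_2X_1=4x_1^2\p_s\p_t$, $X_2^2=4x_1^2\p_t^2$, and therefore
\[
L=4x_1^2L_0+2a_{11}\p_s,\qquad L_0:=a_{11}\p_s^2+2a_{12}\p_s\p_t+a_{22}\p_t^2.
\]

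Next I evaluate $L$ on the radial profile $\phi=r^{\nu}$. A direct computation gives $\p_s\phi=\nu s\,r^{\nu-2}$ and, writing $Q_A(s,t):=a_{11}s^2+2a_{12}st+a_{22}t^2$ (so that $\l r^2\le Q_A\le \L r^2$ by \eqref{ell}),
\[
L_0\phi=\nu\,r^{\nu-4}\bigl[(\nu-2)Q_A+r^2\,\trace A\bigr].
\]
Hence
\[
L\phi=\nu\,r^{\nu-4}\Bigl[\,4x_1^2\bigl((\nu-2)Q_A+r^2\,\trace A\bigr)+2a_{11}s\,r^2\,\Bigr]=:\nu\,r^{\nu-4}\,\Psi .
\]
Since $\nu<0$ and $r>0$ on $\{\rho>0\}$, it remains to prove $\Psi\le 0$.

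The crucial point is to estimate $(\nu-2)Q_A+r^2\trace A$ without wasting the trace. Using the identity $r^2\trace A=Q_A(s,t)+Q_A(t,-s)$ (expand the right side; $(t,-s)$ has length $r$), together with $\nu-1<0$, $Q_A(s,t)\ge\l r^2$ and $Q_A(t,-s)\le\L r^2$,
\[
(\nu-2)Q_A+r^2\,\trace A=(\nu-1)Q_A(s,t)+Q_A(t,-s)\le\bigl[(\nu-1)\l+\L\bigr]r^2\le 0,
\]
the last inequality being exactly the hypothesis: $\a\le 2-3\L/\l$ means $\nu-1\le -\tfrac{3\L}{2\l}$, so $(\nu-1)\l+\L\le-\tfrac{\L}{2}$. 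Call this quantity $Z$, so $Z\le[(\nu-1)\l+\L]r^2\le 0$ throughout $\{\rho>0\}$. Now split on the sign of $s$. If $s\le 0$, then $4x_1^2Z\le 0$ and $2a_{11}sr^2\le 0$, so $\Psi\le 0$. If $s>0$, then $0<s\le x_1^2$ (because $s=x_1^2-y_1^2$ and $y_1^2\ge0$), so $4x_1^2Z\le 4sZ\le 4s[(\nu-1)\l+\L]r^2$, whence
\[
\Psi\le s\,r^2\bigl(4(\nu-1)\l+4\L+2a_{11}\bigr)\le s\,r^2\bigl(4(\nu-1)\l+6\L\bigr)\le 0,
\]
using $a_{11}\le\L$ and then $4(\nu-1)\l+6\L\le 0$, again by the hypothesis. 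Finally, at a point $(0,x_2)$ with $\rho(0,x_2)>0$ one has $X_2\phi=X_2X_1\phi=X_2^2\phi=0$, so
\[
L\phi(0,x_2)=a_{11}\,\p_{x_1}^2\phi(0,x_2)=-\a\,a_{11}\,y_1^2\,\bigl(\rho(0,x_2)^4\bigr)^{\a/4-1}\ge 0
\]
since $\a<0$; this completes the proof.

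The step I expect to be the real obstacle is the trace identity $r^2\trace A=Q_A(s,t)+Q_A(t,-s)$, i.e. recognizing that $r^2\trace A$ must be split between the two quadratic forms rather than bounded by $\trace A\le 2\L$: the crude bound only yields the weaker threshold $\a\le 4-5\L/\l$. Everything else — the operator identities in the $(s,t)$ variables, the computation of $L_0 r^\nu$, and the two sign cases for $s$ — is routine bookkeeping.
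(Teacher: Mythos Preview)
Your argument is correct and gives the sharp threshold $\a\le 2-3\L/\l$, but it takes a different route from the paper. One small correction: the claim that ``the operator $L$ is invariant under $x_1\mapsto -x_1$'' is not true in general, since the coefficients $a_{ij}$ are arbitrary measurable functions subject only to \eqref{ell}. Fortunately you never actually use this: every step of your computation on $\{x_1>0\}$ --- the identities $X_1=2x_1\p_s$, $X_2=2x_1\p_t$, the formula $X_1^2=4x_1^2\p_s^2+2\p_s$, and the inequality $s\le x_1^2$ --- holds verbatim on $\{x_1<0\}$ as well (indeed $\p_s x_1=1/(2x_1)$ is valid for either sign, since $x_1^2=s+y_1^2$). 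So simply replace ``on $\{x_1>0\}$'' by ``on $\{x_1\neq 0\}$'' and drop the symmetry remark; the boundary case $x_1=0$ you handle directly.

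The paper instead computes $L\phi$ by brute force in the original variables $(x_1,x_2)$: it writes out $\rho_{x_1},\rho_{x_2},\rho_{x_ix_j}$ explicitly, substitutes, and then regroups the resulting expression into two pieces, one carrying the factor $\a(\a-1)$ and containing the quadratic form $a_{11}(x_1^2-y_1^2)^2+4a_{12}x_2(x_1^2-y_1^2)+4a_{22}x_2^2$ (bounded below by $\l\rho^4$), and a remainder carrying the factor $\a$ in which a second quadratic form appears (bounded above by $3\L x_1^2\rho^4$) together with the nonnegative term $-\a a_{11}y_1^2\rho^{\a-4}$. Your change of variables $(s,t)=(x_1^2-y_1^2,2x_2)$ makes $\rho$ radial and reduces $L$ to $4x_1^2L_0+2a_{11}\p_s$, so the whole calculation becomes the standard Hessian of $r^{\nu}$; your trace identity $r^2\trace A=Q_A(s,t)+Q_A(t,-s)$ plays exactly the role of the paper's second regrouping and is what recovers the sharp constant. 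Both proofs are short; yours is more conceptual and explains why the threshold involves $3\L/\l$ rather than something cruder, while the paper's avoids introducing new coordinates.
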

\begin{proof}
By
a simple calculation we get
\begin{equation}\label{LphiII}
\begin{split}
L\phi =& a_{11} \p_{x_1}(\a \rho^{\a-1}\rho_{x_1})+2a_{12}{x_1}\p_{x_2}(\a \rho^{\a-1}\rho_{x_1})+a_{22}x_1^2\p_{x_2}(\a \rho^{\a-1}\rho_{x_2})\\
=& a_{11}\left( \a(\a-1)\rho^{\a-2}\rho_{x_1}^2+\a
\rho^{\a-1}\rho_{{x_1}{x_1}}\right)
+2a_{12}{x_1}\left( \a(\a-1)\rho^{\a-2}\rho_{x_1}\rho_{x_2}+\a \rho^{\a-1}\rho_{{x_1}{x_2}}\right)\\
&+a_{22}x_1^2\left( \a(\a-1)\rho^{\a-2}\rho_{x_2}^2+\a
\rho^{\a-1}\rho_{{x_2}{x_2}}\right)
\end{split}
\end{equation}
and
\begin{equation}\label{rhoII}
\begin{split}
\rho_{x_1}=& \rho^{-3}(x_1^2-y_1^2)x_1, \\
\rho_{x_2}=& 2\rho^{-3}{x_2},\\
\rho_{{x_1}{x_1}}=&(3x_1^2-y_1^2)\rho^{-3}-3\rho^{-7}x_1^2(x_1^2-y_1^2)^2= \rho^{-7}(12x_2^2x_1^2-y_1^2\rho^4)\\
\rho_{{x_1}{x_2}}=&\rho^{-7}(-6{x_2}x_1(x_1^2-y_1^2))\\
\rho_{{x_2}{x_2}}=&2\rho^{-3}-12\rho^{-7}x_2^2=\rho^{-7}(2(x_1^2-y_1^2)^2-4x_2^2)=\rho^{-7}(3(x_1^2-y_1^2)^2-\rho^4)\\
\end{split}
\end{equation}
By substituting \eqref{rhoII} in \eqref{LphiII} and by taking into
account \eqref{ell}  and that $\a\leq -1$ is negative and $a_{11}, a_{22}$ are
positive,  we get
\begin{equation}\label{Lphi2II}
\begin{split}
L\phi =&\a (\a-1) x_1^2 \rho^{\a-8}\left(
a_{11} (x_1^2-y_1^2)^2+4a_{12}{x_2}(x_1^2-y_1^2)+4a_{22}x_2^2 \right)\\
& +\alpha \rho^{\a-8}\left(
a_{11} (12 x_1^2 x_2^2-y_1^2\rho^4)
 -12a_{12}{x_2}x_1^2(x_1^2-y_1^2)
 +a_{22}x_1^2(3(x_1^2-y_1^2)^2-\rho^4)
\right)\\
\geq & \a (\a-1) x_1^2 \rho^{\a-8}\left(
\l \rho^4 \right) -\alpha a_{11} \rho^{\a-4}y_1^2
 \\
& +\alpha \rho^{\a-8}\left(
 a_{11} (12 x_1^2 x_2^2)
 -12a_{12}{x_2}x_1^2(x_1^2-y_1^2)
 +a_{22}x_1^2(3(x_1^2-y_1^2)^2-\rho^4)
\right)\\
\geq & -\alpha a_{11} \rho^{\a-4}y_1^2+ \a x_1^2 \rho^{\a-4}\left((\a-2)  \l +3\L \right)\geq -\alpha a_{11} \rho^{\a-4}y_1^2 \geq 0.
\end{split}
\end{equation}
\end{proof}

\begin{lemma}\label{lemma4.1}
There exists positive universal constants $C>0$ and $M>1$ such that for every $y=(y_1,0)\in \R^2$ and $r\in \R^+$ there is a $C^2$ function $\tilde \varphi: \R^2 \rightarrow \R$ such that
\begin{equation}\label{4.1}
\tilde \varphi \geq 0,\,  \textrm{on} \, \R^2 \setminus \tilde G(y,2r),
\end{equation}
\begin{equation}\label{4.2}
\tilde \varphi\leq -2, \,  \textrm{in} \,  \, \tilde G(y,r)
\end{equation}

\begin{equation}\label{4.3}
\tilde \varphi \geq -M, \quad L\tilde \varphi(x)\leq C \frac{x_1^2}{r^2(r+|y_1|)^2}\,\zeta(x), \,
\textrm{in} \, \R^2
\end{equation}
where $0\leq \zeta\leq 1$ is a continuous function in $\R^2$ with $supp \, \zeta \subset \overline{\tilde G(y,r)}.$

\end{lemma}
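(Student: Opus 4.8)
The plan is to obtain $\tilde\varphi$ by truncating a negative power of $\rho(\cdot,y)$ near its zero set, in the spirit of the classical Krylov--Safonov barrier. Throughout write $\rho=\rho(\cdot,y)$ and $w(x)=\rho^4(x,y)=(x_1^2-y_1^2)^2+4x_2^2$, so that $\rho^\a=w^{\a/4}$ and $\{\rho=0\}=\{(\pm y_1,0)\}$. First I would read off, directly from the definition of $\tilde g_r$, that $\tilde G(y,r)=\{\rho<R_1\}$ and $\tilde G(y,2r)=\{\rho<R_2\}$, where $R_1=r,\ R_2=2r$ if $|y_1|<r$; $R_1=\sqrt{r|y_1|},\ R_2=2r$ if $r\le|y_1|<2r$; and $R_1=\sqrt{r|y_1|},\ R_2=\sqrt{2r|y_1|}$ if $|y_1|\ge2r$. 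In each case $r\le R_1<R_2$, $\sqrt2\le R_2/R_1\le2$, and $\tfrac12\,r(r+|y_1|)\le R_1^2\le r(r+|y_1|)$, which reduces the problem, up to constants depending only on $\l,\L$, to a single pair of $\rho$-sublevel sets of comparable radii.

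Next, set $\a=2-3\L/\l\le-1$, so Lemma \ref{Lbarrier} gives $L(\rho^\a)\ge0$ on $\{\rho>0\}$. Let $c_0=1-2^{\a/2}>0$, $A=-\tfrac2{c_0}R_1^{-\a}<0$, $B=|A|R_2^\a>0$, and consider $\psi=A\rho^\a+B=A(\rho^\a-R_2^\a)$. As $L$ annihilates constants, $L\psi=A\,L(\rho^\a)\le0$ on $\{\rho>0\}$; moreover $\psi$ is increasing in $\rho$ and vanishes on $\{\rho=R_2\}$, so $\psi\ge0$ on $\R^2\setminus\tilde G(y,2r)$; and on $\tilde G(y,r)$ one has $\rho^\a-R_2^\a\ge R_1^\a-R_2^\a=R_1^\a\big(1-(R_2/R_1)^\a\big)\ge c_0R_1^\a$, hence $\psi\le Ac_0R_1^\a=-2$. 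The only defect of $\psi$ is the blow-up at the two poles, so I would smooth it there: fix a small absolute $\e\in(0,2^{-1/4})$, put $\delta=(\e R_1)^4$, and pick $h\in C^2([0,\infty))$ with $h(w)=w^{\a/4}$ for $w\ge\delta$ and $h(w)=\delta^{\a/4}g(w/\delta)$ for $w\le\delta$, where $g:[0,1]\to[1,2]$ is a fixed decreasing $C^2$ function matching $t^{\a/4}$ to second order at $t=1$. Then $\tilde\varphi:=A\,h(w)+B\in C^2(\R^2)$ coincides with $\psi$ on $\{w\ge\delta\}\supset\R^2\setminus\tilde G(y,2r)$, giving \eqref{4.1}; on $\tilde G(y,r)$ one still has $h(w)\ge h(\delta)=(\e R_1)^\a\ge R_1^\a$, so $\tilde\varphi\le Ac_0R_1^\a=-2$, giving \eqref{4.2}; and $h\le h(0)\le2(\e R_1)^\a$ together with $|A|(\e R_1)^\a=\tfrac2{c_0}\e^\a$ yields $\tilde\varphi\ge -M$ with $M=\tfrac{4\e^\a}{c_0}+1>1$ depending only on $\l,\L$.

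It remains to control $L\tilde\varphi$. On $\{w>\delta\}$ we have $L\tilde\varphi=A\,L(\rho^\a)\le0$ by Lemma \ref{Lbarrier}. On $\{w<\delta\}$ I would expand $L\tilde\varphi$ by the chain rule, using $\p_{x_1}w=4x_1(x_1^2-y_1^2)$, $\p_{x_1x_1}w=12x_1^2-4y_1^2$, $\p_{x_2}w=8x_2$, $\p_{x_2x_2}w=8$, $\p_{x_1x_2}w=0$; collecting terms,
\[
L\tilde\varphi=A\,x_1^2\big(\tilde Q(x)\,h''(w)+\tilde P\,h'(w)\big)-4A\,a_{11}y_1^2\,h'(w),
\]
with $\tilde P=12a_{11}+8a_{22}$ and $\tilde Q(x)=16\big(a_{11}(x_1^2-y_1^2)^2+4a_{12}(x_1^2-y_1^2)x_2+4a_{22}x_2^2\big)$; applying \eqref{ell} to $\xi=(x_1^2-y_1^2,\,2x_2)$ gives $16\l\,w\le\tilde Q\le16\L\,w$, while $20\l\le\tilde P\le20\L$. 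Since $A<0$ and $h'\le0$ everywhere (on $[\delta,\infty)$ because $h=w^{\a/4}$, on $[0,\delta]$ by the choice of $g$), the $x_1^2$-free term $-4A\,a_{11}y_1^2h'$ is $\le0$; hence, using $w<\delta$, $|h'|\le\|g'\|_\infty\delta^{\a/4-1}$ and $|h''|\le\|g''\|_\infty\delta^{\a/4-2}$,
\[
L\tilde\varphi\le|A|\,x_1^2\big(\tilde Q|h''|+\tilde P|h'|\big)\le C_1\L\,|A|\,x_1^2\,\delta^{\a/4-1},
\]
and substituting $|A|=\tfrac2{c_0}R_1^{-\a}$, $\delta^{\a/4-1}=(\e R_1)^{\a-4}$ and $R_1^4\ge\tfrac14 r^2(r+|y_1|)^2$ gives $L\tilde\varphi\le C\,\frac{x_1^2}{r^2(r+|y_1|)^2}$ on $\{w<\delta\}$ with $C=C(\l,\L)$. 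Finally take $\zeta\in C(\R^2)$ with $0\le\zeta\le1$, $\zeta\equiv1$ on $\{w\le\delta\}$ and $\operatorname{supp}\zeta\subset\{w<2\delta\}\subset\tilde G(y,r)$; then \eqref{4.3} holds on $\{w<\delta\}$, where $\zeta=1$, and trivially on $\{w\ge\delta\}$, where the left-hand side is $\le0$.

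The main obstacle is this last step: the naive bound $|L\tilde\varphi|\lesssim\|D^2\tilde\varphi\|$ on the gluing region is too weak, since it loses exactly the $x_1^2$ weight required by the weighted ABP estimate \eqref{eqABP}; one genuinely needs the algebraic identity for $L(h\circ w)$, the ellipticity comparison $\tilde Q\asymp\rho^4$, and the favorable (nonpositive) sign of the single $x_1^2$-free term $-4A\,a_{11}y_1^2h'$, which holds precisely because $h$ is chosen decreasing. A minor, purely bookkeeping obstacle is the case analysis of the first paragraph identifying $\tilde G(y,r)$ and $\tilde G(y,2r)$ with $\rho$-sublevel sets of uniformly comparable radii.
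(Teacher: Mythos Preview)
Your argument is correct and follows the same blueprint as the paper's: set up the barrier $A\rho^{\alpha}+B$ calibrated on $\partial\tilde G(y,r)$ and $\partial\tilde G(y,2r)$, invoke Lemma~\ref{Lbarrier} for the supersolution property away from the poles, then smooth near $\{\rho=0\}$ and control $L\tilde\varphi$ there. The smoothing mechanism, however, is different. The paper composes an arctan-type truncation $h$ with the \emph{values} of $\varphi=M_1-M_2\rho^{\alpha}$, setting $\tilde\varphi=h(\varphi)$ with $h(t)=t$ for $t\ge-m$ and $h(t)=-m-\int_{t+m}^{0}(1+s^{2\beta})^{-1}\,ds$ for $t<-m$; it then estimates $L\tilde\varphi=h''(\varphi)\sum a_{ij}X_i\varphi\,X_j\varphi+h'(\varphi)L\varphi$ and tunes the integer $\beta$ so that the decay of $|h''(\varphi)|$ beats the blow-up of $|X\varphi|^2$. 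You instead compose with $w=\rho^{4}$, writing $\tilde\varphi=A\,h(w)+B$, and expand $L(h\circ w)$ directly; this makes the $x_1^{2}$-factorisation and the favourable sign of the single $x_1^{2}$-free term $-4A\,a_{11}y_1^{2}\,h'(w)$ completely explicit. Both routes deliver the same bound; yours avoids the $\beta$-tuning at the cost of manufacturing a fixed profile $g$ matching $t^{\alpha/4}$ to second order at $t=1$ while staying decreasing with range in $[1,2]$, which is elementary but forces $\|g'\|_{\infty},\|g''\|_{\infty}$ to depend on $\alpha$ (hence on $\Lambda/\lambda$), as you correctly allow. Your preliminary reduction identifying $\tilde G(y,r),\tilde G(y,2r)$ with $\rho$-sublevel sets of uniformly comparable radii $R_1,R_2$ replaces the paper's four-case computation of $M_1,M_2,m$ and is a clean simplification.

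One small slip: in verifying \eqref{4.2} you assert ``on $\tilde G(y,r)$ one still has $h(w)\ge h(\delta)$''. This is false for $w\in(\delta,R_1^{4})$, where $h(w)=w^{\alpha/4}<\delta^{\alpha/4}=h(\delta)$ since $\alpha<0$. What you actually need, and what does hold, is $h(w)\ge R_1^{\alpha}$ on all of $\{w<R_1^{4}\}$: for $w\ge\delta$ because $w\mapsto w^{\alpha/4}$ is decreasing, and for $w\le\delta$ because $g\ge1$ gives $h(w)\ge\delta^{\alpha/4}=(\varepsilon R_1)^{\alpha}\ge R_1^{\alpha}$. The remaining chain $\tilde\varphi\le A(R_1^{\alpha}-R_2^{\alpha})\le Ac_0R_1^{\alpha}=-2$ then goes through unchanged.
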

\begin{proof}
Consider the constant $\alpha={ 2-3 \L/\l}$ and recall that $\alpha \leq -1.$
We define
\[
\varphi(x) = M_1 - M_2 \rho^\alpha(x,y), \,\,  \textrm{in} \, \R^2 \setminus \{x: \rho(x,y)=0\}.
\]
We  choose $M_1,M_2$ such that
\[
\varphi |_{\p \tilde G(y,2r)} =0, \quad \varphi |_{\p \tilde G(y,r)} = -2
\]
Then  take $-m=\varphi|_{\p \tilde G(y,r/2)}.$
We now show that $M_1, M_2, m$ are positive. Indeed, if $|y_1|\geq 2r$ then 
\begin{equation}\label{c}
M_1=\frac{ 2 \cdot 2^{\alpha/2}}{1-2^{\alpha/2}}, \quad M_2=\frac{2}{(r|y_1|)^{\alpha/2}(1-2^{\alpha/2})}
\end{equation}
and
\[
-m=\frac{2}{1-2^{\alpha/2}}\left( 
2^{\alpha/2}-\frac{1}{2^{\alpha/2}}
\right).
\]
If $|y_1|<r/2$ then 
\begin{equation}\label{c2}
M_1=\frac{ 2 \cdot 2^{\alpha}}{1-2^{\alpha}}, \quad M_2=\frac{2}{r^{\alpha}(1-2^{\alpha})}
\end{equation}
and
\[
-m=\frac{2}{1-2^{\alpha}}\left( 
2^{\alpha}-\frac{1}{2^{\alpha}}
\right).
\]
If $r/2\leq |y_1|<r$ then 
\begin{equation}\label{c3}
M_1=\frac{ 2 \cdot 2^{\alpha}}{1-2^{\alpha}}, \quad M_2=\frac{2}{r^{\alpha}(1-2^{\alpha})}
\end{equation}
and
\[
-m=\frac{2}{1-2^{\alpha}}\left( 
2^{\alpha}-\left(\frac{|y_1|}{2r}\right)^{\alpha/2}
\right)
>\frac{2}{1-2^{\alpha}}\left( 
2^{\alpha}-\frac{1}{2^{\alpha/2}}
\right).
\]

If $r\leq |y_1|<2r$ then 
\begin{equation}\label{c4}
M_1=\frac{ 2 \cdot 2^{\alpha} r^{\alpha/2}}{|y_1|^{\alpha/2}-(4r)^{\alpha/2}}<
\frac{ 2 \cdot 2^{\alpha}}{2^{\alpha/2}-4^{\alpha/2}}
, \quad M_2=\frac{2}{r^{\alpha/2}(|y_1|^{\alpha/2}-(4r)^{\alpha/2})}
\end{equation}
and
\[
-m=-2\frac{((|y_1|/2)^{\alpha/2}-(4r)^{\alpha/2})}{(|y_1|^{\alpha/2}-(4r)^{\alpha/2})}
>
-2\frac{\left((1/2)^{\alpha/2}-4^{\alpha/2}\right)}{(2^{\alpha/2}-4^{\alpha/2})}
\]

Therefore $M_1$ and $m$ are uniformly bounded, while by \eqref{c}, \eqref{c2}, \eqref{c3}, \eqref{c4} we have
\begin{equation}\label{M2}
\frac{\tilde C}{r^{\alpha/2}(r+|y_1|)^{\alpha/2}}\leq M_2\leq \frac{C}{r^{\alpha/2}(r+|y_1|)^{\alpha/2}}.
\end{equation}
Define 
\[
h(t)=\left\{
\begin{array}{cc}
  t,  & \,\textrm{if} \, -m \leq t  \\
-\int_{t+m}^0\frac{1}{1+s^{2\beta}}ds-m,   &  \,\textrm{if} \, \,  t< -m
   \end{array}
   \right.
\]
where $\beta \in \N$ will be chosen in a moment.
For $t<-m$ we have
\[
h'(t)=\frac{1}{1+(t+m)^{2\beta}}>0, \quad h''(t)=- \frac{2\beta(t+m)^{2\beta -1}}{(1+(t+m)^{2\beta})^{2}},
\]
therefore $h\in C^2(\R).$

Define
\[
\tilde \varphi(x) =
\left\{
\begin{array}{cc}
  h(\varphi(x)),  & \,\textrm{if} \, \rho(x,y)>0 \\
 -\int_{-\infty}^0\frac{1}{1+s^{2\beta}}ds-m,   &  \,\textrm{if} \, \, \rho(x,y)=0
   \end{array}
   \right. 
\]
Remark that $\tilde \varphi$  satisfies \eqref{4.1}, \eqref{4.2}.  
By recalling \eqref{rhoII}, \eqref{Lphi2II}, \eqref{M2} and by choosing $\beta \in \N$ and $2\beta> \max \{ 1, 1-4/\alpha\},$ in ${\tilde G(y,r/2)}$ we have

\[
\begin{split}
L\tilde \varphi
&=h''(\varphi)\left(a_{11}\varphi_{x_1}^2+2a_{12}x_1\varphi_{x_1}\varphi_{x_2}+a_{22}x_1^2\varphi_{x_2}^2\right)+h'(\varphi)
L\varphi\\
&\leq |h''(\varphi)|\L( (X_1\varphi)^2+(X_2\varphi)^2)\\
&\leq |h''(\varphi)|\L \tilde C M_2^2\, {x_1^2}{\r^{2\alpha -4}}
\leq CM_2^{1-2\beta}\r^{\alpha -4-2\alpha \beta}x_1^2
\\
&\leq \frac{C}{r^2(r+|y_1|)^2} x_1^2
\end{split}
\]
because $\rho<Cr^{1/2}(r+|y_1|)^{1/2}$ in ${\tilde G(y,r/2)}.$

Finally, by Lemma \ref{Lbarrier}, $L\tilde \varphi =L\varphi \leq 0$ in
$\R^2\setminus \tilde G(y,r/2),$ 
and $L\tilde \varphi \leq C \frac{x_1^2}{r^2(r+|y_1|)^2}$ in
$\overline{G(y,r/2)}.$ We now take a continuous function $\zeta$ such 
that $\zeta \equiv 1$ in $\overline{G(y,r/2)},$ $\zeta \equiv 0$
outside $\tilde G(y,2r/3 ),$  $0\leq \zeta \leq 1,$  $\zeta$ is even in the $x_1$ variable and such that \eqref{4.3} 
holds true.
\end{proof}
\section{Critical Density}\label{CDE}
By combining Theorem \ref{ABP} with the geometry of the sets $G$ and the barrier in Lemma \ref{lemma4.1}
 we get a first rough critical density estimate. 
 
 Recall that by definition $G(y,r)=\tilde G(y,r) \cap \{(x_1,x_2): \, x_1 y_1 \geq 0\}$ for $|y_1|\geq r,$ while
$G(y,r)=\tilde G(y,r)$ for $|y_1|<r$
 and
 that
 by Theorem \ref{equiv} and Theorem \ref{charact} $B(y,r/c)\subset G(y,r)\subset B(y,cr)$ for a universal constant $c>0.$
\begin{theorem}\label{CDG0}
There exist universal constants $0<\nu<1$ and $M>1$ such that, for every $y=(y_1,0)\in\R^2$ and $r>0,$
if $u\in C^2(G(y,2r))\cap C(\overline{G(y,2r)})$ is a nonnegative solution of $Lu\leq 0$ in $G(y,2r)$ and $\inf_{G(y,r)} u\leq 1$
then
\begin{equation}\label{cde}
\left|\left\{
u\leq M
\right\}\cap G(y,3r/2)\right|\geq \frac{\nu}{\max\left\{r+|y_1|, \frac{1}{r+|y_1|}\right\}}|G(y,3r/2)|.
\end{equation}
Here $|\cdot|$ is the Lebesgue measure.
 \end{theorem}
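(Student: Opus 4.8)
The strategy is to apply the weighted ABP estimate of Theorem~\ref{ABP} to a suitably truncated test function built from the barrier $\tilde\varphi$ of Lemma~\ref{lemma4.1}, and then to transfer the resulting $L^2$-bound with the weight $x_1^2$ into a bound on the unweighted Lebesgue measure of the ``good set'' $\{u\leq M\}\cap G(y,3r/2)$. First I would introduce $w=u+\tilde\varphi$ on $\tilde G(y,2r)$, extended by $u$ (or by a nonnegative value) outside, so that $w\geq 0$ on $\partial\tilde G(y,2r)$ by \eqref{4.1} and $w\leq u-2$ on $G(y,r)$ by \eqref{4.2}; since $\inf_{G(y,r)}u\leq 1$ there is a point where $w\leq -1$, so $\sup w^-\geq 1$. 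On the other hand $Lw=Lu+L\tilde\varphi\leq L\tilde\varphi\leq C\,x_1^2\,\zeta(x)/(r^2(r+|y_1|)^2)$ pointwise, with $\operatorname{supp}\zeta\subset\overline{\tilde G(y,r)}$, so $w$ is a subsolution of $Lw\leq f\,x_1^2$ with $f^+\leq C\,\zeta/(r^2(r+|y_1|)^2)$ supported in $\overline{\tilde G(y,r)}$.

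Next I would feed this into \eqref{eqABP}. The diameter of the relevant domain is comparable to $\operatorname{diam}(\tilde G(y,2r))$, which by the structure theorems and \eqref{homog2} is of Euclidean size $\sim \max\{r, r(r+|y_1|)\}$ (the $x_2$-direction dominates). Combining,
\[
1\leq \sup w^-\leq C\,\operatorname{diam}\bigl(\tilde G(y,2r)\bigr)\left(\int_{\{w=\G_w\}\cap\tilde G(y,2r)}\bigl(f^+\bigr)^2x_1^2\,dx\right)^{1/2}.
\]
On the contact set one still has $Lw\leq 0$ wherever $\zeta=0$ (since there $L\tilde\varphi\leq 0$ by Lemma~\ref{Lbarrier}); more to the point, $f^+$ is supported in $\overline{\tilde G(y,r)}$, so the integral is over a subset of $\{u\leq M\}\cap \overline{\tilde G(y,r)}$ — here I use that on the contact set $w=\G_w\leq 0$, hence $u\leq -\tilde\varphi\leq M$ by the lower bound in \eqref{4.3}. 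Therefore, bounding $f^+\leq C/(r^2(r+|y_1|)^2)$,
\[
1\leq \frac{C\,\operatorname{diam}(\tilde G(y,2r))}{r^2(r+|y_1|)^2}\left(\int_{\{u\leq M\}\cap G(y,3r/2)} x_1^2\,dx\right)^{1/2},
\]
after checking that $\overline{\tilde G(y,r)}\subset G(y,3r/2)$ up to adjusting constants (using $\tilde G(y,r)\subset Box(y,Cr)\subset B(y,C'r)\subset G(y,3r/2)$ for the right normalization, and the $x_1 y_1\geq 0$ caveat only helps). The weight satisfies $x_1^2\leq C(r+|y_1|)^2$ on $\tilde G(y,r)$, so $\int_{\{u\leq M\}\cap G(y,3r/2)}x_1^2\,dx\leq C(r+|y_1|)^2\,|\{u\leq M\}\cap G(y,3r/2)|$.

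Putting the pieces together and using $\operatorname{diam}(\tilde G(y,2r))\leq C\max\{r(r+|y_1|),r\}=Cr\max\{r+|y_1|,1\}$ together with $|G(y,3r/2)|\sim r^2(r+|y_1|)$ from \eqref{homog2}, one solves for the measure:
\[
\bigl|\{u\leq M\}\cap G(y,3r/2)\bigr|\;\geq\;\frac{c\,r^4(r+|y_1|)^4}{\operatorname{diam}^2\,(r+|y_1|)^2}\;\geq\;\frac{c\,r^2(r+|y_1|)^2}{\max\{r+|y_1|,1\}^2}\;\geq\;\frac{\nu}{\max\{r+|y_1|,\tfrac1{r+|y_1|}\}}\,|G(y,3r/2)|,
\]
where in the last step I distinguish the cases $r+|y_1|\geq 1$ and $r+|y_1|<1$ and absorb the remaining powers of $r$ into the constant after normalizing (one may reduce to $r$ small or of unit size by the dilation invariance \eqref{invariance}–\eqref{dil}, which rescales $y_1$ and $r$ homogeneously). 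The main obstacle I anticipate is bookkeeping the various constants and powers of $r$ and $r+|y_1|$ consistently — in particular making the weighted estimate produce exactly the awkward factor $1/\max\{r+|y_1|,1/(r+|y_1|)\}$ rather than something with extra powers of $r$; this is where the scaling argument via $\delta_t$ is essential to reduce the number of free parameters, and where one must be careful that $\operatorname{diam}$ is governed by the $x_2$-extent $r(r+|y_1|)$ and not by the $x_1$-extent $r$.
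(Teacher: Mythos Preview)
Your approach is essentially the paper's: set $w=u+\tilde\varphi$, apply the weighted ABP on the domain where $u$ lives, observe that on the contact set one has both $u\le M$ (from $\tilde\varphi\ge -M$) and $x\in\operatorname{supp}\zeta\subset\overline{\tilde G(y,r)}$, bound $x_1^2\le C(r+|y_1|)^2$ there, use $\operatorname{diam}\le C r\max\{1,r+|y_1|\}$ and $|G(y,3r/2)|\sim r^2(r+|y_1|)$, and solve. Two points, however, need correction.

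First, you must apply ABP on $G(y,2r)$, not $\tilde G(y,2r)$: $u$ is only assumed to exist on $G(y,2r)$, and when $|y_1|\ge 2r$ the set $\tilde G(y,2r)$ has a second, reflected component. More importantly, your claim that ``$\overline{\tilde G(y,r)}\subset G(y,3r/2)$ up to adjusting constants'' and that ``the $x_1y_1\ge 0$ caveat only helps'' is false precisely in the range $3r/2\le|y_1|<2r$. There $G(y,2r)=\tilde G(y,2r)$ (so the ABP domain still sees both halves of $\tilde G(y,r)$), but $G(y,3r/2)=\tilde G(y,3r/2)\cap\{x_1y_1\ge 0\}$ is only one half, and the contact set may well land partly in the reflected component $G(-y,r)$, which is \emph{not} contained in $G(y,3r/2)$. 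The paper treats this borderline strip separately by re-running the estimate at a smaller radius $R=2r/3$, for which $|y_1|\ge 2R$ forces the one-sided geometry throughout and the inclusion goes through cleanly.

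Second, your final appeal to the dilations $\delta_t$ to ``absorb remaining powers of $r$'' is unnecessary here and suggests you have not closed the arithmetic. In fact your own middle expression already gives the result exactly: writing $s=r+|y_1|$, one has $\dfrac{r^2 s^2}{\max\{1,s\}^2}=r^2 s\cdot\dfrac{s}{\max\{1,s\}^2}$ and $\dfrac{s}{\max\{1,s\}^2}=\dfrac{1}{\max\{s,1/s\}}$ identically, so after dividing by $|G(y,3r/2)|\sim r^2 s$ the factor $1/\max\{s,1/s\}$ drops out with no leftover powers of $r$. The paper reserves the dilation argument for the \emph{next} theorem, where it is used to remove this non-universal factor and obtain the genuine critical density.
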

 \begin{proof}
Take $\tilde \varphi$ as in Lemma \ref{lemma4.1} and $w=u+\tilde \varphi.$ We
have $Lw \leq C\frac{x_1^2\zeta}{r^2(r+|y_1|)^2}$ in  $G(y,2r)$ and $w\geq 0$ on $\p G(y,2r)$
by \eqref{4.1}. Moreover, $\inf_{G(y,r)}w\leq -1$ by \eqref{4.2}.

We
next apply Theorem \ref{ABP}   to $w$ in $G(y,2r).$
By Theorem \ref{charact} and Theorem \ref{equiv} we have 
$\diam(G(y,2r))\leq C\max\{ r, r(r+|y_1|)\},$ $c^{-1} r^2(r+|y_1|) \leq |G(y,r)|\leq c r^2(r+|y_1|)$ and $\sup_{G(y,2r)}|x_1|\leq \tilde c (r+ |y_1|),$
for some positive universal constants $C,c,\tilde c.$
By recalling
that $0\leq \zeta \leq 1,$ and $supp \, \zeta \subset \tilde G(y,r)$
 we get
\begin{equation}\label{anna}
\begin{split}
1 & \leq  C \diam(G(y,2r)) \left( \int_{\{w=\Gamma_w \} \cap G(y,2r) }\frac{(x_1 C \zeta)^2}{r^4(r+|y_1|)^4}dx
\right)^{1/2} \\
 &\leq  C \frac{ \max\{ r, r(r+|y_1|)\}}{r^2 (r+|y_1|)}
 \left( \int_{\{w=\Gamma_w \} \cap G(y,2r) \cap \tilde G(y,r) }{\zeta^2}dx
\right)^{1/2} \\
&\leq \tilde C  \frac{ \max\{ r, r(r+|y_1|)\}}{r^2 (r+|y_1|)}
\left| \{w=\Gamma_w \}  \cap G(y,2r) \cap \tilde G(y,r)
\right|^{1/2}\\
&\leq \tilde C  \frac{ \max\{ r, r(r+|y_1|)\}}{r^2 (r+|y_1|)}\left| \{u\leq M \}  \cap G(y,2r) \cap \tilde G(y,r) \right|^{1/2}\\
& \leq \tilde C c^{1/2}   \frac{ \max\{ r, r(r+|y_1|)\}}{r^2 (r+|y_1|)}
\frac{(r^2(r+|y_1|))^{1/2}}{|G(y,r)|^{1/2}}\left| \{u\leq M \}  \cap G(y,2r) \cap \tilde G(y,r) \right|^{1/2}\\
& \leq \tilde C c^{1/2}   \frac{ \max\{ 1, (r+|y_1|)\}}{(r+|y_1|)^{1/2}}
\frac{\left| \{u\leq M \}  \cap G(y,2r) \cap \tilde G(y,r) \right|^{1/2}}{|G(y,r)|^{1/2}}
\end{split}
\end{equation}
because $w=\Gamma_w$ implies $w(x)\leq 0$ and thus $u(x)\leq -\varphi(x)\leq M.$

Moreover,
\[
\begin{split}
1 & \leq C \frac{ \max\{ 1, (r+|y_1|)\}}{(r+|y_1|)^{1/2}}
\frac{\left| \{u\leq M \}  \cap G(y,2r) \cap \tilde G(y,3r/2) \right|^{1/2}}{|G(y,3r/2)|^{1/2}}
\end{split}
\]

We now remark that, for $|y_1|<3r/2$ or for $|y_1|\geq 2r,$ we have $G(y,2r) \cap \tilde G(y,3r/2)=G(y,3r/2)$
and the estimate \eqref{cde} follows. 

On the contrary, for  $(3/2)r\leq |y_1|< 2r$ we have $G(y,2r) \cap \tilde G(y,3r/2)=G(y,3r/2)\cup G(-y,3r/2)$ with $G(y,3r/2)\cap G(-y,3r/2)$ empty.
The idea is to apply estimate \eqref{anna} in a smaller ball.
Let us call $r=(3/2)R.$ We obviously have $R=(2/3)r$ and $2R=(4/3)r<2r.$ Hence, $u\in C^2(G(y,2R))\cap C(\overline{G(y,2R)})$ is a nonnegative solution
of $Lu\leq 0$ in $G(y,2R).$ If $\inf_{G(y,r)}u=\inf_{G(y,(3/2)R)}\leq 1$ then by arguing as in estimate \eqref{anna} we have
\[
\begin{split}
1 & \leq C \frac{ \max\{ 1, ((3/2)R+|y_1|)\}}{((3/2)R+|y_1|)^{1/2}}
\frac{\left| \{u\leq M \}  \cap G(y,2R) \cap \tilde G(y,(3/2)R) \right|^{1/2}}{|G(y,(3/2)R)|^{1/2}}\\
&=C \frac{ \max\{ 1, ((3/2)R+|y_1|)\}}{((3/2)R+|y_1|)^{1/2}}
\frac{\left| \{u\leq M \}  \cap G(y,(3/2)R) \right|^{1/2}}{|G(y,(3/2)R)|^{1/2}}
\end{split}
\]
because $|y_1|\geq (3/2)r=(9/4)R>2R.$ By recalling that  $r=(3/2)R,$ we have
\[
\begin{split}
1 &
\leq  C \frac{ \max\{ 1, (r+|y_1|)\}}{(r+|y_1|)^{1/2}}
\frac{\left| \{u\leq M \}  \cap  G(y,r) \right|^{1/2}}{|G(y,r)|^{1/2}}\\
&\leq
\tilde  C \frac{ \max\{ 1, (r+|y_1|)\}}{(r+|y_1|)^{1/2}}
\frac{\left| \{u\leq M \}  \cap  G(y,3r/2) \right|^{1/2}}{|G(y,3r/2)|^{1/2}}.\\
\end{split}
\]

\end{proof}

We now perform the constant in Theorem \ref{CDG0} by taking into account the geometry of the problem.
\begin{theorem}[Critical Density]\label{CDG}
There exist universal constants $0<\nu<1$ and $M>1$ such that, for every $y=(y_1,y_2)\in\R^2$ and $r>0,$
if $u\in C^2(G(y,2r))\cap C(\overline{G(y,2r)})$ is a nonnegative solution of $Lu\leq 0$ in $G(y,2r)$ and $\inf_{G(y,r)} u\leq 1$
then
\[
\left|\left\{
u\leq M
\right\}\cap G(y,3r/2)\right|\geq {\nu}|G(y,3r/2)|.
\]
Here $|\cdot|$ is the Lebesgue measure.
 \end{theorem}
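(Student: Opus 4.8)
The strategy is to reduce the general statement, for an arbitrary center $y=(y_1,y_2)$, to the already-established Theorem \ref{CDG0}, which applies only to centers of the form $(y_1,0)$, and simultaneously to remove the geometric factor $\max\{r+|y_1|,\frac{1}{r+|y_1|}\}$. Two independent devices are needed: a translation in the $x_2$-variable to handle the second coordinate, and the intrinsic dilation $\delta_t$ of \eqref{dil} to normalize the size of $r+|y_1|$.

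\smallskip

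\textbf{Step 1: reduction to $y_2=0$.} First I would observe that both the operator $L$ and the function $\rho(\cdot,y)$ in \eqref{rho}, hence the sets $G(y,r)$ and $\tilde G(y,r)$, are invariant under the Euclidean translation $\tau_c(x_1,x_2)=(x_1,x_2+c)$: indeed $X_1,X_2$ have coefficients independent of $x_2$, so $L(u\circ\tau_c)=(Lu)\circ\tau_c$, and $\rho(\tau_c x,\tau_c y)=\rho(x,y)$, so $G(\tau_c y,r)=\tau_c(G(y,r))$ and Lebesgue measure is preserved. Setting $c=-y_2$ and $\hat y=(y_1,0)$, the function $\hat u=u\circ\tau_{y_2}$ is a nonnegative solution of $L\hat u\le 0$ in $G(\hat y,2r)$ with $\inf_{G(\hat y,r)}\hat u\le 1$, and the conclusion for $u$ at $y$ is equivalent to the conclusion for $\hat u$ at $\hat y$. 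So from now on we may assume $y=(y_1,0)$.

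\smallskip

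\textbf{Step 2: rescaling to normalize $r+|y_1|$.} With $y=(y_1,0)$, apply the dilation $\delta_t$ with $t$ chosen so that, after rescaling, the quantity (radius)$+|$center$_1|$ equals $1$; concretely if $v=u\circ\delta_t$ then by \eqref{invariance} $Lv(x)$ is (up to the factor $t^2$, which does not affect the sign) $(Lu)(\delta_t x)$, so $v$ solves $Lv\le 0$; moreover $\delta_t$ maps $G(y,r)$ onto $G(\delta_{1/t}y,r/t)$ because $\rho$ is $\delta_t$-homogeneous of degree $1$ in the appropriate sense ($\rho(\delta_t x,\delta_t y)=t\,\rho(x,y)$), which one checks directly from \eqref{rho}. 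Choosing $t=r+|y_1|$ we get a new center $y'=(y_1/t,0)$ and new radius $r'=r/t$ with $r'+|y_1'|=1$, i.e.\ $\max\{r'+|y_1'|,\frac{1}{r'+|y_1'|}\}=1$. Applying Theorem \ref{CDG0} to $v$ at $y'$ with radius $r'$ yields
\[
\left|\{v\le M\}\cap G(y',3r'/2)\right|\ge \nu\,|G(y',3r'/2)|,
\]
and pulling back by $\delta_{1/t}$ — which multiplies all Lebesgue measures by the same Jacobian factor $t^{Q}=t^3$ and sends $G(y',3r'/2)$ to $G(y,3r/2)$, $\{v\le M\}$ to $\{u\le M\}$ — gives exactly the claimed inequality $\left|\{u\le M\}\cap G(y,3r/2)\right|\ge\nu\,|G(y,3r/2)|$ with the \emph{same} universal $\nu,M$ from Theorem \ref{CDG0}.

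\smallskip

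\textbf{Expected main obstacle.} The translation step is routine; the genuine point is verifying that $\delta_t$ interacts correctly with everything at once: that $\rho(\delta_t x,\delta_t y)=t\,\rho(x,y)$ (hence $g_r$ rescales as $g_{r/t}(\delta_{1/t}x,\delta_{1/t}y)=\frac1t g_r(x,y)$, taking into account the case split in \eqref{g} and the fact that $|y_1|\ge r \iff |y_1/t|\ge r/t$, so the two branches of $g_r$ are preserved), that $G(y,2r)$, $G(y,3r/2)$, $G(y,r)$ all rescale consistently so that the hypotheses ($Lv\le 0$ on $G(y',2r')$, $\inf_{G(y',r')}v\le 1$) transfer verbatim, and that the homogeneity-$3$ Jacobian cancels between numerator and denominator so that the constant $\nu$ is not distorted. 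Once these compatibilities are recorded, Theorem \ref{CDG} follows from Theorem \ref{CDG0} with no further estimates.
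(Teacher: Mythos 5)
Your argument is correct, and its core is the same as the paper's: reduce to Theorem \ref{CDG0} using the $x_2$-translation invariance of $L$ and the intrinsic dilations $\delta_t$, under which $\rho(\delta_t x,\delta_t y)=t\,\rho(x,y)$, the case split in \eqref{g} is preserved (since $|y_1|\ge r \iff |y_1|/t\ge r/t$), and the Jacobian $t^3$ cancels in the density ratio. The one genuine difference is your normalization: by choosing $t=r+|y_1|$ you force $r'+|y_1'|=1$, so the factor $\max\{r'+|y_1'|,\tfrac{1}{r'+|y_1'|}\}$ in \eqref{cde} equals $1$ and Theorem \ref{CDG0} immediately yields the clean constant $\nu$. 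The paper instead normalizes in stages (first $r=1$ with $|y_1|\le 1$, then $|y_1|=1$ with $r<1$ using that the factor is at most $2$ there, then rescales by $t=|y_1|$), which requires a four-step case analysis that your single normalization collapses. Your version is shorter and buys a marginally better constant; the only cosmetic gap is that, as in the paper's own Step II, one should note that the translation and dilation act on the coefficients as well (producing $\tilde a_{ij}=a_{ij}\circ\tau_c$, resp.\ $a_{ij}\circ\delta_t$), which is harmless because the ellipticity constants $\l,\L$ in \eqref{ell} are unchanged and all constants in Theorem \ref{CDG0} depend only on these.
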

 \begin{proof}
The strategy is to combine Theorem \ref{CDG0} with dilations and translations and it has been inspired us by Ermanno Lanconelli during a private conversation.

\noindent
I STEP. Apply Theorem \ref{CDG0} for $y_1\in [-1,1]$ and $r=1.$
We then have that \eqref{cde}  holds true with a universal positive constant and precisely
\begin{equation}\label{cde1}
\left|\left\{
u\leq M
\right\}\cap G((y_1,0),3/2)\right|\geq {\nu}|G((y_1,0),3/2)|.
\end{equation}

\noindent
II STEP.
Assume $|y_1|\leq r.$
We change variables and we recall \eqref{invariance}
 and \eqref{homog}, together with Theorem \ref{charact} and Theorem \ref{equiv}.
 We introduce a change of variable that preserves the equation: fix $y_2\in \R$ and $r>0$ and let
 \begin{equation}\label{T}
 T(x)=T(x_1,x_2)=(rx_1, y_2+r^2x_2).
 \end{equation}
 Remark that $T(x) \in G(y,r)$ iff $x\in G((y_1/r,0),1).$
We define $\tilde u(x)=u(T(x)).$ We have, for all $i\leq j\in \{1,2\}$
\[
\begin{split}
X_i\tilde u(x)=&r X_iu(T(x)), \\
X_jX_i\tilde u(x) =& r^2X_jX_i u(T(x)).
\end{split}
\]
Set $\tilde a_{11}(x)=a_{11}(T(x)), \tilde a_{12}(x)=a_{12}(T(x)), \tilde a_{22}(x)=a_{22}(T(x)), $ and 
$\tilde L=\tilde a_{11}X_1^2+2\tilde a_{12}
X_2X_1+\tilde a_{22}X_2^2,$ then
$\tilde L \tilde u(x)=r^2Lu(T(x)).$
We have that $\tilde u$
 satisfies the hypothesis of the theorem with  $y_1/r\in [-1,1]$ and by \eqref{cde1} in the first step we get
 \[
\left|\left\{
\tilde u\leq M
\right\}\cap G((y_1/r,0),3/2)\right|\geq {\nu}|G((y_1/r,0),3/2)|
\]
and
by \eqref{homog}, together with Theorem \ref{charact} and Theorem \ref{equiv},
 we conclude that for $|y_1|\leq r$ we have
\begin{equation}\label{rbig}
 \begin{split}
 \left|\left(\left\{
 u\leq M
\right\}\cap G(y,3r/2)\right)\right|=&
\left|T\left(\left\{
\tilde u\leq M
\right\}\cap G((y_1/r,0),3/2)\right)\right| \\
=&Cr^3\left|\left\{
\tilde u\leq M
\right\}\cap G((y_1/r,0),3/2)\right|\\
\geq &{\nu} Cr^3G((y_1/r,0),3/2)|= {\tilde\nu}|G(y,3r/2)|.
\end{split}
\end{equation}
In particular, \eqref{rbig} holds true at $y=(0,y_2)$ for every $r>0$ and for every $y_2.$

\noindent
III STEP. Fix a point $y=(y_1,0)$ such that $|y_1|=1.$ For $r\geq 1$ we have that \eqref{rbig} holds true.
For $0<r<1$ we apply again estimate \eqref{cde} and, by taking into account that 
${\max\left\{r+|y_1|, \frac{1}{r+|y_1|}\right\}}=r+1<2,$ we get the theorem for every $r.$

\noindent
IV STEP. Take an arbitrary point $y=(y_1,y_2)\in \R^2$ with $y_1\neq 0$ and an arbitrary $\tilde r>0$ and apply the dilation in \eqref{T} with $r=|y_1|.$
By the third step we have
\[
 \begin{split}
 \left|\left(\left\{
 u\leq M
\right\}\cap G(y,3\tilde r/2)\right)\right|=&
\left|T\left(\left\{
\tilde u\leq M
\right\}\cap G((y_1/|y_1|,0),3\tilde r/(2|y_1|))\right)\right| \\
=&C|y_1|^3\left|\left\{
\tilde u\leq M
\right\}\cap G((y_1/|y_1|,0),3\tilde r /(2|y_1|))\right|\\
\geq &{\nu} C|y_1|^3G((y_1/|y_1|,0),3\tilde r /(2|y_1|))|= {\tilde\nu}|G(y,3\tilde r/2)|.
\end{split}
\]
 \end{proof}

 \begin{theorem}\label{cdB}
There exist universal constants  $\eta>2,$ $0<\nu, \theta<1$ and $M>1$ such that, for all $y\in \R^2$ and $r>0,$
if $u\in C^2(B(y,\eta r))\cap C(\overline{B(y,\eta r)})$ is a nonnegative solution of $Lu\leq 0$ in $B(y,\eta r)$ and
$\inf_{B(y,\theta r)} u\leq 1$
then
\[
\left|\left\{
u\leq M
\right\}\cap B(y,r)\right|\geq \nu|B(y,r)|
\]
 \end{theorem}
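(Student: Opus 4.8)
The plan is to derive Theorem~\ref{cdB} from the Critical Density estimate already established in Theorem~\ref{CDG}, by translating between the adapted sublevel sets $G(y,\cdot)$ and the quasi-metric balls $B(y,\cdot)$. Recall from the discussion preceding Theorem~\ref{CDG0} that there is a universal constant $c\ge 1$ with
\[
B(z,s/c)\subseteq G(z,s)\subseteq B(z,cs),\qquad z\in\R^2,\ s>0,
\]
and that, by \eqref{homog2} together with Theorem~\ref{equiv} and the box volume, $|G(z,s)|$ and $|B(z,s)|$ are both comparable to $s^2(s+|z_1|)$ with universal constants. Since $\tilde d(\cdot,z)$ is continuous, $\overline{B(z,s)}\subseteq\{\tilde d(\cdot,z)\le s\}\subseteq B(z,2s)$, hence $\overline{G(z,s)}\subseteq B(z,2cs)$.

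First I would fix the structural constants. Let $\theta$ be a small universal constant, chosen so that $c\theta<1$ and $3c^2\theta\le 2$ (e.g.\ $\theta:=\min\{1/2,\,2/(3c^2)\}\in(0,1)$), and let $\eta:=3>2$. Given $y\in\R^2$ and $r>0$, introduce the auxiliary radius $\rho:=c\theta r$; then $\rho<r$. The point of these choices is the chain of inclusions
\begin{gather*}
B(y,\theta r)=B(y,\rho/c)\subseteq G(y,\rho)\subseteq G(y,2\rho),\qquad \overline{G(y,2\rho)}\subseteq B(y,4c^2\theta r)\subseteq B(y,\eta r),\\
G(y,3\rho/2)\subseteq B\!\left(y,\tfrac{3c^2\theta}{2}\,r\right)\subseteq B(y,r),
\end{gather*}
where the middle inclusion uses $4c^2\theta\le 8/3<\eta$ and the last uses $3c^2\theta/2\le 1$.

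Next I would verify that $u$ meets the hypotheses of Theorem~\ref{CDG} on $G(y,2\rho)$: since $\overline{G(y,2\rho)}\subseteq B(y,\eta r)$ and $u\in C^2(B(y,\eta r))$, the function $u$ is of class $C^2$ on a neighbourhood of $\overline{G(y,2\rho)}$, so $u\in C^2(G(y,2\rho))\cap C(\overline{G(y,2\rho)})$, $u\ge0$, and $Lu\le0$ in $G(y,2\rho)$; moreover $\inf_{G(y,\rho)}u\le\inf_{B(y,\theta r)}u\le1$. Applying Theorem~\ref{CDG} at the centre $y$ with radius $\rho$ yields $|\{u\le M\}\cap G(y,3\rho/2)|\ge\nu_0\,|G(y,3\rho/2)|$, with $\nu_0\in(0,1)$ and $M>1$ the universal constants of that theorem. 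Finally I would transfer this to $B(y,r)$: since $G(y,3\rho/2)\subseteq B(y,r)$, the left-hand side is at most $|\{u\le M\}\cap B(y,r)|$, while the volume comparisons and $\rho<r$ give $|G(y,3\rho/2)|\ge\kappa\,|B(y,r)|$ for a universal $\kappa\in(0,1)$ (here one uses $\rho+|y_1|\ge c\theta(r+|y_1|)$, valid since $c\theta<1$). Combining, $|\{u\le M\}\cap B(y,r)|\ge\nu_0\kappa\,|B(y,r)|$, and one takes $\nu:=\nu_0\kappa$.

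I do not expect a genuine obstacle here: the analytic substance — the weighted ABP bound of Theorem~\ref{ABP}, the barrier of Lemma~\ref{lemma4.1}, and the dilation/translation argument — is already invested in Theorems~\ref{CDG0} and \ref{CDG}. The only delicate point is bookkeeping, namely keeping $\theta$, $\eta$ and the final $\nu$ universal (in particular independent of $|y_1|$), which forces the use of the homogeneity estimate \eqref{homog2} and the box description \eqref{inclusion} to bound the ratio $|G(y,3\rho/2)|/|B(y,r)|$ from below, together with a small amount of care with closures so that $\overline{G(y,2\rho)}$ genuinely lies inside the ball $B(y,\eta r)$ on which $u$ is assumed to be a subsolution.
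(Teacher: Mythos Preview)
Your proposal is correct and follows essentially the same approach as the paper: the paper's own proof is the single sentence ``By Theorem~\ref{charact} and Theorem~\ref{equiv} we can substitute the sets $G$ in Theorem~\ref{CDG} with the quasi metric balls $B$, after rescaling,'' and what you have written is precisely a careful unpacking of that rescaling, with explicit choices of $\theta$, $\eta$ and the inclusion chains and volume comparisons that make the transfer from $G(y,\cdot)$ to $B(y,\cdot)$ work uniformly in $y$ and $r$.
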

\begin{proof}
By Theorem \ref{charact} and Theorem \ref{equiv} we can substitute the sets $G$ in Theorem \ref{CDG} with the quasi metric balls $B,$ after rescaling.
\end{proof}

By arguing as in Theorem \ref{CDG} and by taking into account \eqref{anna} we get
\begin{corollary}\label{corpre}
There exist universal constants  $0<\nu<1$ and $M>1$ such that, for all $y\in \R^2$ and $r>0,$
if $u\in C^2(G(y, 2 r))\cap C(\overline{G(y,2 r)})$ is a nonnegative solution of $Lu\leq 0$ in $G(y,2 r)$ and
$\inf_{\tilde G(y,r)\cap G(y,2r)} u\leq 1$ then
\[
\left|\left\{
u\leq M
\right\}\cap \tilde G(y,r)\cap G(y,2r)\right|>  \nu|\tilde G(y,r)\cap G(y,2r) |.
\]
\end{corollary}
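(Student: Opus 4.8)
The plan is to re-read the proof of Theorem~\ref{CDG}, but instead of enlarging $\tilde G(y,r)$ to $\tilde G(y,3r/2)$ as was done in the proof of Theorem~\ref{CDG0}, to carry the set $\tilde G(y,r)\cap G(y,2r)$ unchanged through the whole argument. First I would record the geometry of this set. Writing $-y:=(-y_1,y_2)$ and using that $\rho(\cdot,y)=\rho(\cdot,-y)$, one checks three regimes: if $|y_1|<r$ or $|y_1|\ge 2r$ then $\tilde G(y,r)\cap G(y,2r)=G(y,r)$; if $r\le|y_1|<2r$ then $\tilde G(y,r)\subset\tilde G(y,2r)=G(y,2r)$ and $\tilde G(y,r)\cap G(y,2r)=\tilde G(y,r)=G(y,r)\cup G(-y,r)$, the disjoint union of two congruent pieces, both contained in $G(y,2r)$. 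In every case $\tilde G(y,r)\cap G(y,2r)\subset G(y,2r)$, so $u$ is a subsolution there, and, combining Theorem~\ref{charact}, Theorem~\ref{equiv} and \eqref{homog2}, $|\tilde G(y,r)\cap G(y,2r)|$ is comparable to $r^2(r+|y_1|)$, hence to $|G(y,r)|$, with universal constants (the factor $2$ in the disconnected regime being harmless).

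Next I would run the barrier computation of Theorem~\ref{CDG0} with no change. After a translation in $x_2$ I may assume $y=(y_1,0)$; take $\tilde\varphi$ from Lemma~\ref{lemma4.1} and set $w=u+\tilde\varphi$. Since $\tilde\varphi\le -2$ on all of $\tilde G(y,r)$ (its two zeros are at $(\pm y_1,0)$, so it sees both components), the hypothesis $\inf_{\tilde G(y,r)\cap G(y,2r)}u\le 1$ gives $\inf_{\tilde G(y,r)\cap G(y,2r)}w\le -1$, while $w\ge 0$ on $\partial G(y,2r)$ by \eqref{4.1}. Applying the weighted ABP principle Theorem~\ref{ABP} to $w$ on $G(y,2r)$ exactly as in the chain \eqref{anna} — using that $\mathrm{supp}\,\zeta\subset\overline{\tilde G(y,r)}$, that $\zeta$ is even in $x_1$ so it covers both components when $r\le|y_1|<2r$, and that $\{w=\Gamma_w\}$ forces $u\le-\tilde\varphi\le M$ — I obtain
\[
1\;\le\;C\,\frac{\max\{1,\,r+|y_1|\}}{(r+|y_1|)^{1/2}}\;\frac{\big|\{u\le M\}\cap\tilde G(y,r)\cap G(y,2r)\big|^{1/2}}{|G(y,r)|^{1/2}},
\]
which, by the measure comparison of the previous paragraph, becomes the analogue of \eqref{cde}:
\[
\big|\{u\le M\}\cap\tilde G(y,r)\cap G(y,2r)\big|\;\ge\;\frac{\nu_0}{\max\{r+|y_1|,\,1/(r+|y_1|)\}}\,\big|\tilde G(y,r)\cap G(y,2r)\big|.
\]

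It then remains to remove the bad factor, and I would do this by repeating the four-step dilation--translation scheme of the proof of Theorem~\ref{CDG}. In Step~I one applies the last display with $r=1$ and $|y_1|\le 1$, where $\max\{1+|y_1|,1/(1+|y_1|)\}\le 2$. In Step~II one uses the change of variables $T(x)=(rx_1,\,y_2+r^2x_2)$ of \eqref{T}: it preserves the class of equations, scales Lebesgue measure by $r^3$, commutes with the reflection $x_1\mapsto-x_1$, and, because $\rho$ is homogeneous with respect to the dilations \eqref{dil}, sends $\tilde G((a,0),s)\cap G((a,0),2s)$ onto $\tilde G((ra,y_2),rs)\cap G((ra,y_2),2rs)$; hence the estimate propagates to all $r>0$ with $|y_1|\le r$, and in particular to every $y=(0,y_2)$ and every $r>0$. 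Step~III handles $|y_1|=1$ by invoking Step~II for $r\ge 1$ and, for $0<r<1$, the last display directly, since there $\max\{r+1,1/(r+1)\}=r+1<2$. Step~IV reaches an arbitrary $y$ with $y_1\ne 0$ and arbitrary $\tilde r>0$ by dilating with $r=|y_1|$, the case $y_1=0$ being already covered by Step~II; the resulting constant can be taken strictly less than $1$, which yields the strict inequality in the statement.

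The only point that deserves a little care is the disconnected regime $r\le|y_1|<2r$, where $\tilde G(y,r)\cap G(y,2r)$ has two components $G(y,r)$ and its mirror image $G(-y,r)$ and one must be sure that the single barrier $\tilde\varphi$ and the single cut-off $\zeta$ — both even in $x_1$ — control $u$ on both pieces simultaneously. This is precisely where the identity $\rho(\cdot,y)=\rho(\cdot,-y)$ is used: $\tilde\varphi$ is literally the same barrier for $G(y,r)$ and for $G(-y,r)$, $\tilde G(y,r)$ is exactly the union of the two corresponding sublevel components, and $G(-y,r)\subset G(y,2r)$ so $u$ is a subsolution near both. Once this is observed the regime presents no further difficulty, and everything else is the bookkeeping of the dilations already performed in Theorem~\ref{CDG}.
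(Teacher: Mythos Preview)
Your proposal is correct and follows precisely the route the paper indicates: the paper's entire proof of Corollary~\ref{corpre} is the one-line remark ``By arguing as in Theorem~\ref{CDG} and by taking into account \eqref{anna},'' and you have faithfully unpacked that remark---using the raw estimate \eqref{anna} on the set $\tilde G(y,r)\cap G(y,2r)$ (with the weaker infimum hypothesis, permissible because $\tilde\varphi\le-2$ on all of $\tilde G(y,r)$) and then rerunning the four-step dilation--translation scheme. Your careful treatment of the disconnected regime $r\le|y_1|<2r$ via the evenness of $\tilde\varphi$ and $\zeta$ in $x_1$ is exactly the right observation and is more explicit than the paper itself.
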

\begin{remark}
 Consider 
the symmetry ${S}$ with respect to the $x_2$ axis 
\begin{equation}\label{S}
{S}(x_1,x_2)=(-x_1,x_2).
\end{equation}
We first show that $S$ preserves the equation.
Let us call $u_{S}(x)=u(S(x))$ and $L_S=a_{11}(S(x))X_1^2+a_{12}(S(x))X_2X_1+a_{22}(S(x))X_2^2.$
We have that the coefficients of $L_S$ satisfy \eqref{ell} and $L_Su_S(x)=(Lu)(S(x)).$

By Corollary \ref{corpre},  
there exist universal constants  $0<\nu<1$ and $M>1$ such that, for all $y\in \R^2$ and $r>0,$
  if $u\in C^2(G(y, 2 r))\cap C(\overline{G(y,2 r)})$ is a nonnegative solution of $Lu\leq 0$ in $G(y,2 r)$ we have:
\begin{itemize}
\item For  $ |y_1|<r$ or $|y_1|\geq 2r,$ if $\inf_{G(y,r)} u\leq 1$ then
\[
\begin{split}
\left|\left\{
u\leq M
\right\}\cap G(y,r)\right|> \nu|G(y,r) |,
\end{split}
\]
i.e.
\[
\begin{split}
\left|\left\{
u> M
\right\}\cap G(y,r)\right|< (1- \nu)|G(y,r) |.
\end{split}
\]

\item
For $r\leq |y_1|<2r,$
if $\inf_{G(y,r)} u\leq 1$ or $\inf_{G(y,r)} u_S\leq 1$ then
\[
\begin{split}
\left|\left\{
u\leq M
\right\}\cap G(y,r)\right|+\left|\left\{
u_S\leq M
\right\}\cap G(y,r)\right|>2 \nu|G(y,r) |,
\end{split}
\]
i.e.
\[
\begin{split}
\left|\left\{
u> M
\right\}\cap G(y,r)\right|+\left|\left\{
u_S> M
\right\}\cap G(y,r)\right|< 2 (1-\nu)|G(y,r) |.
\end{split}
\]

\end{itemize}

By taking the negation of the previous implications we  have, respectively 
\begin{itemize}

\item
For $|y_1|<r$ or $|y_1|\geq 2r,$ if
\[
\begin{split}
\left|\left\{
u> M
\right\}\cap G(y,r)\right|\geq  (1-\nu) |G(y,r) |
\end{split}
\]
then $\inf_{G(y,r)} u >1.$
\item
 For  $r\leq |y_1|<2r,$ if
\begin{equation}\label{uS}
\begin{split}
\left|\left\{
u> M
\right\}\cap G(y,r)\right|+\left|\left\{
u_S> M
\right\}\cap G(y,r)\right|\geq 2 (1-\nu)|G(y,r) |,
\end{split}
\end{equation}
then $\inf_{G(y,r)} u>1$ and  $\inf_{G(y,r)} u_S> 1.$
\end{itemize}
\end{remark}
Let us define $\tilde B(y,r)=B(y,r) \cup B(S(y),r),$ with $S$ as in \eqref{S} and $B$ as in \eqref{ball}.
By recalling the structure Theorem \ref{equiv} and Theorem \ref{charact}  and by rescaling we have
\begin{corollary}\label{cor}
There exist universal constants  $0<\epsilon,\theta<1$ and $M>1,\eta>2$ such that, for all $y\in \R^2$ and $r>0,$
if $u\in C^2(B(y, \eta r))\cap C(\overline{B(y,\eta r)})$ is a nonnegative solution of $Lu\leq 0$ in $B(y,\eta r)$ and
\[
\left|\left\{
u\geq M
\right\}\cap \tilde B(y,r)\cap B(y,\eta r)\right|\geq  \epsilon |\tilde B(y,r)\cap B(y,\eta r) |,
\]
then 
$\inf_{\tilde B(y,\theta r)\cap B(y,\eta r)} u>1.$
\end{corollary}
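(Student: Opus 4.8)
The plan is to read Corollary~\ref{cor} as the reformulation, in terms of the quasi-metric balls $B$ and $\tilde B$, of the contrapositive of the critical density contained in the Remark preceding it, which there is phrased for the sublevel sets $G$ and $\tilde G$. The only ingredients should be the structure Theorems~\ref{charact} and \ref{equiv} (comparing $B$, $Box$ and $G$), the doubling property \eqref{dconstant}, the symmetry $S$ of \eqref{S} together with the identity $L_Su_S(x)=(Lu)(S(x))$, and a rescaling of the same kind as in the proof of Theorem~\ref{CDG}.

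\emph{First}, I would merge the two bullet points of the Remark into a single statement for $\tilde G$. Since $\rho(\cdot,y)$ depends only on $x_1^2$ and on $y_1^2$, one has $S(G(y,s))=G(S(y),s)$, hence $\tilde G(y,s)=G(y,s)\cup G(S(y),s)$ with the two pieces meeting in a null set, $|G(S(y),s)|=|G(y,s)|$, and $\{u_S>M_0\}\cap G(y,s)=S\bigl(\{u>M_0\}\cap G(S(y),s)\bigr)$. Feeding these into the Remark yields: there are universal $M_0>1$ and $\delta_0\in(0,1)$ such that, whenever $u\ge 0$ solves $Lu\le 0$ in $\tilde G(y,2s)$ and $|\{u>M_0\}\cap\tilde G(y,s)|\ge(1-\delta_0)|\tilde G(y,s)|$, then $\inf_{\tilde G(y,s)}u>1$. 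For $|y_1|\ge 2s$ the two components of $\tilde G$ are far apart, so one must take $\delta_0$ small enough --- $\delta_0=\nu/2$ works --- to force the $\{u>M_0\}$-density above $1-\nu$ in each component separately and then apply each bullet; for $|y_1|<2s$ the value $\delta_0=\nu$ is enough.

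\emph{Next}, I would fix the constants and pass to $\tilde B$. Take $M:=M_0+1$, so that $\{u\ge M\}\subset\{u>M_0\}$. By Theorems~\ref{charact} and \ref{equiv} there is a universal $c>1$ with $Box(z,s/c)\subset B(z,s)\subset Box(z,cs)$ and $Box(z,s/c)\subset G(z,s)\subset Box(z,cs)$ for all $z\in\R^2$ and $s>0$; together with \eqref{dconstant} this lets one trade a density hypothesis on a ball $B$ for a density hypothesis on a $G$ of comparable radius, at the cost of a fixed doubling factor. The one genuine difficulty is that the $\tilde G$-statement needs $u$ to solve the equation on all of $\tilde G(y,2s)$, whose reflected component sits near $S(y)=(-y_1,y_2)$, whereas here $u$ is a solution only in $B(y,\eta r)$; I would therefore split on the size of $|y_1|$. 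If $|y_1|$ exceeds a suitable universal multiple of $\eta r$, then, because $\tilde d(y,S(y))=2|y_1|$, both $B(S(y),r)$ and $B(S(y),\theta r)$ are disjoint from $B(y,\eta r)$, so $\tilde B(y,r)\cap B(y,\eta r)=B(y,r)$ and $\tilde B(y,\theta r)\cap B(y,\eta r)=B(y,\theta r)$, and the claim reduces to the contrapositive of the single-ball Theorem~\ref{cdB} --- one only has to take $\epsilon$ at least as large as the threshold produced there, after passing between $B$ and $G$. Otherwise $|y_1|$ is at most a universal multiple of $r$; choosing $\eta$ a large universal constant one fits $\tilde G(y,2s)\subset B(y,\eta r)$ for a scale $s$ comparable to $r$, transfers the density hypothesis on $\tilde B(y,r)\cap B(y,\eta r)$ to a $(1-\delta_0)$-density for $\{u>M_0\}$ on $\tilde G(y,s)$ via the inclusions just stated and \eqref{dconstant}, applies the $\tilde G$-statement to get $\inf_{\tilde G(y,s')}u>1$ for some $s'$ comparable to $r$, and finally notes $\tilde G(y,s')\supset\tilde B(y,\theta r)\cap B(y,\eta r)$ once $\theta$ is a small enough universal constant. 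The measure comparisons here are exactly what pin $\epsilon$ down as a universal number in $(0,1)$.

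\emph{The main obstacle} I expect is entirely the bookkeeping of universal constants: choosing $\eta$ so that the ``$|y_1|$ large'' regime (reflected ball outside $B(y,\eta r)$) and the ``$|y_1|$ moderate'' regime (enough room inside $B(y,\eta r)$ to run the $\tilde G$-statement on the reflected component) together exhaust all $(y,r)$ --- the intermediate range $|y_1|\asymp\eta r$, where $B(S(y),r)$ pokes into $B(y,\eta r)$ but $S(y)$ itself lies near or outside $\partial B(y,\eta r)$, is the awkward one and may require an auxiliary covering of the resulting ``lune'' by balls sitting well inside $B(y,\eta r)$ --- while simultaneously absorbing the doubling factor lost at each inclusion $B\hookrightarrow Box\hookrightarrow G$ and at the splitting $\tilde G=G\cup S(G)$, so that the resulting $\epsilon$ stays below $1$ and $\theta$, $1/\eta$ stay positive, uniformly in $y$ and $r$. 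Everything else is a direct transcription of the Remark through the structure theorems and a rescaling.
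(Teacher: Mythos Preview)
Your approach is correct and matches the paper's, whose entire proof here is the single clause ``by recalling the structure Theorem~\ref{equiv} and Theorem~\ref{charact} and by rescaling''; you have correctly unpacked what that entails. One simplification: the contrapositive of Corollary~\ref{corpre} already \emph{is} your merged $\tilde G$-statement (phrased for $\tilde G(y,r)\cap G(y,2r)$), so you can start directly from there rather than reassembling it from the two bullets of the Remark; your concern about the intermediate range $|y_1|\asymp\eta r$ is a bookkeeping detail the paper leaves implicit.
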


\section{Double Ball Property, Power Decay Property and Harnack's Inequality}\label{dbp}

We start with  the definition of a  uniform lower barrier function for a ring and for the operator $L.$
\begin{definition}
Let $0<\gamma<1.$ 
A function $\Phi$ is a $\gamma$-lower barrier function for the ring $R(y,r,3r):=\tilde G(y,3r)\setminus \overline{\tilde G(y,r)}$ for every  $y\in \R^2$ and $r>0$ and for the operator $L$  if
\begin{itemize}
\item $\Phi\in C^2(R(y,r,3r))\cap C(\overline{R(y,r,3r)})$ 
\item $L \Phi \geq 0$ on $R(y,r,3r),$
\item $\Phi|_{\p \tilde G(y,3r)}\leq 0$
\item $\Phi|_{\p \tilde G(y,r)}\leq 1.$
\item $\inf_{\p \tilde G(y,2r)}\Phi\geq \gamma.$
\end{itemize}
\end{definition}
The main tool of this section is the following Ring Theorem, which has independent interest because you can reproduce it whenever 
you have  the weak maximum principle.

\begin{theorem}[Ring Theorem]\label{rt} 
Suppose  there exists $\Phi$ a  $\gamma$-barrier function for the ring $R(y,r,3r)=\tilde G(y,3r)\setminus \overline{\tilde G(y,r)}$ for every  $y\in \R^2$ and $r>0$ and for the operator $L.$
Then the Double Ball Property  (see Definition \ref{tralli}) 
holds true in $\tilde G(y,3r)$ with constant $\gamma.$

\end{theorem}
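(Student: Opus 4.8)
The plan is to derive the Double Ball Property from the existence of the $\gamma$-barrier $\Phi$ by a comparison argument, exactly in the spirit of the classical reduction of a Harnack-type decay to the solvability of an exterior Dirichlet problem. Fix $y\in\R^2$ and $r>0$, and take any $u\in K$, so $u\in C^2(B(y,3r))\cap C(\overline{B(y,3r)})$, $u\ge 0$, $Lu\le 0$ in $B(y,3r)$, and $u\ge 1$ on $B(y,r)$. Since by Theorem \ref{charact} and Theorem \ref{equiv} the sublevel sets $\tilde G(y,\cdot)$ are comparable to the boxes $Box(y,\cdot)$ and hence to the quasi-balls $B(y,\cdot)$, after a harmless rescaling of the radius it suffices to prove the analogous statement on the $\tilde G$-rings: namely that a nonnegative supersolution which is $\ge 1$ on $\tilde G(y,r)$ is $\ge\gamma$ on $\tilde G(y,2r)$. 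The barrier $\Phi$ lives on $R(y,r,3r)=\tilde G(y,3r)\setminus\overline{\tilde G(y,r)}$, and this is the region where we will compare.

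First I would set up the comparison on the ring $R=R(y,r,3r)$. On $\p R$ we have two pieces: on $\p\tilde G(y,r)$ the function $u\ge 1\ge \Phi$ (using $\Phi|_{\p\tilde G(y,r)}\le 1$), and on $\p\tilde G(y,3r)$ we have $u\ge 0\ge \Phi$ (using $\Phi|_{\p\tilde G(y,3r)}\le 0$). Thus $u\ge\Phi$ on all of $\p R$. Inside $R$ we have $Lu\le 0$ and $L\Phi\ge 0$, so $L\Phi\ge Lu$ in $R$, and the Weak Maximum Principle (Theorem \ref{WMP}) applied to the pair $(\Phi,u)$ on the bounded open set $R$ gives $\Phi\le u$ throughout $R$. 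Restricting to the middle sphere $\p\tilde G(y,2r)\subset R$ and invoking the last barrier property $\inf_{\p\tilde G(y,2r)}\Phi\ge\gamma$ yields $u\ge\gamma$ on $\p\tilde G(y,2r)$.

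Next I would upgrade this from the boundary sphere $\p\tilde G(y,2r)$ to the whole set $\tilde G(y,2r)$. On the inner region $\tilde G(y,2r)$ the function $u$ is a nonnegative supersolution: it is $\ge 1\ge\gamma$ on $\tilde G(y,r)$ by hypothesis, and $\ge\gamma$ on $\p\tilde G(y,2r)$ by the previous step; in particular $u\ge\gamma$ on $\p\bigl(\tilde G(y,2r)\setminus\overline{\tilde G(y,r)}\bigr)$. Applying the Weak Maximum Principle once more — comparing $u$ with the constant function $\gamma$, which satisfies $L\gamma=0$ — on the open set $\tilde G(y,2r)\setminus\overline{\tilde G(y,r)}$ gives $u\ge\gamma$ there as well, hence $u\ge\gamma$ on all of $\tilde G(y,2r)$. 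Translating back through the inclusions $Box(y,C^{-1}\rho)\subset\tilde G(y,\rho)\subset Box(y,C\rho)$ and the comparable estimate for $B(y,\rho)$, and adjusting the three radii $r,2r,3r$ by the universal constant $C$ at the outset, we obtain exactly the inclusion asserted in Definition \ref{tralli}, with the constant $\gamma$ depending only on $\l,\L$ (since the barrier $\Phi$ of the hypothesis does).

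The only genuine subtlety — and the step I expect to require the most care — is the bookkeeping of radii: the Double Ball Property in Definition \ref{tralli} is stated on the Euclidean-looking triple $B(y,r)\subset B(y,2r)\subset B(y,3r)$, whereas the barrier is supplied on the $\tilde G$-rings, and the containments $\tilde G(y,\cdot)\sim Box(y,\cdot)\sim B(y,\cdot)$ carry a fixed multiplicative constant $C$ that is not $1$. One therefore has to prove the conclusion for a rescaled configuration $B(y,c^{-1}r)\subset B(y,2r)\subset B(y,c\,3r)$ and then absorb $c$ into the statement (or, equivalently, observe that the axiomatic framework of \cite{DGL} only needs the double-ball property for \emph{some} fixed pair of comparable radii, not literally for the pair $(r,2r,3r)$). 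Aside from this, the argument is a clean two-fold application of the Weak Maximum Principle and involves no computation; all the analytic work has already been done in constructing $\Phi$.
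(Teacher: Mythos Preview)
Your core argument is correct and matches the paper's proof exactly: compare $u$ with $\Phi$ on the ring $R(y,r,3r)$ via the Weak Maximum Principle to obtain $u\ge\gamma$ on $\p\tilde G(y,2r)$, then apply the Weak Maximum Principle a second time (against the constant $\gamma$) to propagate this to all of $\tilde G(y,2r)$. Your final paragraph about radius bookkeeping is unnecessary here: the theorem as stated asserts the Double Ball Property \emph{in $\tilde G(y,3r)$}, not in $B(y,3r)$, so no translation between $\tilde G$ and $B$ is required in this proof --- that conversion is carried out separately in Corollary~\ref{rtcor}.
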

\begin{proof}
The main ingredient of the proof is the weak maximum principle, Theorem \ref{WMP}.
Let $u\in C^2(\tilde G(y,3r))\cap C(\overline{\tilde G(y,3r)})$ be a nonnegative classical solution of $Lu\leq 0$ in $\tilde G(y,3r)$ and assume $u\geq 1$ in $\tilde G(y,r).$
Let $\Phi$ be a $\gamma$-lower barrier function for the ring $R(y,r,3r)=\tilde G(y,3r)\setminus \overline{\tilde G(y,r)}.$
By the weak maximum principle we have $u\geq \Phi$ in the ring $R(y,r,3r).$
In particular, $u \geq \inf_{\p \tilde G(y,2r)}\Phi$ on  $\p \tilde G(y,2r).$
Now consider the function $u$ in $\tilde G(y,2r).$ We have $Lu\leq 0$ in $\tilde G(y,2r)$ and $u\geq \inf_{\p \tilde G(y,2r)}\Phi\geq\gamma$ on $\p \tilde G(y,2r).$
Then by the weak maximum principle we have $u\geq \gamma$ in $\tilde G(y,2r).$
\end{proof}

In the following proposition we prove the existence of a $\gamma$-lower barrier function for a ring and for the operator $L$.

\begin{proposition}\label{barrier}
There exists $\gamma>0$ such that,
for every $y\in \R^2$ and  $r>0,$ there exists $\Phi$ a  $\gamma$- lower barrier function for the ring $\tilde G(y,3r)\setminus  \overline{\tilde G(y,r)}$  and for the operator $L$ in \eqref{L}. Moreover, $\Phi$ is an even function with respect to $x_1.$
\end{proposition}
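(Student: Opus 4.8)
The plan is to take $\Phi$ of the explicit form
\[
\Phi(x) = c_1 + c_2\,\rho^{\alpha}(x,y),
\]
with $\rho$ as in \eqref{rho}, $\alpha = 2 - 3\Lambda/\lambda \le -1$, and constants $c_1 \in \R$, $c_2 > 0$ depending on $y$ and $r$, to be fixed below. For this choice $L\Phi = c_2\,L\bigl(\rho^{\alpha}(\cdot,y)\bigr) \ge 0$ wherever $\rho(\cdot,y) > 0$: this is exactly Lemma \ref{Lbarrier} when $y = (y_1,0)$, and for a general pole $y = (y_1,y_2)$ it follows from the same lemma, since $X_1$ and $X_2$ commute with translations in the $x_2$ variable and \eqref{ell} is unchanged if $a_{ij}(x_1,x_2)$ is replaced by $a_{ij}(x_1,x_2+y_2)$. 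Since $\Phi$ depends on $x$ only through $\rho(x,y)$, it is automatically even in $x_1$, and it is $C^{\infty}$ at every point where $\rho(\cdot,y) > 0$.

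The first step is to see that each surface $\partial\tilde G(y,kr)$ is a level set of $\rho(\cdot,y)$. Reading off the two branches of \eqref{tg} one checks that for every $k > 0$
\[
\tilde G(y,kr) = \{x \in \R^2 : \rho(x,y) < \rho_k\}, \qquad \rho_k := \max\{\,kr,\ \sqrt{kr\,|y_1|}\,\},
\]
the alternative $|y_1| < kr$ yielding $\rho_k = kr$ and the alternative $|y_1| \ge kr$ yielding $\rho_k = \sqrt{kr\,|y_1|}$. Since $k \mapsto \rho_k$ is strictly increasing and $\rho_1 \ge r > 0$, the ring is the annular region $R(y,r,3r) = \{\rho_1 < \rho(\cdot,y) < \rho_3\}$, the surface $\partial\tilde G(y,2r) = \{\rho(\cdot,y) = \rho_2\}$ lies strictly inside it, and $\rho(\cdot,y) \ge \rho_1 \ge r$ on $\overline{R(y,r,3r)}$, so $\Phi \in C^{\infty}\bigl(\overline{R(y,r,3r)}\bigr)$. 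It is essential here that $\rho$ is even in $x_1$, because $\tilde G(y,kr)$ may consist of two symmetric pieces.

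Next I would fix $c_1,c_2$ by requiring $\Phi \equiv 1$ on $\{\rho = \rho_1\} = \partial\tilde G(y,r)$ and $\Phi \equiv 0$ on $\{\rho = \rho_3\} = \partial\tilde G(y,3r)$, which gives $c_2 = (\rho_1^{\alpha} - \rho_3^{\alpha})^{-1} > 0$ and $c_1 = -c_2\rho_3^{\alpha}$. The differential inequality, the regularity, the evenness and the two boundary conditions (with equality) are then in place, and since $\Phi$ decreases monotonically from $1$ to $0$ as $\rho$ runs from $\rho_1$ to $\rho_3$, its value on the middle surface lies in $(0,1)$ automatically. It remains to bound that value below by a constant \emph{not depending on $y$ or $r$}. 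Since on $\{\rho = \rho_2\}$
\[
\Phi = \frac{\rho_2^{\alpha} - \rho_3^{\alpha}}{\rho_1^{\alpha} - \rho_3^{\alpha}} = \frac{(\rho_2/\rho_3)^{\alpha} - 1}{(\rho_1/\rho_3)^{\alpha} - 1},
\]
everything reduces to the scale- and translation-invariant ratios $\rho_1/\rho_3$ and $\rho_2/\rho_3$, and a short computation splitting according to $|y_1| < r$, $r \le |y_1| < 2r$, $2r \le |y_1| < 3r$ and $|y_1| \ge 3r$ shows that in all cases $\tfrac13 \le \rho_1/\rho_3$ and $\rho_2/\rho_3 \le \sqrt{2/3}$. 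Hence, using $\alpha < 0$,
\[
\Phi\big|_{\{\rho = \rho_2\}} \ge \frac{(2/3)^{\alpha/2} - 1}{3^{-\alpha} - 1} =: \gamma \in (0,1),
\]
a constant depending only on $\alpha$, and therefore only on $\lambda$ and $\Lambda$. This yields the required $\gamma$-lower barrier function, even in $x_1$.

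The hard part is the bookkeeping in the first and third steps, that is, controlling what happens at the ``thresholds'' $|y_1| = r, 2r, 3r$ where the definition \eqref{tg} of $\tilde g_{kr}$ switches branch: one must make sure that the ordering $\rho_1 < \rho_2 < \rho_3$ persists across them, that $\partial\tilde G(y,2r)$ remains the genuine intermediate level set lying inside the outer boundary, and that the two ratios $\rho_1/\rho_3$, $\rho_2/\rho_3$ stay uniformly bounded away from $1$. Everything else is an immediate consequence of Lemma \ref{Lbarrier} together with the explicit form of $\Phi$.
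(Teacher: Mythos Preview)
Your proposal is correct and follows essentially the same approach as the paper: both take $\Phi = c_1 + c_2\rho^\alpha$ with $\alpha = 2 - 3\Lambda/\lambda$, fix $c_1,c_2$ so that $\Phi$ equals $1$ on $\partial\tilde G(y,r)$ and $0$ on $\partial\tilde G(y,3r)$, invoke Lemma~\ref{Lbarrier} for the differential inequality, and then bound $\Phi|_{\partial\tilde G(y,2r)}$ from below by splitting into the four cases $|y_1|<r$, $r\le|y_1|<2r$, $2r\le|y_1|<3r$, $|y_1|\ge 3r$. The only difference is organizational: the paper computes $M_1,M_2,M_3$ explicitly in each of the four cases and then takes $\gamma$ to be the minimum of the resulting lower bounds, whereas you compress the bookkeeping by writing $\partial\tilde G(y,kr)=\{\rho=\rho_k\}$ with $\rho_k=\max\{kr,\sqrt{kr|y_1|}\}$ and reducing everything to the two uniform ratio bounds $\rho_1/\rho_3\ge 1/3$ and $\rho_2/\rho_3\le\sqrt{2/3}$, which is a bit cleaner but yields the same conclusion.
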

\begin{proof}
Let $\rho$ be the function in \eqref{rho} and let $\phi=\rho^{\a}$ be the function in Lemma \ref{Lbarrier}.
Recall that $\a\leq 2-3 \L/\l$ and therefore $\a\leq -1.$
Now take $\Phi =M_2 \phi -M_1,$ and choose $M_1,M_2$ such that $\Phi|_{\p \tilde G(y,3r)}= 0$
and $\Phi|_{\p \tilde G(y,r)}= 1.$ 
Obviously, $\Phi$ is an even function with respect to $x_1.$
We will show that $M_1,M_2$ are positive.
We distinguish four cases.

\noindent
I CASE. If $|y_1|<r$ we have 
\[
M_1= \frac{3^\a}{1-3^\a}>0, \quad M_2=\frac{1}{r^\a(1-3^\a)}>0.
\]
Let us put
\begin{equation}\label{m3I}
M_3=\Phi|_{\p \tilde G(y,2r)}=\frac{2^\a-3^\a}{1-3^\a}>0.
\end{equation}

\noindent
II CASE. If $3r\leq |y_1|,$ we have 
\[
M_1= \frac{3^{\a/2}}{1-3^{\a/2}}>0, \quad M_2=\frac{1}{(r|y_1|)^{\a/2}(1-3^{\a/2})}>0.
\]
Let us put
\begin{equation}\label{m3II}
M_3=\Phi|_{\p \tilde G(y,2r)}=\frac{2^{\a/2}-3^{\a/2}}{1-3^{\a/2}}>0.
\end{equation}

\noindent
III CASE. If $r\leq |y_1|<2r,$ we have  
\[
M_1= \frac{3^{\a}}{(|y_1|/r)^{\a/2}-3^{\a}}>0, \quad M_2=\frac{1}{(r|y_1|)^{\a/2}-(3r)^\a}>0.
\]
Let us put
\begin{equation}\label{m3III}
M_3=\Phi|_{\p \tilde G(y,2r)}=\frac{2^{\a}-3^{\a}}{(|y_1|/r)^{\a/2}-3^{\a}}\geq \frac{2^\a-3^\a}{1-3^a}>0.
\end{equation}

\noindent
IV CASE. If $2r\leq |y_1|<3r,$ we have 
\[
M_1= \frac{3^{\a}}{(|y_1|/r)^{\a/2}-3^{\a}}>0, \quad M_2=\frac{1}{(r|y_1|)^{\a/2}-(3r)^{\a}}>0.
\]
Let us put
\begin{equation}\label{m3IV}
M_3=\Phi|_{\p \tilde G(y,2r)}=\frac{(2|y_1|/r)^{\a/2}-3^{\a}}{(|y_1|/r)^{\a/2}-3^{\a}}\geq \frac{6^{\a/2}-3^\a}{2^{\a/2}-3^\a}>0.
\end{equation}
Now choose 
\[
\gamma=\min\left\{  \frac{2^\a-3^\a}{1-3^\a}, \frac{2^{\a/2}-3^{\a/2}}{1-3^{\a/2}}, \frac{6^{\a/2}-3^\a}{2^{\a/2}-3^\a} \right\}.
\]
By recalling Lemma \ref{Lbarrier} and \eqref{m3I}, \eqref{m3II},\eqref{m3III},\eqref{m3IV}
we have that $\Phi$ is a $\gamma$-lower barrier function for the ring $\tilde G(y,3r)\setminus  \overline{\tilde G(y,r)}$ for every $y\in \R^2$ and  $r>0$ and for the operator $L$ in \eqref{L}.

\end{proof}

As a consequence of Theorem \ref{rt} and of Proposition \ref{barrier} we get

\begin{corollary}\label{rtcor} 
The Double Ball Property for $L$ holds true in the following families of sets
\begin{itemize}
\item[i)] $\tilde G(y,3r),$  for every $y\in \R^2$ and $r>0.$

\item[ii)]  $G(y,3r),$ for $|y_1|<r$ or for $|y_1|\geq 3r.$  

\item[iii)] $\tilde B(y,\eta r)=B(y,\eta r)\cup B(S(y),\eta r)$  for every $y\in \R^2$ and $r>0.$
Here $\eta >2$ is a universal constant and $S$ is the reflexion in \eqref{S}.

\item[iv)]  $B(y,\eta r),$ for $|y_1|<r$ or for $|y_1|\geq \eta r$ and with $\eta>2$ as in iii).  

\end{itemize}
\end{corollary}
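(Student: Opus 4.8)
The plan is to deduce each item directly from the combination of Theorem~\ref{rt} and Proposition~\ref{barrier}, handling the four cases by translating the various sets back to sublevel sets $\tilde G$ of the function $g_r$ (or $\tilde g_r$) via the equivalences with boxes provided by Theorem~\ref{charact} and Theorem~\ref{equiv}, together with the rescaling that preserves the equation used in Section~\ref{CDE}.

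First, item i) is immediate: Proposition~\ref{barrier} provides, for every $y\in\R^2$ and $r>0$, a $\gamma$-lower barrier function for the ring $\tilde G(y,3r)\setminus\overline{\tilde G(y,r)}$ with a universal $\gamma>0$, so Theorem~\ref{rt} gives the Double Ball Property in $\tilde G(y,3r)$ with constant $\gamma$. For item ii), I would observe that by definition $g_r(x,y)=\tilde g_r(x,y)$ whenever $|y_1|<r$, hence $G(y,3r)=\tilde G(y,3r)$ in the range $|y_1|<r$; for $|y_1|\geq 3r$ one checks that on the relevant region $x_1y_1\geq 0$ and again $G(y,3r)=\tilde G(y,3r)$ (the constraint $g_r<3r$ with $|y_1|\geq 3r$ forces $x_1y_1>0$, so the ``$+\infty$'' branch of $g_r$ never enters the sublevel set). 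In both sub-ranges the barrier $\Phi$ of Proposition~\ref{barrier}, being even in $x_1$, restricts correctly to a $\gamma$-lower barrier for the ring $G(y,3r)\setminus\overline{G(y,r)}$, and Theorem~\ref{rt} applies verbatim.

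For items iii) and iv) I would pass from the sublevel sets to the quasi-metric balls $B$. By Theorem~\ref{charact} and Theorem~\ref{equiv} there is a universal constant $c>0$ with $B(y,r/c)\subset \tilde G(y,r)\subset B(y,cr)$ for all $y,r$ (and the analogous sandwich for $G$ when $|y_1|<r$ or $|y_1|$ large). Choosing $\eta>2$ large enough, the nested inclusions $B(y,\theta r)\subset\tilde G(y,r')\subset\tilde G(y,3r')\subset B(y,\eta r)$ hold for a suitable intermediate radius $r'$, so that the Double Ball Property on the $\tilde G$-rings transfers to the families $\tilde B(y,\eta r)=B(y,\eta r)\cup B(S(y),\eta r)$ in iii) and to $B(y,\eta r)$ in the restricted range of iv). For iii) the union with the reflected ball $B(S(y),r)$ is needed precisely because $\tilde G(y,r)$ already consists of two boxes around $(y_1,y_2)$ and $(-y_1,y_2)$ (Remark~\ref{rem}), so $\tilde G(y,r)$ corresponds to $\tilde B(y,cr)$ up to constants and the barrier, being even in $x_1$, is a barrier on the symmetrized ring as well.

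The main obstacle I anticipate is purely bookkeeping rather than conceptual: one must verify that when the sublevel-set rings are replaced by quasi-ball rings the inclusions line up so that ``$u\geq 1$ on the inner ball'' really does imply ``$u\geq 1$ on the inner sublevel set'' and ``$u\geq\gamma$ on the intermediate sublevel set'' implies ``$u\geq\gamma$ on the intermediate quasi-ball'', which forces a careful choice of $\eta$ and $\theta$ in terms of the universal constant $c$ from the structure theorems, and a check that the rescaling $T$ in \eqref{T} maps rings to rings without distorting $\gamma$. Since $T$ preserves the equation and commutes with the Euclidean-dilation structure of $\tilde G$ (because $g_r$ scales homogeneously under $\delta_t$), $\gamma$ stays universal; the only genuine care is in item iv), where the restriction $|y_1|<r$ or $|y_1|\geq\eta r$ is exactly what guarantees $B(y,\eta r)$ is (comparable to) a single box and hence matches $\tilde G$ rather than a two-component set.
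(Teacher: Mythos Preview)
Your approach matches the paper's: item i) is the direct combination of Theorem~\ref{rt} with Proposition~\ref{barrier}, item ii) reduces to i) by identifying $G$ with a half of $\tilde G$, and items iii)--iv) pass to the quasi-balls $B$, $\tilde B$ via the sandwich inclusions of Theorems~\ref{charact} and~\ref{equiv}.

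There is, however, a genuine misstatement in your treatment of ii) for $|y_1|\geq 3r$. You assert $G(y,3r)=\tilde G(y,3r)$, but this is false: since $\rho((-y_1,y_2),y)=0$, the point $(-y_1,y_2)$ always belongs to $\tilde G(y,3r)$ yet has $g_{3r}((-y_1,y_2),y)=+\infty$, so it is not in $G(y,3r)$. What is true---and what the paper actually uses---is that for $|y_1|\geq 3r$ the set $\tilde G(y,3r)$ does not meet $\{x_1=0\}$ (indeed $\rho^2((0,x_2),y)\geq y_1^2\geq 3r|y_1|$), so $\tilde G(y,jr)$ splits into two disjoint open pieces and $G(y,jr)=\tilde G(y,jr)\cap\{x_1y_1\geq 0\}$ for $j=1,2,3$. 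The barrier $\Phi$, being even in $x_1$, then restricts to a $\gamma$-barrier on the half-plane ring $G(y,3r)\setminus\overline{G(y,r)}$, and the Ring Theorem argument runs verbatim there. Your concluding sentence about ``$\Phi$ restricts correctly'' is the right mechanism; it simply does not follow from the (incorrect) equality $G=\tilde G$ but from this half-plane decomposition. A minor side remark: the rescaling $T$ you invoke is not needed for the universality of $\gamma$, since Proposition~\ref{barrier} already produces the barrier with a uniform $\gamma$ for all $y$ and $r$.
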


\begin{proof}
By Proposition \ref{barrier}  there exists $\Phi$ a  $\gamma$-lower barrier function for the ring $$R(y,r,3r)=\tilde G(y,3r)\setminus \overline{\tilde G(y,r)}$$
 for every  $y\in \R^2$ and $r>0$ and for the operator $L.$ By  Theorem \ref{rt} we get i).
 
To prove ii) we distinguish two cases.

If $|y_1|<r$ we have $\tilde G(y,r)=G(y,r), \tilde G(y,2r)=G(y,2r), \tilde G(y,3r)=G(y,3r),$ and we apply Theorem \ref{rt}.

If $3r\geq |y_1|,$ we have $\tilde G(y,r)\cap \{ x_1y_1\geq 0\}=G(y,r),$
 $\tilde G(y,2r)\cap \{ x_1y_1\geq 0\}=G(y,2r),$  $\tilde G(y,3r)\cap \{ x_1y_1\geq 0\}=G(y,3r),$ and we apply Theorem \ref{rt} in the halfplane $\{ x_1y_1\geq 0\}.$

Statement iii)  follows  from i), by Theorem \ref{equiv} and Theorem \ref{charact} and by rescaling.

Statement  iv) follows  from iii) by taking into account the geometry of the sets $\tilde B.$
\end{proof}

Remark that
 if $r\leq |y_1|<2r,$ we have $$\tilde G(y,r)\cap \{ x_1y_1\geq 0\}=G(y,r), \tilde G(y,2r)=G(y,2r), \tilde G(y,3r)=G(y,3r).$$
  If $2r\leq |y_1|<3r,$ we have $$\tilde G(y,r)\cap \{ x_1y_1\geq 0\}=G(y,r), \tilde G(y,2r)\cap \{ x_1y_1\geq 0\}=G(y,2r), \tilde G(y,3r)=G(y,3r).$$
 In both cases the geometry of the level sets $G$ changes in passing trough the $x_2$ axis.
Our strategy  to overcome this technical problem is to combine Corollary \ref{rtcor} with Corollary \ref{cor} to directly prove
 the following power decay property.

\begin{theorem}[Power Decay Property]\label{pd}
There exist universal constants $\eta >2,$  $0<\epsilon<1$ and $M>1$ such that, for all $y\in \R^2$ and $r>0,$
if $u\in C^2(B(y, \eta r))\cap C(\overline{B(y,\eta  r)})$ is a nonnegative solution of $Lu\leq 0$ in $B(y,\eta  r)$ and
$\inf_{\tilde B(y,r)\cap B(y,\eta r)} u\leq 1,$ then for every $k\in \N$ we have
\[
\left|\left\{
u\geq M^k
\right\}\cap \tilde B(y,r/2)\cap B(y,\eta r)\right|\leq \epsilon^k |\tilde B(y,r/2)\cap B(y,\eta r) |.
\]

\end{theorem}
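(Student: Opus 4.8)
The plan is to derive the Power Decay Property from an iteration of the "critical density in the negation form" packaged in Corollary \ref{cor}, combined with a Calder\'on--Zygmund type covering argument adapted to the quasi-metric $\tilde d$. First I would set up the induction on $k$. The base case $k=0$ is trivial since the measure on the left cannot exceed the total measure. For the inductive step, assume
\[
\left|\left\{u\geq M^{k}\right\}\cap \tilde B(y,r/2)\cap B(y,\eta r)\right|\leq \epsilon^{k}\,|\tilde B(y,r/2)\cap B(y,\eta r)|.
\]
Apply the argument to the function $u/M^{k}$ (still a nonnegative solution of $Lu\leq 0$, still with infimum $\leq 1$ on $\tilde B(y,r)\cap B(y,\eta r)$ because $M>1$), so that it suffices to show that passing from level $M^{k}$ to level $M^{k+1}$ costs a factor $\epsilon$. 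This reduces everything to the one-step estimate: the set $E=\{u\geq M\}\cap \tilde B(y,r/2)\cap B(y,\eta r)$ has measure at most $\epsilon$ times that of $\tilde B(y,r/2)\cap B(y,\eta r)$, provided $\inf_{\tilde B(y,r)\cap B(y,\eta r)}u\leq 1$.

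Next I would run the covering/stopping-time scheme. The key mechanism is the contrapositive statement in Corollary \ref{cor}: on a quasi-ball $\tilde B(z,s)$ (with $\eta s$ contained in $B(y,\eta r)$), if the density of $\{u\geq M\}$ inside $\tilde B(z,s)$ exceeds $\epsilon$, then $\inf_{\tilde B(z,\theta s)} u>1$; equivalently, wherever $u$ is still $\leq 1$ at some point of $\tilde B(z,\theta s)$, the density of $\{u\geq M\}$ in $\tilde B(z,s)$ must be $<\epsilon$. One then performs a Calder\'on--Zygmund decomposition of $E$ relative to the quasi-metric $\tilde d$ at the critical density level $\epsilon$: using the Doubling Property \eqref{dconstant} and the Ring Condition (Theorem \ref{ringc}) — which together guarantee the existence of a Besicovitch/Vitali-type covering lemma and continuity of the measure of balls — one obtains a family of disjoint "bad" quasi-balls $\{\tilde B(z_i,s_i)\}$ covering $E$ up to a null set, each with density of $\{u\geq M\}$ exactly at the threshold and each with a controlled "parent" in which the density is below $\epsilon$. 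Summing the parent estimates and invoking the double ball property through the enlarged balls $\tilde B$ (Corollary \ref{rtcor}, statement iii)) to propagate the bound $\inf u\leq 1$ from the small ball to the big ball $B(y,\eta r)$, one gets $|E|\leq \epsilon\,|\tilde B(y,r/2)\cap B(y,\eta r)|$ with a universal $\epsilon<1$.

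I expect the main obstacle to be the bookkeeping of the two anomalous regimes $r\le|y_1|<2r$ and $2r\le|y_1|<3r$, where — as noted right before the statement — the level sets $G(y,\cdot)$ are not symmetric about the $x_2$-axis and one genuinely needs the "doubled" sets $\tilde B(y,r)=B(y,r)\cup B(S(y),r)$ and the companion function $u_S(x)=u(S(x))$. The stopping-time family will occasionally produce a bad ball straddling the line $\{x_1=0\}$, and there one must apply Corollary \ref{cor} in its two-ball form (controlling $\{u\geq M\}$ and $\{u_S\geq M\}$ simultaneously, as in \eqref{uS}) rather than the single-ball form; checking that the covering still closes up and that the symmetry $S$ does not disturb the doubling constants is the delicate point. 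A secondary technical nuisance is ensuring that every selected quasi-ball, including its $\eta$-dilate, stays inside $B(y,\eta r)$ so that the hypotheses of Corollary \ref{rtcor} and Corollary \ref{cor} are legitimately in force; this is handled by choosing the radius in $\tilde B(y,r/2)$ rather than $\tilde B(y,r)$, leaving room for the dilations, exactly as the statement is phrased. Once these points are in place, the induction closes and the power decay follows, after which Harnack's inequality is quoted directly from \cite{DGL}.
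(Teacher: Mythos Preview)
Your induction is mis-set. You say the inductive step reduces to the ``one-step estimate''
\[
\inf_{\tilde B(y,r)\cap B(y,\eta r)} u\le 1\ \Longrightarrow\ \bigl|\{u\ge M\}\cap \tilde B(y,r/2)\cap B(y,\eta r)\bigr|\le \epsilon\,|\tilde B(y,r/2)\cap B(y,\eta r)|,
\]
and that applying it to $u/M^{k}$ closes the induction. But applying this to $u/M^{k}$ only yields $|\{u\ge M^{k+1}\}|\le \epsilon\,|\tilde B(y,r/2)\cap B(y,\eta r)|$, the \emph{same} bound at every level, never $\epsilon^{k+1}$. What is actually needed is a level-to-level inequality of the type $|E_{k+1}|\le c\,\epsilon\,|E_{k}|$ with $E_j=\{u\ge M^j\}$; your one-step compares $E_{k+1}$ to the whole ball, not to $E_k$. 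Your Calder\'on--Zygmund paragraph does not repair this: summing the ``parent'' densities bounds $|E|$ by $\epsilon$ times the total measure of the parents, which you can only control by $|\tilde B|$, not by $|E_k|$. And the sentence about using the Double Ball Property ``to propagate the bound $\inf u\le 1$'' is backwards: that property propagates \emph{lower} bounds $u\ge 1$, not upper bounds on the infimum.

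The paper proceeds differently. It first isolates Lemma~\ref{crucial}: if $|\{u\ge M\}\cap \tilde B(z,s)\cap B(y,\eta r)|\ge \epsilon\,|\tilde B(z,s)\cap B(y,\eta r)|$ then $\inf_{\tilde B(z,s)\cap B(y,\eta r)}u>M/M_1$, with the infimum taken on the \emph{same} ball (this is Corollary~\ref{cor} upgraded from $\tilde B(z,\theta s)$ to $\tilde B(z,s)$ by iterating Corollary~\ref{rtcor}\,iii)). With this lemma in hand, the paper does \emph{not} run a Calder\'on--Zygmund covering at varying centers; instead it follows \cite[Theorem~4.7]{DGL} and constructs a single sequence of concentric radii $r=t_0>t_1>\dots>r/2$, using the Ring Condition (Theorem~\ref{ringc}) to choose the $t_k$ so that
\[
|\tilde B(y,t_{k+1})\cap E_{k+2}|\le \epsilon\,|\tilde B(y,t_k)\cap E_{k+1}|,
\]
which then telescopes. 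If you want to salvage a covering approach, you still need Lemma~\ref{crucial} (so that ``bad'' balls with high $E_{k+1}$-density are forced to lie inside $E_k$, giving $|E_{k+1}|\le c\,\epsilon\,|E_k|$); without it the stopping-time argument does not close.
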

The following Lemma is a crucial tool in the proof of Theorem \ref{pd}.
\begin{lemma}\label{crucial}
There exist $\eta>2,$ $0<\epsilon<1,$ $M_1>1$ such that for all $M>0,$ for all $y\in \R^2$ and $r>0,$
if $u\in C^2(B(y, \eta r))\cap C(\overline{B(y,\eta  r)})$ is a nonnegative solution of $Lu\leq 0$ in $B(y,\eta  r)$ 
with
\[
\left|\left\{
u\geq M
\right\}\cap \tilde B(y,r)\cap B(y,\eta r)\right|\geq  \epsilon  |\tilde B(y,r)\cap B(y,\eta r) |
\]
then $\inf_{\tilde B(y,r)\cap B(y,\eta r)} u>M/M_1.$
\end{lemma}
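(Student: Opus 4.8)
The plan is to derive Lemma~\ref{crucial} directly from the Critical Density Theorem, in the form packaged in Corollary~\ref{cor}, by a rescaling argument in the dependent variable. The key observation is that the hypothesis $Lu\le 0$ is invariant under multiplication of $u$ by a positive constant, so we may always normalise. Given the hypothesis that the superlevel set $\{u\ge M\}$ fills at least an $\epsilon$-fraction of $\tilde B(y,r)\cap B(y,\eta r)$, we want to conclude a uniform lower bound on the infimum of $u$ over a comparable set. The natural candidate for $\epsilon$ and $\eta$ and the reflection geometry are exactly those furnished by Corollary~\ref{cor}; the only new content is bookkeeping the constant $M_1$.

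First I would fix $\epsilon,\theta,\eta,M$ to be the universal constants from Corollary~\ref{cor}. Suppose $u\ge 0$ solves $Lu\le 0$ in $B(y,\eta r)$ and
\[
\left|\{u\ge M\}\cap \tilde B(y,r)\cap B(y,\eta r)\right|\ge \epsilon\,\left|\tilde B(y,r)\cap B(y,\eta r)\right|.
\]
Arguing by contradiction (or, more cleanly, by a direct scaling): set $m=\inf_{\tilde B(y,\theta r)\cap B(y,\eta r)} u$ and consider $v=u/m$ when $m>0$; if $m=0$ there is nothing to prove since then trivially $\inf u>M/M_1$ fails only if $M/M_1\le 0$, so one handles $m=0$ separately by noting the superlevel-set hypothesis forces $u\not\equiv 0$ and then running the argument on any slightly larger infimum. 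For $m>0$, the rescaled function $v$ is a nonnegative solution of $Lv\le 0$ (same operator, since only $u$ is scaled), and
\[
\left|\{v\ge M/m\}\cap \tilde B(y,r)\cap B(y,\eta r)\right|\ge \epsilon\,\left|\tilde B(y,r)\cap B(y,\eta r)\right|.
\]
Here is the crux: apply Corollary~\ref{cor} to $v$ — but Corollary~\ref{cor} needs the superlevel parameter to be exactly $M$, not $M/m$. The way out is the elementary monotonicity $\{v\ge M/m\}\subset\{v\ge M\}$ provided $M/m\ge M$, i.e.\ $m\le 1$; and when $m\le 1$ we also need $M/m$ not to be used as a threshold but rather want to land the conclusion with $M_1$ absorbing the gap.

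The clean execution, then: distinguish $m\ge 1$ (in which case $\inf_{\tilde B(y,r)\cap B(y,\eta r)}u\ge \inf_{\tilde B(y,\theta r)\cap B(y,\eta r)}u = m\ge 1$, and since we may take $M_1$ large enough that $M/M_1<1$, we are done because the infimum over the larger set still needs care --- in fact one should phrase the conclusion over $\tilde B(y,r)\cap B(y,\eta r)$, so use that $\theta<1$ implies $\tilde B(y,\theta r)\subset \tilde B(y,r)$ only in the wrong direction, hence instead apply Corollary~\ref{cor} with $r$ replaced by $r/\theta$ after checking the enlarged ball still fits, or simply shrink the radius in the statement). More directly: apply Corollary~\ref{cor} with radius $\theta r$ in place of $r$; its hypothesis on $\{v\ge M\}\cap\tilde B(y,\theta r)\cap B(y,\eta\theta r)$ follows from the given estimate by the doubling/ring structure (Theorem~\ref{ringc} and \eqref{homog2}), at the cost of replacing $\epsilon$ by a smaller universal $\epsilon'$; the conclusion $\inf_{\tilde B(y,\theta^2 r)\cap B(y,\eta\theta r)} v>1$ then reads $\inf u>m$, which is absurd unless... --- this circularity is the obstacle. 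The right structure is: let $M_1$ be chosen so that $M/M_1$ is smaller than the contradiction hypothesis scale; assume for contradiction $\inf_{\tilde B(y,r)\cap B(y,\eta r)} u\le M/M_1$, set $v=(M_1/M)u$ so that $\inf v\le 1$ on that set and $\{v\ge M_1\}$ fills an $\epsilon$-fraction; since $M_1>M$, a fortiori $\{v\ge M\}$ fills an $\epsilon$-fraction, so Corollary~\ref{cor} gives $\inf_{\tilde B(y,\theta r)\cap B(y,\eta r)} v>1$. This does not yet contradict $\inf_{\tilde B(y,r)\cap B(y,\eta r)}v\le 1$ because $\theta<1$ makes the two balls nested the wrong way; so one iterates a covering argument, chaining Corollary~\ref{cor} over a bounded number of overlapping balls of radius $\theta r$ covering $\tilde B(y,r)$, using the double ball property (Corollary~\ref{rtcor}) to propagate the lower bound $v>1$ from each $\tilde B(y_i,\theta r)$ to all of $\tilde B(y,r)\cap B(y,\eta r)$ — this final propagation is where the double ball property is essential and is the main technical point to get right. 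Once $\inf_{\tilde B(y,r)\cap B(y,\eta r)}v>1$ is established, it contradicts $\inf v\le 1$, completing the proof; tracking the finitely many applications of Corollary~\ref{rtcor} fixes the universal $M_1$.
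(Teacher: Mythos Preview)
Your proposal eventually lands on the two ingredients the paper actually uses --- Corollary~\ref{cor} and the Double Ball Property of Corollary~\ref{rtcor}~iii) --- but the execution meanders through several abandoned attempts, and the final propagation step is misdescribed. You write that one should ``chain Corollary~\ref{cor} over a bounded number of overlapping balls of radius $\theta r$ covering $\tilde B(y,r)$'' and then ``propagate the lower bound $v>1$ from each $\tilde B(y_i,\theta r)$''. Taken literally this does not work: applying Corollary~\ref{cor} on a small ball $\tilde B(y_i,\theta r)$ requires the density hypothesis on that ball, and there is no reason the $\epsilon$-density on $\tilde B(y,r)\cap B(y,\eta r)$ is inherited by any particular sub-ball of the covering. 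You also overload the symbol $M$, using it simultaneously for the arbitrary parameter in the lemma and for the fixed universal threshold in Corollary~\ref{cor}; keeping these separate (call the latter $M_0$) clears up most of the circularity you encounter in the middle of the argument.

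The paper's proof is direct, with no contradiction and no covering. Rescale once: since $\{M_0u/M\ge M_0\}=\{u\ge M\}$, the density hypothesis is exactly the hypothesis of Corollary~\ref{cor} applied to $M_0u/M$, which yields $\inf_{\tilde B(y,\theta r)\cap B(y,\eta r)}u>M/M_0$. Now propagate \emph{concentrically} rather than by a covering: after enlarging $\eta$ so that Corollary~\ref{rtcor}~iii) is available, pick $k\in\N$ with $1\le 2^k\theta<\eta$ and apply the Double Ball Property $k$ times to the chain $\tilde B(y,\theta r)\subset\tilde B(y,2\theta r)\subset\cdots\subset\tilde B(y,2^k\theta r)$, all intersected with $B(y,\eta r)$. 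Each step costs a factor $\gamma$, so $u>M\gamma^k/M_0$ on $\tilde B(y,2^k\theta r)\cap B(y,\eta r)\supset\tilde B(y,r)\cap B(y,\eta r)$, and one sets $M_1=M_0/\gamma^k$. Corollary~\ref{cor} is invoked exactly once.
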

\begin{proof}
Let $M_0>1,$ $\eta>2,$ $0<\epsilon, \theta<1$ be the constants in Corollary \ref{cor}. Since
\[
\left|\left\{
M_0 u/M \geq M_0
\right\}\cap \tilde B(y,r)\cap B(y,\eta r)\right|\geq  \epsilon  |\tilde B(y,r)\cap B(y,\eta r) |
\]
then $\inf_{\tilde B(y,\theta r)\cap B(y,\eta r)}>M/M_0.$

By eventually enlarging $\eta,$ assume that Corollary \ref{rtcor} iii) holds true and that we can choose $k\in \N$ such that
$1\leq 2^k\theta<\eta.$  Let us call $\gamma$ the constant in the Double Ball Property. By iterating $k$ times Corollary \ref{rtcor} iii) we get 
$u>M \gamma^k/M_0$ in $\tilde B(y, 2^k \theta r)\cap B(y,\eta r).$ In particular, $u>M \gamma^k/M_0$ in $\tilde B(y,  r)\cap B(y,\eta r).$
Now take $M_1=M_0/\gamma^k.$
\end{proof}

\begin{proof}[Proof of Theorem \ref{pd}]
Let $\eta>2,M>1$ be universal constants such that Lemma \ref{crucial} and Corollary \ref{rtcor} iii) hold true.
Define $E_k=\left\{
u\geq M^k
\right\}\cap B(y,\eta r).$
By following the proof of  \cite[Theorem 4.7, conditions A1 and A2]{DGL}   and by recalling Theorem \ref{ringc} and Lemma \ref{crucial},
construct a family of quasi metric balls 
$B_k=B(y,t_k)$ with $t_0=r>t_1>t_2>\dots>r/2$ and choose $0<\epsilon<1$ such that
\[
\left|
\tilde B_{k+1}\cap E_{k+2}
\right|\leq  \epsilon \left|
\tilde B_{k}\cap E_{k+1}
\right|,\quad \forall k=0,1,\dots.
\]
We have
\[
\begin{split}
\left|\left\{
u\geq M^{k+2}
\right\}\cap \tilde B(y,r/2)\cap B(y,\eta r)\right|&\leq \epsilon^{k+1} |\tilde B(y,r)\cap B(y,\eta r) |\\
&\leq C_D\epsilon^{k+1} |\tilde B(y,r/2)\cap B(y,\eta r) |,
\end{split}
\]
where $C_D$ is the doubling constant in \eqref{dconstant}.
Now choose a positive integer $k_0$ such that $\epsilon^{k_0}{C_D}<1,$ and replace $M$ with $M^{2+k_0}$ to get the thesis.

\end{proof}

By Theorem \ref{pd} we immediately get the following corollaries.
 
\begin{corollary}\label{2.5tralli} There is a positive constant $\eta>2$ such that
the Double Ball Property for $L$ holds true in 
$B(y,\eta r)$ for all $y\in \R^2$ and $r>0.$ 
 \end{corollary}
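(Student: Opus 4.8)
The plan is to read the Double Ball Property for \emph{every} centre $y$ off the Power Decay Property of Theorem \ref{pd}. The point is that the difficulties peculiar to the $x_2$-axis — the splitting of $\tilde B$ into two lobes and the role of the reflection $S$ of \eqref{S} — have already been absorbed into the statement and proof of Theorem \ref{pd}, which holds with no restriction on $y$; so once Theorem \ref{pd} is available the Double Ball Property comes essentially for free, and in particular it covers the cases $r\le|y_1|<\eta r$ left open in Corollary \ref{rtcor} iv).

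Concretely, I would proceed as follows. First, from \eqref{homog2}, since $|(S(y))_1|=|y_1|$ one has $|B(S(y),\rho)|\le C^2|B(y,\rho)|$, whence for all $y$ and $\rho>0$
\[
|B(y,\rho)|\ \le\ \bigl|\tilde B(y,\rho)\cap B(y,2\eta\rho)\bigr|\ \le\ (1+C^2)\,|B(y,\rho)| ,
\]
with $C$ the constant of \eqref{homog2}. Next, fix the universal constants $\eta>2$, $0<\epsilon<1$, $M>1$ of Theorem \ref{pd}, choose $k_0\in\N$ with $\epsilon^{k_0}<(1+C^2)^{-1}$, and set $\gamma:=M^{-k_0}$. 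Then, given $u\in C^2(B(y,2\eta r))\cap C(\overline{B(y,2\eta r)})$ with $u\ge 0$, $Lu\le 0$ in $B(y,2\eta r)$ and $u\ge 1$ on $B(y,r)$, I would argue by contradiction: if $\inf_{\tilde B(y,2r)\cap B(y,2\eta r)}u\le\gamma$, then $v:=M^{k_0}u$ satisfies $Lv\le 0$, $v\ge0$ in $B(y,2\eta r)$ and $\inf_{\tilde B(y,2r)\cap B(y,2\eta r)}v\le 1$, so Theorem \ref{pd} applied at radius $2r$ with $k=k_0$ gives $|\{v\ge M^{k_0}\}\cap\tilde B(y,r)\cap B(y,2\eta r)|\le\epsilon^{k_0}|\tilde B(y,r)\cap B(y,2\eta r)|$; but $v\ge M^{k_0}$ on all of $B(y,r)\subseteq\tilde B(y,r)\cap B(y,2\eta r)$, and $|B(y,r)|\ge(1+C^2)^{-1}|\tilde B(y,r)\cap B(y,2\eta r)|$ by the comparison above, contradicting the choice of $k_0$. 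Hence $u>\gamma$ on $\tilde B(y,2r)\cap B(y,2\eta r)$, in particular on $B(y,2r)$; since the inner radius is $r$ and the middle one $2r$, and $2\eta>3$, this is the Double Ball Property of Definition \ref{tralli} with outer ball $B(y,2\eta r)$, and relabelling $2\eta$ as $\eta$ gives the corollary.

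The only genuine subtlety I anticipate is the choice of the exponent $k_0$: a single application of the power decay ($k=1$) would not suffice, because the mass of the inner ball $B(y,r)$ is not a priori larger than the fixed fraction $\epsilon$ of the symmetrized set $\tilde B(y,r)\cap B(y,2\eta r)$; iterating the decay until $\epsilon^{k_0}$ falls below the universal ratio $(1+C^2)^{-1}$ fixes this, at the harmless cost of shrinking the barrier constant to a smaller universal one. The rest is bookkeeping: matching radii so that $B(y,2\eta r)$ is at once large enough to run Theorem \ref{pd} and contains $B(y,3r)$, plus the volume estimate \eqref{homog2} and the monotonicity of supersolutions under restriction of the domain.
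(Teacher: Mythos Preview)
Your argument is correct. Both your proof and the paper's hinge on the same mechanism --- Theorem \ref{pd} forces the superlevel set $\{u\ge 1\}$ to have small measure inside the symmetrized ball, contradicting $u\ge 1$ on the full inner ball --- but the two proofs are organized differently.

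The paper first disposes of the ranges $|y_1|<r$ and $|y_1|\ge\eta r$ directly via Corollary \ref{rtcor} iv), and only in the transitional regime $r\le|y_1|<\eta r$ does it invoke the power decay; even there it runs a dichotomy (either $u\ge M^{-k}$ on all of $\tilde B(y,r)$, whence Corollary \ref{rtcor} iii) finishes, or else Theorem \ref{pd} applied at radius $r$ yields the contradiction, with the reflection $S$ and \eqref{uS} entering explicitly). You instead apply Theorem \ref{pd} once, at radius $2r$, uniformly in $y$: no case split, no dichotomy, no appeal to Corollary \ref{rtcor} at all, and the reflection is handled implicitly through the volume bound $|\tilde B(y,r)\cap B(y,2\eta r)|\le(1+C^2)|B(y,r)|$. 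The price is that your outer ball is $B(y,2\eta r)$ rather than $B(y,\eta r)$, which is immaterial after relabelling. Your packaging is cleaner; the paper's keeps a slightly smaller outer radius and makes the role of the axis-crossing more visible.

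One cosmetic remark: since $\tilde d$ is invariant under $S$, in fact $|B(S(y),\rho)|=|B(y,\rho)|$, so your constant $1+C^2$ can be taken to be $2$; but your weaker bound via \eqref{homog2} is of course sufficient.
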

 \begin{proof}
 Let $\eta >2$ be such that  Theorem \ref{pd} and Corollary \ref{rtcor} iii) hold true.
 Let $u\in C^2(B(y,\eta r))\cap C(\overline{B(y,\eta r)})$ 
 be a nonnegative classical solution of $Lu\leq 0$ in $B(y,\eta r)$ and assume $u\geq 1$ in $B(y,r).$ 
 If $|y_1|<r$ or $|y_1|\geq \eta r$ we apply Corollary \ref{rtcor} vi).
 
If $r\leq |y_1|< \eta r,$
let $M>1$ and $0<\epsilon<1$ be the universal constants in Theorem \ref{pd}. Now choose 
 $k\in \N$ such that $\epsilon^k <1/4.$
 If $u\geq 1/M^k$ in $\tilde B(y,r)$ then by Corollary \ref{rtcor} iii) $u\geq \gamma/M^k$ in $\tilde B(y,2r).$ 
 In particular, 
 $u\geq \gamma/M^k$ in $B(y,  2r)$ and the thesis follows.
 
  On the contrary, if there is a point $x_0\in \tilde B(y,r)$ such that $M^ku(x_0)<1$ then by considering the symmetry $S$ in \eqref{S}  and by Theorem 
 \ref{pd} and \eqref{uS} we get
  \[
 |\{u\geq 1\}\cap B(y,r/2)|+ |\{u_S\geq 1\}\cap B(y,r/2)|\leq 2 \epsilon^k |B(y,r/2)|< (1/2) |B(y,r/2)|
 \]
 and by recalling that $u\geq 1$ in $B(y,r),$ in particular $u\geq 1$ in $B(y,r/2),$ and 
 \[
 |B(y,r/2)|\leq |\{u\geq 1\}\cap B(y,r/2)|+ |\{u_S\geq 1\}\cap B(y,r/2)|<(1/2) |B(y,r/2)|
 \]
 we get a contradiction. 
 
 \end{proof}
 
\begin{corollary}\label{pdlemma}
There exist universal constants $\eta >2,$  $0<\epsilon<1$ and $M>1$ such that, for all $y\in \R^2$ and $r>0,$
if $u\in C^2(B(y, \eta r))\cap C(\overline{B(y,\eta  r)})$ is a nonnegative solution of $Lu\leq 0$ in $B(y,\eta  r)$ and
$\inf_{ B(y,r)\cap B(y,\eta r)} u\leq 1,$ then for every $k\in \N$ we have
\[
\left|\left\{
u\geq M^k
\right\}\cap  B(y,r/2)\cap B(y,\eta r)\right|\leq \epsilon^k |B(y,r/2)\cap B(y,\eta r) |.
\]

\end{corollary}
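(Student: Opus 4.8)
The plan is to read off Corollary~\ref{pdlemma} from the power decay for the symmetrized sets, Theorem~\ref{pd}, by simply discarding the reflected half. Since $\eta>2$ we have $B(y,r)\subset B(y,\eta r)$, and $B(y,r)\subset\tilde B(y,r)$ by the definition $\tilde B(y,r)=B(y,r)\cup B(S(y),r)$; hence $B(y,r)\subset\tilde B(y,r)\cap B(y,\eta r)$, and in particular $B(y,r)\cap B(y,\eta r)=B(y,r)$ and $B(y,r/2)\cap B(y,\eta r)=B(y,r/2)$. Therefore the hypothesis $\inf_{B(y,r)}u\le 1$ forces $\inf_{\tilde B(y,r)\cap B(y,\eta r)}u\le 1$, so $u$ meets the assumptions of Theorem~\ref{pd} with the very same constants $\eta$ and $M$.

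Applying Theorem~\ref{pd} and using $B(y,r/2)\subset\tilde B(y,r/2)\cap B(y,\eta r)$, I would obtain, for every $k\in\N$,
\[
\big|\{u\ge M^{k}\}\cap B(y,r/2)\big|\le\big|\{u\ge M^{k}\}\cap\tilde B(y,r/2)\cap B(y,\eta r)\big|\le\epsilon^{k}\,\big|\tilde B(y,r/2)\cap B(y,\eta r)\big|.
\]
To replace $\tilde B(y,r/2)$ by $B(y,r/2)$ on the right I would use that the quasi-distance $\tilde d$ and the Lebesgue measure are both invariant under the reflection $S$ of \eqref{S}, so that $|B(S(y),r/2)|=|B(y,r/2)|$ (the comparability also follows directly from \eqref{homog2}, since the first coordinate of $S(y)$ has the same modulus as $y_1$); consequently $\big|\tilde B(y,r/2)\cap B(y,\eta r)\big|\le|B(y,r/2)|+|B(S(y),r/2)|\le 2\,|B(y,r/2)|$, which already yields $\big|\{u\ge M^{k}\}\cap B(y,r/2)\big|\le 2\,\epsilon^{k}\,|B(y,r/2)|$ for every $k$.

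It then remains to absorb the harmless factor $2$ by the usual relabelling of the decay constants. Choosing $k_0\in\N$ with $2\,\epsilon^{k_0}<1$, one has for every $k\ge 1$
\[
\big|\{u\ge M^{k_0 k}\}\cap B(y,r/2)\big|\le 2\,\epsilon^{k_0 k}\,|B(y,r/2)|\le\big(2\,\epsilon^{k_0}\big)^{k}\,|B(y,r/2)|,
\]
using $2\le 2^{k}$, while for $k=0$ the estimate is trivial; hence the conclusion holds with $M$ replaced by $M^{k_0}$, with $\epsilon$ replaced by $2\,\epsilon^{k_0}\in(0,1)$, and with the same $\eta$, all of which are still universal. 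I do not expect any genuine obstacle here: the whole substance is contained in Theorem~\ref{pd}, and the only points that require even a line of care are the set inclusions transferring its hypothesis and conclusion, the measure comparison between $B(y,r/2)$ and $B(S(y),r/2)$, and the elementary bookkeeping in this last step.
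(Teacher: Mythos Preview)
Your proof is correct and follows essentially the same route as the paper: reduce to Theorem~\ref{pd} via $B(y,r)\subset\tilde B(y,r)\cap B(y,\eta r)$, restrict the left-hand side to $B(y,r/2)$, bound $|\tilde B(y,r/2)\cap B(y,\eta r)|\le 2|B(y,r/2)|$, and absorb the factor $2$ by relabelling the constants. The only cosmetic difference is in the bookkeeping of the last step (the paper replaces $M$ by $M^{1+k_0}$ with $\epsilon^{k_0}<1/2$ and keeps $\epsilon$, whereas you replace $M$ by $M^{k_0}$ and $\epsilon$ by $2\epsilon^{k_0}$); both are valid.
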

\begin{proof}
In particular, $\inf_{\tilde B(y,r)\cap B(y,\eta r)} u\leq 1,$ and by Theorem \ref{pd} we get
\[
\left|\left\{
u\geq M^k
\right\}\cap  B(y,r/2)\cap B(y,\eta r)\right|\leq 2 \epsilon^k |B(y,r/2)\cap B(y,\eta r) |.
\]
Now choose a positive integer $k_0$ such that $\epsilon^{k_0}<1/2,$ and replace $M$ with $M^{1+k_0}$ to get the thesis.

\end{proof}
An alternative  proof of Corollary \ref{pdlemma} can be obtained
by applying Corollary \ref{2.5tralli}, the critical density estimate in Theorem \ref{cdB} and the results of Di Fazio et al. \cite[Theorem 4.7] {DGL}.

By Corollary \ref{pdlemma} and the results of Di Fazio et al. \cite[Theorem 5.1] {DGL}
applied to 
\[
\begin{split}
K=& \{
u\in C^2(B(y,\eta r))\cap C(\overline{B(y,\eta r)}): u\geq 0 \, \textrm{and} \, Lu\leq 0 \, \textrm{in}\, B(y,\eta r),\, u\geq 1\,  \textrm{on} \, B(y,r)
\}
\end{split}
\]
we obtain the following invariant Harnack inequality.
\begin{theorem}[Harnack inequality]\label{Har}
There exist constants $C$ and $\eta,$ both bigger than $1$ and
 depending only on the ellipticity constants, such that  for every $y\in \R^2$ and $r>0,$ if $Lu = 0$ and $u \geq 0$ in $B(y, \eta r),$ then
 \[
\sup_{B(y, r)} u\leq C \inf_{B(y, r)} u.
\]
\end{theorem}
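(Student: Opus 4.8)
The plan is to invoke the axiomatic machinery of Di Fazio, Gutierrez and Lanconelli \cite{DGL}, whose structural hypotheses have by now all been verified in our setting. First I would record the ambient structure: $(\R^2,\tilde d, dx)$ is a doubling quasi-metric measure space, since $\tilde d$ is a quasi-distance whose balls $B(y,r)$ obey the two-sided volume estimate \eqref{homog2} and the doubling property \eqref{dconstant}, while the ring condition of \cite[Definition 2.6]{DGL} is exactly Theorem \ref{ringc}. This is precisely the framework in which \cite{DGL} operates.

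Next I would fix the cone of nonnegative supersolutions
\[
K=\{\, u\in C^2(B(y,\eta r))\cap C(\overline{B(y,\eta r)}): u\ge 0,\ Lu\le 0 \text{ in } B(y,\eta r),\ u\ge 1 \text{ on } B(y,r)\,\},
\]
and check the two properties that \cite{DGL} attaches to such a cone. The $\varepsilon$-critical density property is Theorem \ref{cdB} (obtained after rescaling and passing from the sublevel sets $G(y,r)$ to the quasi-balls $B(y,r)$ via Theorem \ref{charact} and Theorem \ref{equiv}), and the double ball property is Corollary \ref{2.5tralli}. By \cite[Theorem 4.7]{DGL} these two properties yield the power decay property for $K$; in fact we have already established it directly in Corollary \ref{pdlemma}. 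With the doubling quasi-metric structure, the ring condition, and the power decay property for $K$ in hand, the scale invariant Harnack inequality follows verbatim from \cite[Theorem 5.1]{DGL} applied to $K$: there exist $C>1$ and $\eta>1$, depending only on $\lambda,\Lambda$, such that every nonnegative solution $u$ of $Lu=0$ in $B(y,\eta r)$ satisfies $\sup_{B(y,r)}u\le C\inf_{B(y,r)}u$, which is the assertion.

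The genuinely delicate point — which is the main obstacle of the whole argument and is already resolved in Sections \ref{CDE}--\ref{dbp} — is that the geometry of the sublevel sets $G(y,r)$ degenerates across the line $\{x_1=0\}$: the barrier $\rho^{\alpha}$ vanishes at the two reflected poles $(\pm y_1,y_2)$, so the critical density and double ball estimates cannot be made uniform in $y$ on the naive balls alone. The remedy, already carried out above, is to work with the symmetrized sets $\tilde B(y,r)=B(y,r)\cup B(S(y),r)$ and to exploit the reflection symmetry $S$ of \eqref{S} of the class of operators satisfying \eqref{ell}; once the critical density, double ball and power decay properties are phrased uniformly in this way, the abstract theorem of \cite{DGL} applies with no further work, and Theorem \ref{Har} is proved.
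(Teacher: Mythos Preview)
Your proposal is correct and follows essentially the same approach as the paper: once the doubling quasi-metric structure, the ring condition (Theorem \ref{ringc}), and the power decay property (Corollary \ref{pdlemma}) are in place, the Harnack inequality is obtained by applying \cite[Theorem 5.1]{DGL} to the cone $K$. Your write-up is more explicit about the structural hypotheses and the geometric obstruction near $\{x_1=0\}$, but the argument is the one the paper gives.
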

The scale invariant Harnack's inequality on balls $B_{CC}$ easy follows from Theorem \ref{Har}  and from Theorem \ref{FRLa} and Theorem \ref{charact}.

\section*{Acknowledgements} 

It is a pleasure to thank Ermanno
Lanconelli, Daniele Morbidelli and Cristian Gutierrez for several useful discussions during the preparation of this paper.

The author is member of the {\it Gruppo Nazionale per
l'Analisi Matematica, la Probabilit\`a e le loro Applicazioni} (GNAMPA)
of the {\it Istituto Nazionale di Alta Matematica} (INdAM)

\end{document}